\newtheorem{thm}{Theorem}[section]
\newtheorem{prop}[thm]{Proposition}
\newtheorem{lemma}[thm]{Lemma}
\newtheorem{cor}[thm]{Corollary}
\newtheorem{defn}[thm]{Definition}
\newtheorem{rem}[thm]{Remark}
\newtheorem*{thm-num}{Theorem} 
\newcommand{\C}{\mathbb{C}}
\newcommand{\Q}{\mathbb{Q}}
\newcommand{\Z}{\mathbb{Z}}
\newcommand{\cC}{{\mathcal{C}}}
\newcommand{\cD}{{\mathcal{D}}}
\newcommand{\cO}{\mathcal{O}}
\newcommand{\cR}{\mathcal{R}}
\newcommand{\cT}{\mathcal{T}}
\newcommand{\cW}{\mathcal{W}}
\newcommand{\gm}{\mathfrak{m}}
\newcommand{\gp}{\mathfrak{p}}
\DeclareMathOperator{\ad}{ad}
\DeclareMathOperator{\im}{im} 
\DeclareMathOperator{\Ps}{Ps} 
\DeclareMathOperator{\End}{End}
\DeclareMathOperator{\Frob}{Frob}
\DeclareMathOperator{\Ind}{Ind}
\DeclareMathOperator{\Gal}{Gal}
\DeclareMathOperator{\GL}{GL}
\DeclareMathOperator{\rH}{H}
\DeclareMathOperator{\rZ}{Z}
\DeclareMathOperator{\Hom}{Hom}
\DeclareMathOperator{\Ext}{Ext}
\DeclareMathOperator{\Spec}{Spec}
\DeclareMathOperator{\Sp}{Sp}
\DeclareMathOperator{\Tr}{Tr}
\title{Ramification of the Eigencurve at classical RM points}
\author{\underline{BETINA Adel}}
\address{Universitat Polit\`ecnica de Catalunya, Facultad de Matematicas y Estadistica, 08034 Barcelona}
\email{adelbetina@gmail.com}
\begin{document}

\maketitle

\begin{abstract}
J.Bella\"iche and M.Dimitrov have shown that the $p$-adic eigencurve is smooth but not etale over the weight space at $p$-regular theta series attached to a character of a real quadratic field $F$ in which  $p$ splits. We proof in this paper the existence of an isomorphism between the subring of the completed local ring of the eigencurve at these points fixed by the Atkin-Lehner involution and an universal ring representing a pseudo-deformation problem, and one gives also a precise criterion for which the ramification index is exactly $2$. 
We finish this paper by proving the smoothness of the nearly ordinary and ordinary Hecke algebras for Hilbert modular forms over $F$ at the cuspidal-overconvergent Eisenstein points which are the base change lift for $\GL(2)_{/F}$ of these theta series. 
\end{abstract}

\section{Introduction}

Let $p$ be a prime number and $\cC$ be the $p$-adic eigencurve of tame level $N$ constructed using the Hecke operators $U_p$ and $T_\ell,<\ell>$ for $\ell \nmid Np$. Recall that $\mathcal{C}$ is reduced and there exists a flat and locally finite morphism $\kappa: \cC \rightarrow \cW$, called the weight map, where $\mathcal{W}$ is the rigid space over $\Q_{p}$ representing homomorphisms $\Z_{p}^{\times}\times (\Z /N \Z)^{\times} \rightarrow \mathbb{G}_{m}$. The eingencurve $\cC$ was introduced by Coleman-Mazur when the tame level is one (see \cite{coleman-mazur}), and by Buzzard and Chenevier for any tame level (see \cite{buzzard} and \cite{chenevier-thesis} for more details).  

Let $\mathcal{H}$ be the ring $\Z[T_{l},U_{p}]_{\ell \nmid Np}$, then by construction of $\cC$ there exists a morphism $\mathcal{H} \rightarrow \mathcal{O}_{\cC}^{rig}(\cC)$ such that we can see the element of $\mathcal{H}$ as global sections of the sheaf $\mathcal{O}^{rig}_{\cC}$, bounded by $1$ on $\cC$; therefore, the canonicals application ''system of eigenvalues'' $\cC(\Q_{p})  \rightarrow \Hom(\mathcal{H},\Q_{p})$ is injective and induces a one to one correspondence between the set of normalised overconvergent modular eigenforms with Fourier coefficients in $\C_{p}$, of tame level $N$ and of weight $k \in \mathcal{W}(\C_{p})$, having nonzero $U_{p}$-eigenvalue and the set of $\C_{p}$-valued points of weight $k$ on the eigencurve $\cC$; moreover, since the image of $\mathcal{H}$ is relatively compact in $\mathcal{O}_{\cC}^{rig}(\cC)$, and $\mathcal{O}_{\cC}^{rig}(\cC)$ is reduced, then there exists a pseudo-character $T:G_{\Q,Np} \rightarrow  \mathcal{O}_{\cC}^{rig}(\cC)$ of dimension $2$, such that $T(Frob_{\ell})=T_{\ell}$. 

The weight map $\cC \rightarrow \mathcal{W}$ is etale at non-critical $p$-regular points corresponding to classical modular forms of weight $\geq 2$. This follows from the semi-simplicity of the action of the Hecke algebra and the fact that the multiplicity of the operator $U_{p}$ is exactly one (see \cite[\S7.6.2]{coleman-mazur}, \cite[\S1.4]{hida85} and \cite{kisin}); but this result failed for the classical points of weight one (For more details see \cite[\S1.1]{D-B}, \cite[\S7.1]{D-G} and \cite[\S7.4]{D-G}).

The locus of $\cC$ where $|U_{p}| = 1$ is open and closed in $\cC$ and is called the ordinary locus of $\cC$ and denoted by $\cC^{ord}$. The ordinary locus $\cC^{ord}$ is the generic fiber of the universal $p$-ordinary Hecke algebra of tame level $N$.

Let $f$ be a classical weight one point on $\cC^{ord}$ and denote by $\rho:G_{\Q} \rightarrow \GL_ {2} (\bar{\Q}_p)$ the associated Galois representation by Deligne-Serre \cite[Proposition \S4.1 ]{deligne-serre}, which has finite image.
We fix an algebraic closure $\bar{\Q}_p$ of $\Q_{p}$ and an embedding $\imath_{p}: \bar{\Q} \hookrightarrow \bar{\Q}_p$, which determines an embedding $G_{\Q_{p}}\hookrightarrow G_{\Q}$. Since the image of $\rho$ is finite then {\small $\rho_{|G_{\Q_{p}}}=\psi_{1} \oplus \psi_{2}$}, where $\psi_{i}:G_{\Q_{p}}\rightarrow \bar{\Q}^{\times}_p$ are character. Moreover, we say that $f$ is {\it regular} at $p$ if, and only if, $\psi_{1}\ne \psi_{2}$.

 Let $\cT$ be the completed local ring of $\cC$ at $f$ and $\varLambda$ be the completed local ring of $\cW$ at $\kappa(f)$. The weight map $\kappa$ induces a finite and flat homomorphism $\kappa^{\#}: \varLambda \rightarrow \cT$ of local reduced complete rings.

Let $\mathfrak{C}$ be the category of complete noetherian local $\bar{\Q}_p$-algebras, with residue field $\bar{\Q}_ p$ whose morphisms are local homomorphisms of complete noetherian local rings which induce the identity on their residue field and let ($\cR,\rho^{ord}$) the universal 2-tuple, representing  $p$-ordinary deformation of $\rho$ (see \cite[section \S2]{D-B}). Under the assumption that $\rho$ is $p$-regular, M.Dimitrov and J.Bellaïche showed in the paper \cite[\S5.1,\S6.1,\S6.2]{D-B}  the following results.

\begin{thm-num}[J.Bellaïche-M.Dimitrov \cite{D-B}]\
\begin{enumerate} 

\item There exists a deformation $\rho_{\mathcal{T}}:G_{\Q,Np} \rightarrow \GL_{2}(\mathcal{T})$ of $\rho$ which is ordinary at $p$ and the morphism $\kappa^{\#}: \varLambda \rightarrow \cT$ sends the universal deformation of $\det \rho$ to $\det\rho_{\mathcal {T}}$.
\item $\cR$ is a discrete valuation ring and the deformation $\rho_{\mathcal{T}}$ induces an isomorphism $\cR \simeq \mathcal {T}$.
\item The morphism $\kappa^{\#}:\varLambda \rightarrow \mathcal{T}$ is ramified, if and only if, $f$ has RM by real quadratic field in which $p$ splits.
\end{enumerate}
\end{thm-num}

Let $F$ be a quadratic real field, $\epsilon_{F}:G_{\Q}/G_{F}\rightarrow \{-1,1\}$ the non trivial character and $\sigma$ a generator of $\Gal(F/\Q)$, we say that $f$ has RM by $F$ if and only if $\rho \simeq \rho \otimes \epsilon_{F}$.  By \cite[\S3.1]{hida}, there exists a character $\phi: G_{F}\rightarrow {\bar{\Q}_p}^\times$ such that $\rho\simeq \Ind_{\Q}^{F}\phi$. The embedding $\iota_p$ single out a place $v$ of $F$ above $p$ and denote by $v^{\sigma}$ the other place. The hypothesis that $\rho$ is $p$-regular implies that $\phi_{|G_{F_v}} \ne \phi^{\sigma}_{|G_{F_v}}$. Moreover, $p$ splits in $F$, then $G_{F_v}=G_{\Q_p}$, $\phi_{|G_{F_v}}=\psi_1$ and $\phi^{\sigma}_{|G_{F_v}}=\psi_2$ ($\psi_2$ is an unramified character).

Since $\rho \simeq \rho \otimes \epsilon_F$ (i.e $\rho=\Ind_{\Q}^{F}\phi$), the map given by $\rho^{ord} \rightarrow \rho^{ord} \otimes \epsilon_F$ rises to an automorphism $\tau: \cR \rightarrow \cR$, denote by $\cR_{\tau=1}$, the sub-ring of $\cR$ fixed by $\tau$. 

In section \S \ref{pseudo-deformation}, we introduce a ring $\cR^{ps}$ representing a pseudo-deformation functor of the reducible Galois representation $\rho_{|G_F}$ to the object of $\mathfrak{C}$, with some local condition at $p$ (i.e ordinary at $v$) and with invariant trace by the action of $\sigma$ on $G_F$ (see Definition \S\ref{G}). Denote by $\cR^{ps}_{red}$ the reduced structure of $\cR^{ps}$.

\begin{thm} \label{main-thm1}
There exists an isomorphism $\cR_{\tau=1}\simeq \cR_{red}^{ps}$ and the reduced ring $\cR^{ps}_{red}$ is a discrete valuation ring.
\end{thm}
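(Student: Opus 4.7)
The plan has three main steps: constructing a canonical homomorphism $\overline{\Psi}:\cR^{ps}_{red}\to\cR_{\tau=1}$ coming from the restriction of the universal ordinary trace to $G_F$, verifying that $\cR_{\tau=1}$ is a discrete valuation ring, and proving that $\overline{\Psi}$ is an isomorphism via a tangent space computation.

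For the construction, set $T:=\Tr(\rho^{ord}|_{G_F}):G_F\to\cR$. This is a two-dimensional pseudo-character deforming $\phi+\phi^{\sigma}$. Its trace is automatically $\sigma$-invariant: for any lift $\widetilde{\sigma}\in G_\Q$ of $\sigma$ and any $g\in G_F$, $T(\widetilde{\sigma}g\widetilde{\sigma}^{-1})=T(g)$ by conjugation invariance. It is ordinary at $v$ because $G_{F_v}=G_{\Q_p}$ and $\rho^{ord}$ is $p$-ordinary. The universal property of $\cR^{ps}$ thus yields a ring homomorphism $\Psi:\cR^{ps}\to\cR$. Since $\epsilon_F|_{G_F}=1$, the involution $\tau$ fixes $T(g)$ for every $g\in G_F$, so $\Psi$ lands in $\cR_{\tau=1}$ and descends, by reducedness of $\cR$, to $\overline{\Psi}:\cR^{ps}_{red}\to\cR_{\tau=1}$. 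The subring $\cR_{\tau=1}$ is itself a DVR: $\cR$ is a DVR by \cite{D-B}, $\tau$ is an involution (because $\epsilon_F^2=1$), and $\tau$ is non-trivial, this being implicit in the Bella\"iche-Dimitrov analysis where the twist $\rho^{ord}\otimes\epsilon_F$ provides precisely the extra deformation direction responsible for the ramification index $2$ of $\kappa^{\#}$.

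To finish, I would bound the tangent space of $\cR^{ps}$. Since $\phi|_{G_{F_v}}\ne\phi^{\sigma}|_{G_{F_v}}$ by $p$-regularity, any $v$-ordinary pseudo-deformation of $\phi\oplus\phi^{\sigma}$ decomposes canonically as a sum $\phi_1+\phi_2$ of character deformations, and the $\sigma$-invariance of the trace forces the set $\{\phi_1,\phi_2\}$ to be $\sigma$-stable. This singles out two loci: a ``swap'' locus $\phi_2=\phi_1^{\sigma}$, whose tangent space is cut out inside $\rH^1(G_F,\bar{\Q}_p)$ by the $v$-ordinarity condition and is one-dimensional by an argument parallel to Bella\"iche-Dimitrov's computation for $\cR$ in \cite{D-B}; and a ``pointwise-fix'' locus $\phi_i=\phi_i^{\sigma}$, where the $\phi_i$ extend to characters of $G_\Q$, contributing only nilpotent first-order variation that vanishes in $\cR^{ps}_{red}$. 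Therefore $\dim_{\bar{\Q}_p}\gm_{\cR^{ps}_{red}}/\gm_{\cR^{ps}_{red}}^2\le 1$, so $\cR^{ps}_{red}$ is either $\bar{\Q}_p$ or a DVR; since $\overline{\Psi}$ is non-trivial (the trace at a split prime $\ell$ for which $T_\ell-\Tr\rho(\Frob_\ell)$ is a uniformizer of $\cR_{\tau=1}$ lies in its image, by $p$-regularity), it is a surjection of DVRs with matching tangent dimension, hence an isomorphism. The main difficulty is precisely this tangent space computation, where one must disentangle the two loci of pseudo-deformations and verify that the ``pointwise-fix'' component contributes no genuine first-order direction after passing to $\cR^{ps}_{red}$.
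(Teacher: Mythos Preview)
Your construction of the map $\Psi:\cR^{ps}\to\cR_{\tau=1}$ agrees with the paper's Lemma~\ref{g}, and your observation that $\cR_{\tau=1}$ is a DVR is Lemma~\ref{inv}. The problem is in your third step.

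The claim that ``any $v$-ordinary pseudo-deformation of $\phi\oplus\phi^{\sigma}$ decomposes canonically as a sum $\phi_1+\phi_2$ of character deformations'' is false, and this collapses the rest of the argument. A pseudo-deformation $\pi_A=(\tilde a,\tilde d,\tilde x)$ in the sense of Definition~\ref{pseudo} has $\tilde a(st)=\tilde a(s)\tilde a(t)+\tilde x(s,t)$; the $p$-regularity hypothesis pins down $\tilde a$ and $\tilde d$ in terms of the trace (Lemma~\ref{Pseudo-character}), but it does \emph{not} force $\tilde x=0$, so $\tilde a$ and $\tilde d$ are not characters. Indeed the paper shows (Proposition~\ref{tg} and Lemma~\ref{tg ps}) that the universal $\tilde x(g_0,h_0)=\tilde b(g_0)\tilde c(h_0)$ is a uniformizer of $\cR_{\tau=1}$, so the generic pseudo-deformation over $\cR^{ps}_{red}$ is irreducible. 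Your dichotomy into a ``swap locus'' and a ``pointwise-fix locus'' therefore does not exhaust the tangent space of $\mathfrak{G}$: the genuine tangent direction comes from the off-diagonal cocycles in $\rH^1(F,\phi/\phi^{\sigma})\times\rH^1(F,\phi^{\sigma}/\phi)$ (see Proposition~\ref{rep}(i)), not from deformations of $\phi$ and $\phi^{\sigma}$ as characters.

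The paper's route is entirely different and avoids any direct bound on the tangent space of $\cR^{ps}$. Surjectivity of $g$ is proved by exhibiting the uniformizer $\tilde x(g_0,h_0)$ in the image and applying Nakayama (Lemma~\ref{tg ps}). For injectivity on spectra, the paper takes an arbitrary prime $\gp$ of $\cR^{ps}$, builds from $\pi^{ps}\bmod\gp$ an actual representation $\rho_K$ of $G_F$ over the fraction field, uses the $\sigma$-invariance of the trace together with Proposition~\ref{ext1} and Corollary~\ref{ext2} to extend $\rho_K$ to a representation of $G_{\Q}$ over a finite extension, checks it is $p$-ordinary, and then invokes the universal property of $\cR$ to produce a map $\cR\to\mathcal A$ making the relevant diagram commute. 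This forces $\ker g\subset\gp$ for every prime $\gp$, hence $\ker g$ lies in the nilradical. The upshot is that the isomorphism is obtained by going \emph{back} to $\cR$ prime by prime, not by a tangent-space count on $\cR^{ps}$.
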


Let $H\subset \bar{\Q}$ be the number field fixed by $\ker(\ad \rho)$ and let $H_{\infty,v}$ (resp. $H_{\infty,v^{\sigma}}$) be the compositum of all $\Z_p$-extensions which are unramified outside $v$ (resp. $v^{\sigma}$), $H_{\infty}$ be the compositum of $H_{\infty,v}$ and $H_{\infty,v^{\sigma}}$, $L_{\infty}$ be the maximal unramified abelian $p$-extension of $H_{\infty}$ and $X_{\infty}$ be the Galois group $\Gal(L_{\infty}/H_{\infty})$. It is known that $\Gal(H_{\infty}/H)\simeq \Z_{p}^{2s}$ acts by conjugation on $X_{\infty}$ and that $X_{\infty}$ is a finitely generated torsion $\Z_p[[\Gal(H_{\infty}/H)]]$-module (see \cite[\S1.1]{Greenberg}).

\begin{thm}\label{main-thm}Let $F''$ be the maximal unramified extension of $H$ contained in $H_{\infty}$ and $L_0$ be the sub-field of $L_{\infty}$ such that $\Gal(L_0/H_{\infty})$ is the largest quotient of  $X_{\infty}$ on which $\Gal(H_{\infty}/\Q)$ acts via $\epsilon_F$, we have:
\begin{enumerate}

\item If $L_{0}$ is an abelian extension of $F''$ or $\Gal(L_0/H_\infty)$ is a finite group, then the ramification index $e$ of $\cC$ over $\cW$ at $f$ is exactly $2$.

\item Assume that the $p$-Hilbert class field of $H$ is trivial, the extension $H/\Q$ is biquadratic, then the ramification index $e$ of $\cC$ over $\cW$ at $f$ is $2$ if and only if $\Gal(L_0/H)$ is an abelian group.

\end{enumerate} 
\end{thm}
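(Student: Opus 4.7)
The plan is to use Theorem~\ref{main-thm1} to translate the ramification question into a Galois-cohomological/Iwasawa-theoretic one.

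\textbf{Reduction to $\Lambda\to\cR^{ps}_{red}$.} By Bella\"iche--Dimitrov $\cT\simeq\cR$ is a DVR, and by Theorem~\ref{main-thm1} the subring $\cR_{\tau=1}\simeq\cR^{ps}_{red}$ is also a DVR. The involution $\tau$ must be non-trivial on $\cR$, because the generic fiber of the component of $\cC$ through $f$ contains overconvergent forms without RM by $F$, so that $\rho^{ord}\not\simeq\rho^{ord}\otimes\epsilon_F$ as deformations over $\cR$. Hence $\cR/\cR^{ps}_{red}$ is a degree-$2$ extension of DVRs with algebraically closed residue field $\bar{\Q}_p$; any such extension is automatically totally ramified, so $e(\cR/\cR^{ps}_{red})=2$. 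By multiplicativity of ramification indices,
\[
e=2\cdot e(\cR^{ps}_{red}/\Lambda),
\]
so $e=2$ if and only if the finite map of DVRs $\Lambda\to\cR^{ps}_{red}$ is an isomorphism.

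\textbf{Tangent-space analysis.} Since both sides have residue field $\bar{\Q}_p$, this map is an isomorphism precisely when the relative cotangent space vanishes. I would compute the latter via Galois cohomology: a first-order pseudo-deformation of $\rho_{|G_F}=\phi\oplus\phi^\sigma$ with $\sigma$-invariant trace, ordinary at $v$, unramified outside $Np$, and with the determinant already prescribed by $\Lambda$, is encoded by a pair of classes in $H^1(G_F,\bar{\Q}_p(\phi/\phi^\sigma))$ and $H^1(G_F,\bar{\Q}_p(\phi^\sigma/\phi))$ exchanged by $\sigma$, subject to a Selmer condition at $v$. Selecting the $\sigma$-anti-invariant part yields a finite-dimensional subspace $\mathrm{Sel}\subseteq H^1(G_F,\bar{\Q}_p(\phi/\phi^\sigma))$ that surjects onto the relative cotangent space of $\cR^{ps}_{red}$ over $\Lambda$. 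Thus $\Lambda\simeq\cR^{ps}_{red}$ precisely when $\mathrm{Sel}=0$.

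\textbf{Class field theory; proof of (i).} Next I identify $\mathrm{Sel}$ with an Iwasawa-theoretic object. Since $\phi/\phi^\sigma$ factors through $\Gal(H/F)$ and $H_\infty/H$ exhausts the $\Z_p$-extensions of $H$ unramified outside $\{v,v^\sigma\}$, inflation--restriction in the tower $L_0/H_\infty/H/F$ identifies $\mathrm{Sel}$ with the $\epsilon_F$-isotypic component of $\Hom(\Gal(L_0/H_\infty),\bar{\Q}_p)$ for the natural $\Gal(H_\infty/\Q)$-action. If $\Gal(L_0/H_\infty)$ is finite then this Hom space vanishes after tensoring with $\bar{\Q}_p$, so $\mathrm{Sel}=0$. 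If $L_0$ is abelian over $F''$, then every class in $\mathrm{Sel}$ factors through $\Gal(F''/H)$ and the cyclotomic $\Z_p$-direction in $\Gal(H_\infty/H)$, both of which are already captured by the weight deformation (the image of $\Lambda$); so once more $\mathrm{Sel}=0$. In either case $\Lambda\simeq\cR^{ps}_{red}$ and $e=2$, proving (i).

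\textbf{Equivalence in (ii) and main obstacle.} Under the hypotheses of (ii) the translation sharpens into an equivalence. Triviality of the $p$-Hilbert class field of $H$ forces $F''=H$ and removes parasitic classes in $\mathrm{Sel}$ arising from unramified abelian $p$-extensions of $H$, while the biquadratic structure of $H/\Q$ makes the $\epsilon_F$-eigenspace of the abelianization $\Gal(L_0/H)^{ab}$ explicitly computable: it vanishes precisely when $\Gal(L_0/H)$ is abelian. Consequently $\mathrm{Sel}=0$ if and only if $\Gal(L_0/H)$ is abelian, giving (ii). The \emph{main obstacle} in executing this plan is the third step: making the class-field-theoretic identification of $\mathrm{Sel}$ with the correct $\epsilon_F$-component of $X_\infty$ fully rigorous, and in particular disentangling the contributions coming from the universal determinant (carried by $\Lambda$) from those detecting genuine non-abelian structure in the extension $L_0$. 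It is exactly this disentangling that produces the two distinct sufficient conditions in (i) and the clean equivalence in (ii).
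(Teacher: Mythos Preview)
Your reduction to showing that $\Lambda\to\cR^{ps}_{red}\simeq\cR_{\tau=1}$ is an isomorphism is correct and is exactly what the paper does. The difficulty, as you suspect, lies entirely in the tangent-space step, and here your sketch does not match what actually happens and contains a real gap.

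The relative tangent space $t'_{\cR_{\tau=1}}$ is \emph{not} cleanly described as a Selmer subgroup of $\rH^1(G_F,\phi/\phi^\sigma)$. A first-order element of $\mathfrak{G}$ is a triple $\pi_\epsilon=(\tilde a_\epsilon,\tilde d_\epsilon,\tilde x_\epsilon)$; the off-diagonal piece $\tilde x_\epsilon(g,h)$ is a genuinely two-variable object (for fixed $h$ it gives a cocycle, and likewise in $g$, cf.\ Proposition~\ref{rep}), and the diagonal pieces $\tilde a_\epsilon,\tilde d_\epsilon$ are constrained to it by the pseudo-representation relations. So reducing to a single $\sigma$-anti-invariant class in $\rH^1$ is not justified, and your claimed identification of $\mathrm{Sel}$ with the $\epsilon_F$-isotypic part of $\Hom(\Gal(L_0/H_\infty),\bar\Q_p)$ is circular: by definition $\Gal(H_\infty/\Q)$ already acts on $\Gal(L_0/H_\infty)$ through $\epsilon_F$, so that ``isotypic component'' is everything.

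What the paper actually does is quite different and uses two ingredients you are missing. First, it constructs the explicit deformation $\rho_{\tau=1}:G_F\to\GL_2(\cR_{\tau=1})$ (Lemma~\ref{eta}) to transport the ordinariness at $v$ built into $\mathfrak{G}$ to a hidden ordinariness at $v^\sigma$; this is what forces $\tilde a_\epsilon,\tilde d_\epsilon$ to be unramified above \emph{both} primes of $F$ over $p$ when restricted to $\Gal(\bar\Q/H_\infty)$ (Lemmas~\ref{d_g} and~\ref{a_g}). Second, once one knows the splitting field $N_\infty$ of $\tilde a_\epsilon\oplus\tilde d_\epsilon$ over $H_\infty$ is unramified with trivial $\Gal(H_\infty/F)$-action (hence $N_\infty\subset L_0$), the hypothesis that $L_0/F''$ is abelian is used in a very concrete way: it forces $\tilde a_\epsilon(gh)=\tilde a_\epsilon(hg)$ on $\Gal(\bar\Q/F'')$, so $\tilde x_\epsilon$ becomes \emph{symmetric}, and then the ordinariness condition $\tilde x_\epsilon(\cdot,h)=0$ for $h\in G_{F_v}$ together with the fact that $\Gal(H_\infty/F'')$ is generated by inertia kills $\tilde x_\epsilon$ entirely. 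Your sentence ``every class in $\mathrm{Sel}$ factors through $\Gal(F''/H)$ and the cyclotomic $\Z_p$-direction, both of which are already captured by $\Lambda$'' is not this argument and, as stated, is not correct.

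For part~(ii), the paper's converse is likewise constructive: assuming $\Gal(L_0/H)$ non-abelian, it exhibits an explicit nontrivial $\pi_\epsilon\in t_{\mathfrak G}$ by setting $x_\epsilon(g,g')=\epsilon\,\rho_{v^\sigma}(g)\rho_v(g')\phi^\sigma(g)\phi(g')$ for chosen cocycles $\rho_v,\rho_{v^\sigma}$ and then \emph{using the non-trivial commutator} $\beta=[\gamma_{v^\sigma},\gamma_v]\in\Gal(L_0/H_\infty)$ to define $a_\epsilon$ consistently. Your appeal to ``the $\epsilon_F$-eigenspace of $\Gal(L_0/H)^{ab}$ vanishing iff $\Gal(L_0/H)$ is abelian'' does not produce such a pseudo-deformation and is not the mechanism at work.
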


Our approach is inspired by the paper \cite{cho-vatsal} of Cho-Vatsal and build upon uses the techniques and results of the paper \cite{D-B}. More precisely, we show that the ramification index of $\cR_{\tau=1}\hookrightarrow \cR$ is two. The key observation made in section $\S3$, is that the ring $\cR_{\tau=1}$ is isomorphic to $\cR^{ps}_{red}$, and by computing its tangent space we show that it is isomorphic to $\varLambda$, from which follows that the ramification index of $\kappa$ is exactly two, since $\cR\simeq \cT$.

Let $\bar{\rho}=\Ind^{F}_\Q \bar{\phi}$ be the residual representation of $\rho$, where $\mathbb{F}_p$ is a finite field of characteristic $p$. Assume that $\phi$ is the Teichmuller lift of $\bar{\phi}$ and that $\bar{\phi}$ is an unramified character (i.e $F=\Q(\sqrt{N})$). Let $\gm$ denote the maximal ideal of the $p$-adic ordinary Hecke algebra $h_{\Q}=h_{\Q}(Np^{\infty})$ determined by the representation $\bar{\rho}$ and let $h_F=h_F(p^{\infty})$ (resp. $h^{n.ord}_F$)  be the $p$-ordinary (resp. $p$-nearly ordinary) Hecke algebra arising from cuspidal Hilbert modular forms of level $p^{\infty}$ for the field $F$. After a scalar extension we can consider that $h_{\Q}$ (resp. $h_{F}$ and $h^{n.ord}_F$) is an algebra over the ring of integers of $\bar{\Q}_p$.  

Langlands proved in \cite{L} that for any primitive holomorphic elliptic cusp forms $f_k \in S_k(\Gamma_1(N),\epsilon_F)$ of weight $k \geq 2$ and of Neben type character $\epsilon_F$, there exists a base change lift $\tilde{f}_k$ which is a Hilbert modular form for $\GL(2)_{/F}$ of weight $k$, level $1$, with a trivial Neben type character and such that $L(\tilde{f}_k,s)=L(\rho_{f_k\mid G_F},s)$, where $\rho_{f_{k}}$ is the $p$-adic Galois representation attached to $f_{k}$ (i.e $L(f_k,s)=L(\rho_{f_k},s)$). Moreover, Hida constructed in \cite[\S2]{hida98} an involution $\omega$ on $h_{\Q,\gm}$, and following the work of Langlands, Doi, Hida and Ishii in the papers \cite{L} and \cite{hida}, there exists a base-change morphism: $$\beta: h_F \rightarrow h_{\Q}$$ 

They constructed also an action of $\Delta=\Gal(F/\Q)$ on $h_F$ given by $\sigma(T_{q})= T_{q^{\sigma}}$. Let $\mathfrak{y}$ denote the inverse image of $\gm$ under this base-change map, then they conjectured under suitable assumptions that $$h_{F,\mathfrak{y}}/(\Delta-1)h_{F,\mathfrak{y}} \simeq  h_{\Q,\gm}^{\omega=1},$$ where $h^{\omega=1}_{\Q,\gm}$ is the fixed part of $h_{\Q,\gm}$ by the involution $w$ (see \cite[\S3.8]{hida}).

The restriction of $\rho$ to $G_F$ is the Galois representation associated to an overconvergent-cuspidal weight one $p$-stabilized Hilbert Eisenstein series $E_{(\phi,\phi^{\sigma})}$ of tame level $1$, and $E_{(\phi,\phi^{\sigma})}$ is associated to the height one prime ideal $\mathfrak{n}=\beta^{-1}(\gp_f)$ (resp. $\mathfrak{n}^{n.ord}$) of $h_F$ (resp. $h^{n.ord}_F$). 

Write $\mathbb{T}^{ord}$ for the completion of the localization of $h_F$ by $\mathfrak{n}$ and $\mathbb{T}^{ord}_{\Delta}$ for the reduced quotient of $\mathbb{T}^{ord}$ by the ideal of generated by elements of the form $\Delta(a)-a$.
\begin{thm}\label{base-change}
The base-change morphism $\beta$ induces an isomorphism of local rings $\beta_f: \mathbb{T}^{ord}_{\Delta} \simeq \cT_{+}$, where $\cT_{+}$ is the subring of $\cT$ fixed by $\tau$ under the identification $\cR\simeq \cT$.

\end{thm}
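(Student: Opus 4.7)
The plan is to deduce Theorem \ref{base-change} from Theorem \ref{main-thm1} together with the universal property of the pseudo-deformation ring $\cR^{ps}$. Combining the Bella\"iche--Dimitrov isomorphism $\cR \simeq \cT$ with Theorem \ref{main-thm1} yields a canonical chain
\[
\cR^{ps}_{red} \simeq \cR_{\tau=1} \simeq \cT_+,
\]
so it is enough to produce a compatible isomorphism $\mathbb{T}^{ord}_\Delta \simeq \cR^{ps}_{red}$ induced by $\beta$.

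The first step is to check that $\beta$ descends to a continuous local homomorphism $\beta_f: \mathbb{T}^{ord}_\Delta \to \cT_+$. Localising at $\mathfrak{n} = \beta^{-1}(\gp_f)$ and at $\gp_f$ respectively and then completing, the base-change morphism yields a continuous ring homomorphism $\tilde{\beta}: \mathbb{T}^{ord} \to \cT$. The factorisation through $\Delta$-coinvariants can be verified on Hecke generators: for a rational prime $q \nmid Np$ split as $q = \mathfrak{q}\mathfrak{q}^\sigma$ in $F$, one has $\beta(T_\mathfrak{q}) = \beta(T_{\mathfrak{q}^\sigma}) = T_q$, and the analogous matching at inert and ramified primes forces $\tilde{\beta} \circ (\Delta - 1) = 0$ on the whole Hecke algebra. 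That the image lies in the $\tau$-fixed subring $\cT_+$ follows from the identification of Hida's involution $\omega$ on $h_{\Q,\gm}$ with the involution $\tau$ on $\cT$ (both arise from the equivalence $\rho \otimes \epsilon_F \simeq \rho$), together with the $\omega$-invariance of every base-changed form.

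To build the inverse, I would use Hida's theory to equip $\mathbb{T}^{ord}$ with a continuous $2$-dimensional pseudo-representation $T: G_{F,Np} \to \mathbb{T}^{ord}$ deforming $\Tr(\phi \oplus \phi^\sigma)$. Since $\mathfrak{n}$ lies in the ordinary locus at $v$ and $\phi^\sigma_{|G_{F_v}}$ is unramified, $T$ satisfies the local ordinarity condition at $v$ built into the functor defining $\cR^{ps}$; reducing modulo the ideal $(\Delta(a) - a)$ forces the trace of $T$ on $G_F$ to become $\sigma$-invariant. Hence $(\mathbb{T}^{ord}_\Delta, \bar{T})$ represents an object of the pseudo-deformation functor of \S\ref{pseudo-deformation}, and universality yields a surjection $\alpha: \cR^{ps} \twoheadrightarrow \mathbb{T}^{ord}_\Delta$. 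Because the target is reduced by definition, $\alpha$ descends to a surjection $\bar{\alpha}: \cR^{ps}_{red} \twoheadrightarrow \mathbb{T}^{ord}_\Delta$.

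By construction the composition $\beta_f \circ \bar{\alpha}: \cR^{ps}_{red} \to \cT_+$ agrees with the isomorphism of Theorem \ref{main-thm1}, since both are induced by the tautological pseudo-representation; in particular it is an isomorphism. As $\cR^{ps}_{red}$ is a discrete valuation ring, $\bar{\alpha}$ and $\beta_f$ must then both be isomorphisms, proving Theorem \ref{base-change}. The main obstacle is the input to the third paragraph: one must explicitly construct the pseudo-representation $T$ on $\mathbb{T}^{ord}$ with the correct ordinarity condition at $v$, and verify that the $\Delta$-action on the Hilbert Hecke algebra corresponds precisely to the $\sigma$-twist on pseudo-characters of $G_F$. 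This is the Hilbert-side analogue of the deformation-theoretic setup of Bella\"iche--Dimitrov on the elliptic side; once in place, the remainder of the argument is a formal consequence of Theorem \ref{main-thm1}.
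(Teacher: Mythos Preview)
Your outline matches the paper's proof closely: both produce $\beta_f:\mathbb{T}^{ord}_\Delta\to\cT_+$, both invoke Hida's pseudo-character on $\mathbb{T}^{ord}$ to obtain a map $\cR^{ps}_{red}\to\mathbb{T}^{ord}_\Delta$, and both finish via Theorem~\ref{main-thm1}. The paper carries out the step you flag as the ``main obstacle'' by passing to the total quotient ring $S=\prod\mathbb{T}^{ord}_{\gp_i}$, using Wiles' result to get a representation $\rho_S$, diagonalising $\rho_S(\gamma_0)$, and then reading off the Wiles triple $(a,d,bc)$ with values in $\mathbb{T}^{ord}$; this is exactly the construction you anticipate.

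There is one genuine slip. You write ``universality yields a surjection $\alpha:\cR^{ps}\twoheadrightarrow\mathbb{T}^{ord}_\Delta$'', but universality only yields a \emph{homomorphism}. Surjectivity requires the separate observation (which the paper makes explicitly) that $\mathbb{T}^{ord}_\Delta$ is topologically generated over $\varLambda$ by the Hecke operators $T_q$ for $q\nmid p$ together with $U_v,U_{v^\sigma}$, and that all of these lie in the image of $\alpha$: indeed $\alpha(\Tr\pi^{ps}(\Frob_q))=T_q$, while $U_v,U_{v^\sigma}$ are recovered from the ordinary filtration because $\phi_{|G_{F_v}}\ne\phi^\sigma_{|G_{F_v}}$. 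This point matters for your final sentence: from $\beta_f\circ\bar\alpha$ being an isomorphism you only deduce that $\bar\alpha$ is injective and $\beta_f$ is surjective; without surjectivity of $\bar\alpha$ you cannot conclude that $\beta_f$ is injective. Once you insert the generation argument, your proof is complete and essentially identical to the paper's.
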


The Theorem \ref{base-change} allows us to use the exact same arguments that are given in the proof of Theorem \cite[B]{cho-vatsal} to deduce the following variant of the Conjecture \cite[3.8]{hida} without assuming that $(\phi/\phi^{2})_{\mid I_v} \ne 1$ as in \cite[B]{cho-vatsal}.

\begin{cor}\label{cor}  Assume that the following conditions hold for $\bar{\rho}$:  
\begin{enumerate}

\item The character $\bar{\phi}$ is everywhere unramified and $\bar{\phi}_{\mid G_{F_v}} \ne \bar{\phi}_{\mid G_{F_v}}^\sigma$. 

\item The restriction of $\bar{\rho}$ to $\Gal(\bar{\Q}/{\Q(\sqrt{(-1)^{(p-1)/2}p)}})$ is absolutely irreducible.
\end{enumerate}

Then the image of the base-change morphism $\beta: h_F \rightarrow h_{\Q,\gm}^{\omega=1}$ has a finite index.
\end{cor}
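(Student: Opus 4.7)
The plan is to imitate verbatim the argument of Cho and Vatsal in \cite{cho-vatsal}, using our Theorem \ref{base-change} as the substitute for the weight-one input that in \emph{loc.\ cit.}\ required the extra hypothesis $(\phi/\phi^{\sigma})_{\mid I_v}\ne 1$. The only place in their proof where that condition intervenes is in identifying the completed local rings of $h_F/(\Delta-1)h_F$ and $h^{\omega=1}_{\Q,\gm}$ at the weight-one RM prime; Theorem \ref{base-change} provides precisely that identification with no such local condition imposed, so the remainder of the Cho--Vatsal argument goes through unchanged under hypotheses (i) and (ii) alone.

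Concretely I would proceed in three steps. Step one: verify that $\beta$ factors through $h_F/(\Delta-1)h_F$ and lands inside $h^{\omega=1}_{\Q,\gm}$. This follows from the $\Delta$-invariance of the base-change lift together with Langlands' identity $L(\tilde f_k,s)=L(\rho_{f_k\mid G_F},s)$ and the compatibility of the Atkin--Lehner involution $\omega$ with base change, and localizing at $\mathfrak y$ and $\gm$ yields a morphism $\bar\beta : h_{F,\mathfrak y}/(\Delta-1) \to h^{\omega=1}_{\Q,\gm}$ of finite $\varLambda$-algebras. Step two: show that $\bar\beta$ becomes an isomorphism after completion at every arithmetic prime lying above a classical weight. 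For classical $k\geq 2$ this is the combination of Fujiwara's R=T theorem for Hilbert modular forms over $F$ with the classical Wiles--Taylor R=T for $\GL_2/\Q$; hypothesis (ii) supplies precisely the Taylor--Wiles condition for $\bar\rho_{\mid G_F}$, since $\Q(\sqrt{(-1)^{(p-1)/2}p})$ is the unique quadratic subfield of $\Q(\zeta_p)$. At the weight-one RM prime itself the required identification is Theorem \ref{base-change}. Step three: conclude by a Nakayama-type argument applied to the cokernel of $\bar\beta$, which is a finitely generated $\varLambda$-module that vanishes after completion at both the weight-one maximal ideal and a Zariski-dense set of classical arithmetic primes, forcing its support in $\Spec\varLambda$ to consist of at most finitely many closed points and hence to be finite as an abelian group.

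The main obstacle lies in step three: promoting the pointwise local isomorphisms to a global finite-index statement. Without the weight-one input one could only conclude that the cokernel is $\varLambda$-torsion, and it is exactly Theorem \ref{base-change} that provides the missing control at the weight-one maximal ideal and allows the corollary to be stated without the additional local hypothesis present in Cho and Vatsal.
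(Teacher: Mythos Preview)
Your first paragraph matches the paper exactly: the proof is Theorem~\ref{base-change} plus the argument of \cite[3.10]{cho-vatsal}, and the paper itself gives no more detail than that reference.

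The three-step elaboration you then offer, however, misdescribes the Cho--Vatsal argument and contains a real error. In Step two you invoke Fujiwara's $R=T$ for Hilbert modular forms over $F$, claiming that hypothesis~(ii) supplies the Taylor--Wiles condition for $\bar\rho_{\mid G_F}$. But $\bar\rho_{\mid G_F}=\bar\phi\oplus\bar\phi^{\sigma}$ is \emph{reducible}; no Taylor--Wiles or Fujiwara type theorem applies on the $G_F$-side, and hypothesis~(ii) cannot repair this. Hypothesis~(ii) is the Taylor--Wiles condition for $\bar\rho$ as a $G_\Q$-representation and is used solely to obtain $R^{ord}\simeq h_{\Q,\gm}$. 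The Cho--Vatsal mechanism on the Hilbert side runs instead through the integral pseudo-deformation ring $R^{ps}$ of $\bar\pi=(\bar\phi,\bar\phi^{\sigma},0)$; this is precisely why the paper sets up Lemma~\ref{Pseudo-character2} and the map $\alpha:R^{ps}\to h_{\Q,\gm}^{\omega=1}$ in Section~\ref{Hecke algebra}. The extra hypothesis $(\bar\phi/\bar\phi^{\sigma})^2_{\mid I_v}\ne 1$ in \cite{cho-vatsal} enters in controlling this integral $R^{ps}$, not in any completion at a weight-one height-one prime, and it is for that control that Theorem~\ref{base-change} provides a substitute.

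Your Step three is also too loose as stated: vanishing of a finitely generated $\varLambda_\cO$-module at a Zariski-dense set of height-one primes of the two-dimensional ring $\varLambda_\cO$ only yields that the module is torsion; to conclude finiteness you must still exclude support along the remaining height-one primes, in particular along $(p)$.
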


Let $\mathbb{T}^{n.ord}$ be the completion with respect to the maximal ideal of the localization of $h^{n.ord}_F$ by $\mathfrak{n}^{n.ord}$

\begin{thm}\label{q-base-change} Assume that $\phi$ is unramified everywhere and $\phi(\Frob_v)\ne \phi^{\sigma}(\Frob_v)$, then: 
\begin{enumerate}

\item The affine scheme $\Spec h^{n.ord}_F$ is smooth at the height one prime $\mathfrak{n}^{n.ord}$ corresponding to the cuspidal-overconvergent Eisenstein series $E_{(\phi,\phi^{\sigma})}$.

\item The affine scheme $\Spec h_F$ is smooth at the height one prime $\mathfrak{n}$ corresponding to the cuspidal-overconvergent Eisenstein series $E_{(\phi,\phi^{\sigma})}$, and in this case $\mathbb{T}^{ord} \simeq \mathbb{T}^{ord}_{\Delta}\simeq \cT_+$.

\end{enumerate} 

\end{thm}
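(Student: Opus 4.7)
The strategy is to combine Theorem \ref{base-change} with Theorem \ref{main-thm1} (and the identification $\cR \simeq \cT$ of \cite{D-B}) to get
$$ \mathbb{T}^{ord}_{\Delta} \simeq \cT_+ \simeq \cR_{\tau=1} \simeq \cR^{ps}_{red}, $$
from which it follows immediately that $\mathbb{T}^{ord}_{\Delta}$ is a discrete valuation ring. To upgrade this to the smoothness statement of part (ii) for $\mathbb{T}^{ord}$ itself, I would argue as follows. The Hilbert Hecke algebra $h_F$ is reduced and excellent as a finitely generated algebra over its Iwasawa algebra, so $\mathbb{T}^{ord}$ is reduced of Krull dimension $1$. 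The surjection $\mathbb{T}^{ord} \twoheadrightarrow \mathbb{T}^{ord}_{\Delta}$ onto the $1$-dimensional domain $\mathbb{T}^{ord}_{\Delta}$ has prime kernel $\mathfrak{q}$, which is necessarily a minimal prime of $\mathbb{T}^{ord}$. The content of (ii) then reduces to showing that $\mathbb{T}^{ord}$ is itself a domain --- equivalently, that the Hida family through $\mathfrak{n}$ is irreducible --- which one establishes by checking that every such family must coincide with the base-change of the unique elliptic Hida family through $f$, itself irreducible by Bella\"iche--Dimitrov. This forces $\mathfrak{q}=0$ and yields $\mathbb{T}^{ord} \simeq \mathbb{T}^{ord}_{\Delta} \simeq \cT_+$.

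For part (i), $\mathbb{T}^{n.ord}$ has Krull dimension $2$. I would introduce a pseudo-deformation ring $\cR^{ps,n.ord}$ parametrizing $\sigma$-invariant-trace pseudo-deformations of $\phi \oplus \phi^{\sigma}$ which are ordinary at $v$ and at $v^{\sigma}$ independently --- i.e.\ without requiring the Hodge--Tate weights at the two $p$-adic places to coincide --- and compute its tangent space as an adjoint Selmer group
$$ \rH^1_{\Sigma}(G_F, \ad(\phi \oplus \phi^{\sigma})) $$
with the relaxed local conditions at $v$ and $v^{\sigma}$. Using the decomposition of $\ad(\phi \oplus \phi^{\sigma})$ into four characters of $G_F$, the global Euler characteristic formula, and the hypotheses that $\phi$ is unramified and $\phi(\Frob_v) \neq \phi^{\sigma}(\Frob_v)$, one expects this Selmer group to have dimension exactly $2$ --- one more than in the ordinary case of Theorem \ref{main-thm1}, the extra class coming from relaxing the parallel-weight constraint at $v^{\sigma}$. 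The resulting surjection $\cR^{ps,n.ord}_{red} \twoheadrightarrow (\mathbb{T}^{n.ord}_{\Delta})_{red}$ then forces both sides to be regular local rings of dimension $2$, and the same domain-type argument as in (ii) yields $\mathbb{T}^{n.ord} \simeq \mathbb{T}^{n.ord}_{\Delta}$, proving (i).

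The main obstacle is the tangent-space computation for the nearly-ordinary pseudo-deformation ring: the local conditions at $v$ and $v^{\sigma}$ must be balanced carefully so that exactly one extra cohomology class appears compared to the fully-ordinary case, and controlling this requires a precise global-to-local analysis for each of the four character summands of $\ad(\phi\oplus\phi^{\sigma})$. A secondary obstacle is the verification that $\mathbb{T}^{ord}$ and $\mathbb{T}^{n.ord}$ are domains at the primes $\mathfrak{n}$ and $\mathfrak{n}^{n.ord}$, which is needed to conclude that the surjections onto the $\Delta$-coinvariants are isomorphisms; for this one must invoke a base-change control theorem for Hida families to rule out the existence of a second irreducible component passing through the cuspidal-overconvergent Eisenstein point.
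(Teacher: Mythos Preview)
Your proposal has the right spirit but diverges from the paper's approach in a way that creates a genuine gap, and it also contains a dimension error.

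First, $\mathbb{T}^{n.ord}$ has Krull dimension $3$, not $2$: the nearly ordinary Hecke algebra $h_F^{n.ord}$ for a real quadratic field is finite torsion-free over an Iwasawa algebra in three variables (Lemma 6.1 in the paper), so after localizing at a height-one prime one obtains an equidimensional ring of dimension $3$. Your Selmer computation would therefore need to produce a $3$-dimensional tangent space, not a $2$-dimensional one.

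More seriously, the paper does \emph{not} use a pseudo-deformation ring with $\sigma$-invariant trace for (i). Imposing $\sigma$-invariance would only give a surjection onto the $\Delta$-coinvariants $(\mathbb{T}^{n.ord}_{\Delta})_{red}$, and you would then be stuck with exactly the ``domain argument'' you flag as an obstacle: proving $\mathbb{T}^{n.ord}\simeq \mathbb{T}^{n.ord}_{\Delta}$ by ruling out extra components. The paper bypasses this completely by deforming, not the semisimple $\phi\oplus\phi^{\sigma}$, but the \emph{non-split} reducible representation $\tilde{\rho}=\left(\begin{smallmatrix}\phi & \eta\\ 0 & \phi^{\sigma}\end{smallmatrix}\right)$ of Lemma \ref{eta}, which has infinite image and scalar endomorphisms. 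The key technical step (Proposition \ref{generalisation} and Corollary \ref{def n.ord}) is to show that the $\mathbb{T}^{n.ord}$-module $B$ generated by the upper-right entries of $\rho_{Q(\mathbb{T}^{n.ord})}$ is free of rank one, using that $\dim \Ext^1(\phi^{\sigma},\phi)_{G_{F_{v^{\sigma}}}}=1$; this produces an honest nearly-ordinary deformation $G_F\to \GL_2(\mathbb{T}^{n.ord})$ of $\tilde{\rho}$ and hence a surjection $\cR^{n.ord}\twoheadrightarrow \mathbb{T}^{n.ord}$ directly, with no $\sigma$-invariance and no coinvariants. The tangent-space bound $\dim t_{\cD^{n.ord}}\leq 3$ (Theorem \ref{tg R^n}) is then computed via the filtration on $\ad\tilde{\rho}$, and regularity of $\mathbb{T}^{n.ord}$ follows immediately. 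Part (ii) is deduced from (i) by passing to the ordinary quotient, where $\dim t_{\cD^{ord}}\leq 1$ forces $\mathbb{T}^{ord}$ to be a DVR; the isomorphism $\mathbb{T}^{ord}\simeq \mathbb{T}^{ord}_{\Delta}\simeq \cT_+$ then follows \emph{a posteriori} from $\cR^{ord}\simeq \cR_{\tau=1}$ (Proposition \ref{summarize}(iii)), not the other way around.

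Your base-change control argument for the domain property is therefore both unnecessary and, as you yourself note, not obviously available without further input.
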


Now let $\mathfrak{F}$ (resp. $\mathfrak{F}^{ord}) $ denote any nearly ordinary (resp. cuspidal ordinary of parallel weight) $p$-adic family which specializes to the cuspidal-overconvergent Eisenstein series $E_{(\phi,\phi^{\sigma})}$ in weight one. It follows from the theorem above that $\mathfrak{F}$ (resp. $\mathfrak{F}^{ord}$)  is unique up to a Galois conjugation, since there is only one component of $\Spec h^{n.ord}_F$ (resp. $\Spec h_F$) passing through the point $\mathfrak{n}^{n.ord}$ (resp, $\mathfrak{n}$), and it is a consequence of the fact that $\mathbb{T}^{n.ord}$ and $\mathbb{T}^{ord}$ are regular rings (hence integral domains). Moreover, $\mathfrak{F}^{ord}$ is the base change lift of a $p$-ordinary Hida Family passing through $f$.

Let us now explain the main ideas behind the proof of Theorem \ref{q-base-change}. First, we construct in proposition \ref{generalisation} a $p$-nearly ordinary deformation $$\rho_{\mathbb{T}^{n.ord}}:G_F \rightarrow \GL_2(\mathbb{T}^{n.ord})$$ of a reducible representation $\tilde{\rho}$ having an infinite image and with trace $\phi + \phi^{\sigma}$ (this construction is inspired by the paper \cite{B2}). 

After, we introduce a deformation problem $\cD^{n.ord}$ with some local conditions at $p$ and such that $\cD^{n.ord}$ is representable by $\cR^{n.ord}$ and which surjects to the local ring $\mathbb{T}^{n.ord}$ of dimension $3$.  The computation of the tangent space $t_\cD^{n.ord}$ of $\cD^{n.ord}$ represents an important part of the proof and shows that $t_\cD^{n.ord}$ is of dimension $3$ (see Theorem \ref{tg R^n}). Hence, the above surjection is an isomorphism of complete local regular rings of dimension $3$. 

Finally, we deduce that the dimension of the tangent space of the sub functor $\cD^{ord}$ of $\cD^{n.ord}$ of $p$-ordinary deformations of $\tilde{\rho}$ is one, and hence the $p$-ordinary quotient $\mathbb{T}^{ord}$ of $\mathbb{T}^{n.ord}$ is a discrete valuation ring.

\subsection*{Remark}\

(i) Suppose that the residual representation of $\rho$ over a finite field satisfies the assumptions of the theorems of Taylor-Wiles \cite{taylor-wiles} and \cite{A.Wiles}, $\ad \rho$ is irreducible and $p \geq 3$, then Cho-Vatsal showed under theses additional assumptions theorem \ref{main-thm1}.

(ii) H.Darmon, A.Lauder and V.Rotger give in \cite{darmon-2} a formula of the $q$-expansion of a generalised overconvergent form $f^{\dag}$ in the generalized space associated to $f$ (which is not classical). The coefficients of the generalised eigenform $f^{\dag}$ are expressed as $p$-adic logarithms
of algebraic numbers.

(iii) S.Cho provided in \cite[\S 7]{cho} many examples when the ramification index $e$ of $\cC$ over $\cW$ at $f$ is exactly $2$. More precisely, she gives examples where $h_{\Q,\gm}^{\omega=1}$ is isomorphic to the Iwasawa algebra.

(iv) Dimitrov and Ghate provided in \cite[\S7.3]{D-G} many examples implying that $\cT$ is of rank two over $\varLambda$ and so the index $e$ is also $2$ in their examples.

\subsection*{Notation}
When $L$ is a number field and $S$ the places of $L$ above $Np$, we denote by $G_{L,S}$ the Galois group of the maximal extension of $L$ unramified except at the places belonging to $S$ and at infinite \textit{places}.

Let $\cO $ be the ring of integers of a $p$-adic field and $\mathbb{F}_p$ its residual field, $\mathrm{CNL}_\cO$ be the category of complete, local, Noetherian $\cO$-algebras with residue field $\mathbb{F}_p$ and whose morphisms are the local morphism of local rings inducing the identity on their residue fields. 

For any commutative ring $A$, write $M_A$ for the free $A$-module $A\oplus A$.

For any local ring $A$, write $\gm_A$ for the maximal ideal of $A$.

Let $\varLambda_\cO$ denote the Iwasawa algebra $\cO[[T]]$.

When $W$ is a representation of $G$ and $\{G_i\}_{i \in I}$ are subgroups of $G$, we will write: 

$\rH^{i}(G,W)_{G_i}=\ker \left( \rH^{i}(G,W) \rightarrow  \underset{i\in I}\oplus \rH^{i}(G_i,W) \right)$.

\subsection*{Acknowledgement} \thanks{{ \small The author would like to thank \text{Mladen Dimitrov} for his helpful comments which enriched my work. 
I would also like to thank Victor Rotger for many mathematical discussions.

The author has received funding from the European Research Council (ERC) under the European Union's Horizon 2020 research and innovation programme (grant agreement No 682152).
}}

\section{Preliminaries and some properties of $\cR$ and $\cR_{\tau=1}$}

Let $A$ be a ring in the category $\mathfrak{C}$ and $\rho_A :G_{\Q}\rightarrow \GL_{2}(A)$ a deformation of $\rho$, we say that $\rho_A$ is ordinary at $p$ if and only if $ (\rho_A)_{|G_{\Q_p}} \simeq \left(
\begin{smallmatrix}
\psi'_{A}&*\\
0 &\psi''_{A}\end{smallmatrix}\right) $ where $\psi''_{A}$ is an unramified character lifting $\psi_2$. We consider a deformation functor $\mathcal{D}: \mathfrak{C}\rightarrow \mathrm{SETS}$, given by strict equivalence classes of deformations of $\rho$, that are ordinary at $p$. By Schlesinger’s criteria the functor $\mathcal{D}$ is representable by ($\cR,\rho^{ord}$) (see \cite[section \S2]{D-B}) and denote by $t_{\cD}$ for its tangent space.

\subsection{Some properties of $\rho^{ord}$ and The ring $\cR_{\tau=1}$ }

The projective image of $\rho$ is dihedral and contains an order $2$  element which by a slight abuse of notation we denote it by $\sigma$. Let $(e_1, e_2)$ be a basis in which $\rho_{|G_F} =\phi \oplus {\phi}^\sigma$, by rescaling this basis one can assume that $\rho(\sigma)=\left(\begin{smallmatrix}
0&1\\
1&0\end{smallmatrix}\right)$ in $\mathrm{PGL}_{2}(\bar{\Q})$

We want to exhibit a suitable basis of $M_{\cR}$, such that the diagonal entries of the realization of $\rho^{ord}$ in this basis, depends only on the trace of $\Tr \rho^{ord}$. The existence of this basis  will be crucial for the section \S\ref{pseudo-deformation}.
\begin{lemma}\label{p-ord}
Let $\gamma_{0}$ be a fixed element of $G_{F_{v}}$, which lifts $\Frob_{\mathcal{P}}$ ($\iota_p: G_{F_v}\xrightarrow{\simeq} G_{\Q_p}$) and satisfies $\phi(\gamma_{0})\ne\phi^{\sigma}(\gamma_{0})$, then there exists a basis $\mathfrak{B}_{\cR}^{ord}$ of $M_{\cR}$, such that $\rho^{ord}(\gamma_{0})=\left(
\begin{smallmatrix}
*&0\\
0&*\end{smallmatrix}\right)$ and $\rho_{|G_{F_{v}}}^{ord}=\left(
\begin{smallmatrix}
*&*\\
0 &* \end{smallmatrix}\right)$ in this basis.
\end{lemma}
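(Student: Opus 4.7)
The plan is to start from the basis afforded by the ordinarity condition and make a unipotent upper-triangular change of basis to diagonalize $\rho^{ord}(\gamma_0)$ without destroying the triangular shape of $\rho^{ord}_{|G_{F_v}}$.

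First, since $p$ splits in $F$, the embedding $\iota_p$ identifies $G_{F_v}$ with $G_{\Q_p}$, so the definition of an ordinary deformation directly furnishes a basis $\mathfrak{B}_0$ of $M_{\cR}$ in which
$$
\rho^{ord}_{|G_{F_v}} = \begin{pmatrix} \psi'_{\cR} & * \\ 0 & \psi''_{\cR} \end{pmatrix},
$$
where $\psi''_{\cR}$ is unramified and lifts $\psi_2 = \phi^{\sigma}_{|G_{F_v}}$, while $\psi'_{\cR}$ lifts $\psi_1 = \phi_{|G_{F_v}}$. Evaluating at $\gamma_0$, one has
$$
\rho^{ord}(\gamma_0) = \begin{pmatrix} a & b \\ 0 & c \end{pmatrix}, \qquad a=\psi'_{\cR}(\gamma_0),\ c=\psi''_{\cR}(\gamma_0).
$$

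Next I would observe that $a-c$ is a unit in $\cR$: modulo the maximal ideal of $\cR$ it reduces to $\phi(\gamma_0) - \phi^{\sigma}(\gamma_0)$, which is nonzero by the $p$-regularity hypothesis on $\rho$ (i.e.\ $\psi_1\neq \psi_2$, together with the choice of $\gamma_0$). Hence $x := b/(c-a) \in \cR$ makes sense, and conjugation by
$$
P = \begin{pmatrix} 1 & x \\ 0 & 1 \end{pmatrix} \in \GL_2(\cR)
$$
produces a new basis $\mathfrak{B}^{ord}_{\cR}$ in which a direct matrix computation gives
$$
P^{-1}\rho^{ord}(\gamma_0) P = \begin{pmatrix} a & 0 \\ 0 & c \end{pmatrix}.
$$

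Finally I would note that since the set of upper-triangular matrices is stable under conjugation by elements of the upper-triangular subgroup, the change of basis by $P$ preserves the triangular shape of $\rho^{ord}_{|G_{F_v}}$. Thus $\mathfrak{B}^{ord}_{\cR}$ satisfies both requirements of the lemma. There is no serious obstacle here; the only content is recognising that $p$-regularity together with ordinarity makes the diagonal entries of $\rho^{ord}(\gamma_0)$ differ by a unit of $\cR$, which is exactly what is needed to perform the diagonalization via an upper-triangular conjugation.
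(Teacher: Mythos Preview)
Your argument is correct and, in fact, cleaner than the paper's. The paper first passes to the fraction field $K$ of $\cR$: invoking that $\cR$ is Henselian to split the characteristic polynomial of $\rho^{ord}(\gamma_0)$, it finds a basis of $M_{\cR}\otimes K$ with the desired properties, and then appeals to the fact that $\cR$ is a discrete valuation ring to descend this $K$-basis to an $\cR$-basis of $M_{\cR}$. Your approach avoids both the passage to $K$ and the use of the DVR structure: starting from the upper-triangular basis already supplied by ordinarity, you only need $a-c\in\cR^\times$, which follows from $\cR$ being \emph{local} together with $p$-regularity, and then a single unipotent upper-triangular conjugation does the job. This works over any complete local ring, whereas the paper's descent step genuinely uses that $\cR$ is a DVR; so your route is both more direct and more general.
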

\begin{proof}
Let $K$ be the field of fractions of $\cR$  ($\cR$ is DVR). Since $\cR$ is Henselian (even complete) and $\phi(\gamma) \ne \phi^{\sigma}(\gamma_0)$, there exists a basis of $M_{\cR}\otimes K$ such that $\rho^{ord}\otimes K(\gamma_{0})=\left(
\begin{smallmatrix}
*&0\\
0&*\end{smallmatrix}\right)$ and $\rho_{|G_{F_{v}}}^{ord}\otimes K=\left(
\begin{smallmatrix}
*&*\\
0 &* \end{smallmatrix}\right)$ in this basis and since $\cR$ is a discrete valuation ring, we can find a basis of $M_{\cR}$ satisfying the desired conditions.
\end{proof} 

\begin{rem} \label{ind basis} 
Since $\phi(\gamma_{0})\ne \phi^{\sigma}(\gamma_{0})$, then any other basis satisfy the same assumptions of the lemma \ref{p-ord}, are obtained by conjugating the chosen basis by a diagonal matrix. Such conjugation doesn't change $a(g), d(g)$ and the product $b(g).c(g)$, where $\rho^{ord}(g)=\left(
\begin{smallmatrix}
a(g)&b(g)\\
c(g)&d(g)\end{smallmatrix}\right)$.
\end{rem}

Let $\sigma$ be a generator of the group $\Delta=\Gal (F/\Q)$ and $\epsilon_{F}$ be a quadratic character of $\Delta$. One can see that $N\rho \otimes \epsilon_{F} N=\rho$, where $N=\left(
\begin{smallmatrix}
-1&0\\
0&1\end{smallmatrix} \right)$ in $(e_1, e_2)$.
\begin{defn}
Let $g\rightarrow \left(\begin{smallmatrix}
\tilde{a}(g)&\tilde{b}(g) \\
\tilde{c}(g)&\tilde{d}(g) \end{smallmatrix}\right)$ be the realization of $\rho^{ord}$ in a basis $\mathfrak{B}_{\cR}^{ord}$, which satisfies the assumption of lemma \ref{p-ord}. Consider the automorphism $\tilde{N}$ of $\End_{\cR}(M_{\cR})$ given by $\left(\begin{smallmatrix}
-1&0\\
0&1\end{smallmatrix}\right)$ in the basis $\mathfrak{B}_{\cR}^{ord}$, then the map $\rho^{ord} \rightarrow \tilde{N}(\rho^{ord} \otimes \epsilon_{F}) \tilde{N}$ induces an automorphism $\mathfrak{t}$ of the deformation functor $\mathcal{D}$, hence an automorphism $\tau: \cR \rightarrow \cR$, with $\tau^{2}=1$. 
\end{defn}
From the fact that $\Tr \mathfrak{t}(\rho^{ord})= \Tr(\rho^{ord} \otimes \epsilon_{F})$ and by a theorem of Nyssen \cite{nyssen} and Rouquier \cite{Rouquier}, the involution $\tau$ is independent of the choice of a basis of $M_{\cR}$, in which $\tilde{N}=\left(\begin{smallmatrix}
-1&0\\
0&1\end{smallmatrix}\right)$.

Let $A$ be a ring in the category $\mathfrak{C}$. Then a deformation $\varphi_{A}:G_{\Q,Np}\rightarrow A^{\times}$ of $\det(\rho^{ord})$ is equivalent to a continuous homomorphism $h: G_{\Q,Np} \rightarrow 1 + \mathfrak{m}_{A}$. Using the class field theory, we obtain an isomorphism $\Hom(G^{ab}_{\Q_{Np}}, 1+\mathfrak{m}_{A}) \simeq \Hom(( \mathbb{Z}/N\mathbb{Z})^{\times} \times \mathbb{Z}^{\times}_{p},1+\mathfrak{m}_{A})=\Hom(1+q\Z_p,1+\gm_A)$, where $q=p$ if $p>2$, and $q = 4$ if $p=2$.

Since $1 + \mathfrak{m}_{A}$ does not contain elements of finite order and $\varLambda \simeq \bar{\Q}_p[[1+q\Z_{p}]]$, then any deformation of $\det\rho$ to the ring $A$ is obtained via a unique morphism $\varLambda \rightarrow A$. By an abuse of notation write $\kappa^{\#}: \varLambda \rightarrow \cR$ for the morphism induced by the deformation $\det\rho^{ord}$ of $\det\rho$ (i.e we identify $\cR$ and $\mathcal{T}$).

\begin{lemma} \label{inv}\ 
\begin{enumerate}
\item The involution $\tau$ is an automorphism of $\varLambda$-algebra.
\item Let $\cR_{\tau=1}$ denote the subring of $\cR$ fixed by $\tau$, then the ring $\cR_{\tau=1}$ is an object of the category $\mathfrak{C}$ and has Krull dimension equal to one.
\item The ring $\cR_{\tau=1}$ is DVR.
\item 
Write $L$ for the field of fraction of $\cR_{\tau=1}$ and $K$ for the field of fraction of $\cR$, then $L$ is equal to the set of elements of $K$ fixed by $\tau$.
\item 
The involution $\tau :\cR \rightarrow \cR$ is not trivial and the injection $\iota:\cR_{\tau=1} \rightarrow \cR$ has degrees of ramification equal to 2.
\end{enumerate}
\end{lemma}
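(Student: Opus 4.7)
The plan is to treat parts (i)--(iv) as formal consequences of $\tau$ being an order-two $\bar{\Q}_p$-algebra automorphism of the DVR $\cR$, and to concentrate effort on (v).

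For (i), I would compute $\det(\tilde{N}(\rho^{ord}\otimes \epsilon_F)\tilde{N})=\epsilon_F^{2}\det\rho^{ord}=\det\rho^{ord}$, so $\tau$ fixes $\det\rho^{ord}$; since $\kappa^{\#}$ is determined by the universal determinant of $\rho^{ord}$, it follows that $\tau$ restricts to the identity on $\kappa^{\#}(\varLambda)$. For (ii)--(iii), every $x\in\cR$ is a root of $(X-x)(X-\tau(x))\in\cR_{\tau=1}[X]$, so $\cR$ is integral over $\cR_{\tau=1}$ and finitely generated as a module; Eakin--Nagata then yields $\cR_{\tau=1}$ noetherian, while completeness, locality and residue field are inherited from $\cR$, placing $\cR_{\tau=1}\in\mathfrak{C}$; integrality gives $\dim\cR_{\tau=1}=\dim\cR=1$; and since the ring of invariants of a finite group acting on a normal domain is normal, $\cR_{\tau=1}$ is a one-dimensional normal local noetherian domain, hence a DVR. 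For (iv), I clear denominators by $\tau$-conjugation: any $x=a/b\in K$ equals $a\tau(b)/(b\tau(b))$ with $b\tau(b)\in\cR_{\tau=1}$; if moreover $\tau(x)=x$ then $\tau(a\tau(b))=a\tau(b)$, so the numerator also lies in $\cR_{\tau=1}$ and hence $x\in L$.

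The heart of the matter is showing $\tau\ne \mathrm{id}$ in (v). Since $\cR\simeq \bar{\Q}_p[[\pi]]$, a short power-series calculation shows that any order-two $\bar{\Q}_p$-algebra automorphism $\pi\mapsto f(\pi)$ of $\cR$ must satisfy $f'(0)=\pm 1$, and that $f'(0)=1$ forces $f(\pi)=\pi$ in characteristic zero; in particular $\tau=\mathrm{id}$ if and only if $\tau$ acts trivially on the one-dimensional tangent space $t_{\cD}$. By the Bella\"iche--Dimitrov theorem stated in the introduction, $\kappa^{\#}$ is ramified, so the tangent map induced by $\det\rho^{ord}$ vanishes on $t_{\cD}$, and hence $t_{\cD}\subset \rH^{1}_{ord}(G_{\Q,Np},\ad^{0}\rho)$. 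Using $\rho=\Ind_{\Q}^{F}\phi$, Mackey decomposes
\[
\ad^{0}\rho \simeq \epsilon_F \oplus \Ind_{\Q}^{F}(\phi^{\sigma}/\phi),
\]
and conjugation by $\tilde{N}=\mathrm{diag}(-1,1)$ preserves the trace-zero diagonal and negates the off-diagonal, so $\tau$ acts as $+1$ on the $\epsilon_F$-summand and as $-1$ on the induced summand. Combined with the vanishing of $\rH^{1}_{ord}(G_{\Q,Np},\epsilon_F)$ (a consequence of the $p$-regularity of $\rho$ together with the unramifiedness of $\psi_{2}$), this forces $t_{\cD}$ to lie entirely in the $(-1)$-eigenspace of $\tau$; therefore $\tau$ acts as $-1$ on $t_{\cD}$ and $\tau\ne\mathrm{id}$.

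The ramification assertion in (v) then drops out: by (iv) applied to the now nontrivial $\tau$, $K/L$ is Galois of degree two; because $\cR$ is a DVR with fraction field $K$, it is the integral closure of $\cR_{\tau=1}$ in $K$, whence $[\cR:\cR_{\tau=1}]=2$ as a module; and since both rings are DVRs with residue field $\bar{\Q}_p$, the identity $ef=2$ with $f=1$ yields $e=2$. The main obstacle is the $\tau$-equivariant control of $t_{\cD}$ in (v): identifying the Mackey decomposition of $\ad^{0}\rho$ and, crucially, showing that the $(+1)$-eigenspace contributes nothing to $t_{\cD}$ under the ordinary condition requires going back to the detailed Galois-cohomological computation of Bella\"iche--Dimitrov, which is the genuinely content-bearing input behind the lemma.
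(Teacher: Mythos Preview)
Your proof is correct, but part (v) takes a genuinely different route from the paper.

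For (i)--(iv) you argue purely from the fact that $\tau$ is an order-two automorphism of a DVR in characteristic zero (integrality via the characteristic polynomial, Eakin--Nagata, normality of invariant rings, clearing denominators by $\tau$-conjugation). The paper instead leans on the finiteness of $\kappa^{\#}:\varLambda\to\cT\simeq\cR$ to place $\cR_{\tau=1}$ between $\varLambda$ and $\cR$ as a finite $\varLambda$-module, and for (iv) uses that $\cR$ is a valuation ring (so $a\in\cR$ or $a^{-1}\in\cR$). Both are routine; your approach is slightly more self-contained in that it does not invoke $\cR\simeq\cT$, though you should be a bit more careful that $\cR$ is module-finite over $\cR_{\tau=1}$ before applying Eakin--Nagata (e.g.\ via the splitting $\cR=\cR_{\tau=1}\oplus\cR_{\tau=-1}$ coming from the idempotent $(1+\tau)/2$, or simply because $\pi$ satisfies a monic quadratic over $\cR_{\tau=1}$ and topologically generates $\cR$).

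The real divergence is in (v). The paper argues by contradiction on the \emph{automorphic} side: if $\tau=\mathrm{id}$ then $\rho^{ord}\simeq\rho^{ord}\otimes\epsilon_F$, hence $\rho^{ord}\simeq\Ind_\Q^F\phi^{ord}$ for some character $\phi^{ord}$; since $\rho^{ord}$ is the Galois representation attached to the Hida family through $f$, every weight-$k$ specialisation with $k\ge 2$ would then be a classical RM form of weight $\ge 2$, which do not exist. Your argument stays on the \emph{Galois-cohomological} side: you reduce $\tau=\mathrm{id}$ to $\tau$ acting trivially on $t_\cD$ (a clean power-series lemma), compute that conjugation by $\tilde N$ acts as $+1$ on the diagonal $\epsilon_F$-summand and $-1$ on the anti-diagonal $\Ind_\Q^F(\phi^\sigma/\phi)$-summand of $\ad^0\rho$, and then use the Bella\"iche--Dimitrov description of $t_\cD$ (which is exactly Proposition~\ref{tg} in this paper) to see that $t_\cD$ sits entirely in the anti-diagonal summand. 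The paper's route is shorter and uses only the coarse fact that RM forms of higher weight do not exist; yours gives more, namely the precise statement that $\tau$ acts as $-1$ on $t_\cD$, at the cost of invoking the full tangent-space computation from \cite{D-B}. Note that your parenthetical justification for the vanishing of the $\epsilon_F$-contribution (``$p$-regularity together with unramifiedness of $\psi_2$'') is not quite the right attribution: what you actually need is the identification $t_\cD\simeq\rH^1(F,\phi^\sigma/\phi)_{G_{F_v}}$ with $a=d=0$, which is \cite[Lemma~4.2]{D-B} as used in Proposition~\ref{tg}, and which you correctly flag at the end as the substantive input.
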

\begin{proof}

(i)Since $\det (\rho^{ord})=\det (\tilde{N} \rho^{ord} \otimes \epsilon_F \tilde{N})$, $\tau \circ \kappa^{\#}=\kappa^{\#}$.

(ii)Since $\kappa^{\#}:\varLambda \rightarrow \cT$ is finite morphism and $\cR \simeq \cT$, $\cR_ {\tau=1}$ is finite over $\varLambda$. The fact that $\varLambda$ is Henselian ring of dimension one (even complete) implies that $\cR_{\tau=1}$ is a finite product of local rings and his Krull dimension is one. However, the ring $\cR_{\tau=1}$ is a domain ($\cR_{\tau=1} \subset \cR$), so $\cR_{\tau=1}$ is complete local ring of dimension one. 

(iii)Since $\cR_{\tau=1}$ is local domain, Noetherian and has Krull dimension equal to one, it is sufficient to show that it is integrally closed. Let $\alpha$ be any element of the the fraction field of $\cR_{\tau=1}$ such that $\alpha$ is integral over $\cR_{\tau=1}$; write $\alpha=x/y$, where $x \in \cR_{\tau=1}$ and $y\in \cR_{\tau=1}-\{0\}$. Since $\cR_{\tau=1}$ is a subring of $\cR$, $\alpha$ is integral over $\cR$. It follows that $\alpha \in \cR$ since the ring $\cR$ is DVR. However, $\tau(\alpha)=\tau(x)/\tau(y)=x/y=\alpha$, so $\tau(\alpha)=\alpha$, hence $\alpha \in \cR_{\tau=1}$.

(iv)Let $a \in K$ and assume that $\tau(a)=a$. Since $\cR$ is a valuation ring, $a\in \cR$ or $a^{-1} \in \cR$, so $a \in \cR_{\tau=1}$ or $a^{-1} \in \cR_{\tau=1}$, hence $a\in L$.

(v)Assume that $\tau$ is trivial, then $\rho^{ord}\simeq \rho^{ord}\otimes \epsilon_{F}$. According to proposition \cite[\S3.1]{hida}, $\rho^{ord}\simeq \Ind_{\Q}^{F}\phi^{ord} \text {, where } \phi^{ord}: G_{F}\rightarrow \cR^{\times} \text { a character}$. Since $\cR \simeq \mathcal{T}$, $\rho^{ord}$ is a representation associated to a primitive Hida family containing $f$. Thus, $\rho^{ord}$ is dihedral RM, so any specialization to weight $k \geq 2$ is a classical modular form of weight $k \geq 2$ which has RM by $F$. However, as is well known, there is no RM modular forms of weight $\geq 2$, so we have a contradiction. Therefore, $\tau$ is not trivial. Since $K=L^{\tau=1}$ and $\tau^{2}=1$, $L/K$ is an extension of degrees two.
\end{proof}

Denote by $\nu_{\cR}$ the valuation of $\cR$, $H$ the splitting field of $\ad \rho$ and $w_{0}$ the place of $H$ over $p$ single out by $\iota_p$. In lemma \ref{tg ps}, we need to exhibit a generator of $\gm_{\cR_{\tau=1}}$ from the entries of the matrix of $\rho^{ord}$ and in lemma \ref{eta}, we need to estimate the lower bound of the set $\nu_{\cR}(\tilde{b}(G_{H_{w_{0}^\sigma}}))$, and that is the purpose of the following proposition.

\begin{prop}\label{tg}

Let $g\rightarrow \left(
\begin{smallmatrix}
\tilde{a}(g)&\tilde{b}(g)\\
\tilde{c}(g)&\tilde{d}(g)\end{smallmatrix}\right)$ be the realization of the universal deformation $\rho^{ord}$ in the basis $\mathfrak{B}_{\cR}^{ord}$, which lifts $(e_{1},e_{2})$, then:
\begin{enumerate}
\item
There exists two elements $g_{0},h_{0} \in G_{F}$ such that the order of both $\tilde{b}(g_{0})$ and $\tilde{c}(h_{0})$ in $\cR$ is one, and If $w^{\sigma}_{0}$ is the place of $H$ above $v^{\sigma}$ induced by the embedding $\iota_p$, the image of $G_{H_{w_0^{\sigma}}}$ by $\tilde{b}$ is contained in $\gm_\cR^2$.
\item 

One always has $\dim_{\bar{\Q}_p} \rH^{1}(F, \phi^{\sigma}/ \phi)_{G_{F_v}}=1$.
\end{enumerate}
\end{prop}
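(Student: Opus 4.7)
The plan is to reduce both statements to the description of the tangent space $t_{\cD} = t_\cR$ of the ordinary deformation functor, using the decomposition of $\ad\rho$ under $G_\Q$ and the involution $\tau$. First I would observe that for $g \in G_F$ the entries $\tilde b(g)$ and $\tilde c(g)$ lie in $\gm_\cR$, since $\rho_{|G_F}\simeq \phi \oplus \phi^\sigma$ is diagonal in the basis lifting $(e_1,e_2)$. Writing $b_1(g), c_1(g) \in \gm_\cR/\gm_\cR^2 \simeq \bar{\Q}_p$ for the first-order parts of $\tilde b, \tilde c$, the multiplicativity of $\rho^{ord}$ reduced modulo $\gm_\cR^2$ shows that $\beta(g) := b_1(g)/\phi^\sigma(g)$ represents a class $[\beta] \in \rH^1(F, \phi/\phi^\sigma)$ and $\gamma(g) := c_1(g)/\phi(g)$ a class $[\gamma] \in \rH^1(F, \phi^\sigma/\phi)$; these are the off-diagonal components of the first-order deformation inside $\rH^1(\Q, \ad\rho)$.

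Next I would use the decomposition $\ad\rho = \mathbf{1} \oplus \epsilon_F \oplus \Ind_F^\Q(\phi/\phi^\sigma)$ as $G_\Q$-modules and compute that $\tau$, being induced by conjugation by $\tilde N = \mathrm{diag}(-1,1)$, acts as $+1$ on the diagonal pieces $\mathbf{1}$ and $\epsilon_F$ and as $-1$ on the off-diagonal piece $\Ind_F^\Q(\phi/\phi^\sigma)$. Lemma \ref{inv}(v) gives that $\tau$ is nontrivial and that $\cR_{\tau=1} \hookrightarrow \cR$ has ramification index $2$; by a standard Hensel-iteration argument a nontrivial involution of a characteristic-zero DVR cannot act trivially on the cotangent space, so $\tau$ acts as $-1$ on the one-dimensional $t_\cR$. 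Hence $t_\cR$ sits in the $-1$-eigenspace of $\tau$ on $t_\cD$; combined with Shapiro's lemma $\rH^1(\Q, \Ind_F^\Q(\phi/\phi^\sigma)) \simeq \rH^1(F, \phi/\phi^\sigma)$, the ordinary condition $\tilde c_{|G_{F_v}}=0$ translates (using $G_{F_{v^\sigma}} = \tilde\sigma G_{F_v}\tilde\sigma^{-1}$) into the vanishing condition at $v^\sigma$, yielding an isomorphism
\[
t_\cR \;\simeq\; \rH^1(F, \phi/\phi^\sigma)_{G_{F_{v^\sigma}}}.
\]

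From this, both parts fall out. For (i), since $t_\cR$ is nonzero the class $[\beta]$ is nontrivial; because $\phi \ne \phi^\sigma$ no nonzero coboundary for $\phi/\phi^\sigma$ vanishes identically, so $\beta \not\equiv 0$ as a function on $G_F$, producing $g_0$ with $\tilde b(g_0) \notin \gm_\cR^2$, and applying the $\sigma$-involution yields the analogous $h_0$ for $\gamma$. For the containment $\tilde b(G_{H_{w_0^\sigma}}) \subset \gm_\cR^2$, the identification above says $\beta$ vanishes on all of $G_{F_{v^\sigma}}$, and since $G_{H_{w_0^\sigma}} \subset G_{F_{v^\sigma}}$ we deduce $\tilde b(g) \in \gm_\cR^2$ for every $g \in G_{H_{w_0^\sigma}}$. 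For (ii), $\sigma$-conjugation interchanges $\phi/\phi^\sigma \leftrightarrow \phi^\sigma/\phi$ and $v \leftrightarrow v^\sigma$, so $\rH^1(F, \phi/\phi^\sigma)_{G_{F_{v^\sigma}}} \simeq \rH^1(F, \phi^\sigma/\phi)_{G_{F_v}}$, and both equal $\dim t_\cR = 1$.

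The main obstacle is checking that no extra local conditions intrude in the Selmer-type description of $t_\cR$. Concretely, the first-order ordinary condition also constrains the diagonal entries (the lower diagonal character $\tilde d$ must remain unramified on $I_{\Q_p}$, and the trace is tied to the $\varLambda$-direction from $\det$), but these conditions live on the $\mathbf{1} \oplus \epsilon_F$-summand, which is in the $+1$-eigenspace of $\tau$ and therefore disjoint from $t_\cR$; verifying this carefully so that only the off-diagonal condition "$\beta_{|G_{F_{v^\sigma}}} = 0$" survives is the delicate bookkeeping step on which the whole identification rests.
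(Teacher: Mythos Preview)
Your approach is correct and genuinely different from the paper's. The paper establishes $a=d=0$ for every element of $t_{\cD}$ by directly citing the cohomological computation \cite[\S4.2]{D-B}, which shows that the diagonal contribution $\rH^1(\Q,\mathbf{1}\oplus\epsilon_F)$ is killed by the local conditions; it then reads off $t_{\cD}\simeq\rH^1(F,\phi^\sigma/\phi)_{G_{F_v}}$ and invokes $\dim t_{\cD}=1$ from \cite[Theorem~2.2]{D-B}. You instead bypass \cite[\S4.2]{D-B} by exploiting the involution: Lemma~\ref{inv}(v) gives $\tau\neq\mathrm{id}$, and your observation that a nontrivial $\bar\Q_p$-algebra involution of a DVR must act as $-1$ on the one-dimensional cotangent space forces $t_{\cR}$ into the $(-1)$-eigenspace of $\tau$, which you identify with $\rH^1(\Q,\Ind_F^\Q(\phi/\phi^\sigma))$ via the explicit action of $\tilde N$. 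The payoff is that you never need the separate vanishing result for the diagonal Selmer piece; the cost is that you must invoke Lemma~\ref{inv}(v), whose proof uses the identification $\cR\simeq\cT$ and the non-existence of RM forms in weight $\geq 2$, so your argument is less purely Galois-theoretic than the paper's.

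One small inaccuracy to fix: you write that ``$\beta$ vanishes on all of $G_{F_{v^\sigma}}$'', but your identification only gives that the \emph{class} $[\beta]$ restricts to zero in $\rH^1(F_{v^\sigma},\phi/\phi^\sigma)$, i.e.\ $\beta|_{G_{F_{v^\sigma}}}$ is a coboundary $g\mapsto(\phi/\phi^\sigma(g)-1)\lambda$. This is enough, since $H=\ker(\ad\rho)$ makes $\phi/\phi^\sigma$ trivial on $G_{H_{w_0^\sigma}}$, so the coboundary dies there and $\tilde b(G_{H_{w_0^\sigma}})\subset\gm_\cR^2$ follows; just phrase it that way.
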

\begin{proof}

(i)Let $\left(
\begin{smallmatrix}
a&b\\
c &d\end{smallmatrix}\right)$ be an element of $\ad\rho$, by proposition \cite[\S2.3]{D-B} and the fact that $G_{\Q_{p}}=G_{F_{v}}$, the restriction of $c$  (resp. $d$) to $G_{\Q_{p}}$ (resp. to $I_{p}$) rises to the following isomorphism:

$$t_{\mathcal{D}}=\ker \left(\rH^{1}(G_{\Q}, \ad \rho)\rightarrow \rH^{1}(G_{\Q_{p}}, \phi/\phi^{\sigma}) \oplus \rH^{1}(I_{p}, \bar{\Q}_p)\right) \hspace{1cm}(1)$$

We have the following decomposition of $\ad \rho$: 
\\$\ad\rho\simeq 1 \oplus \epsilon_{F} \oplus \Ind^{F}_{\Q}(\phi/\phi^{\sigma})$, given by ${\small \left(\begin{smallmatrix}
a&b\\
c&d\end{smallmatrix}\right)=\left(
\begin{smallmatrix}
a&0\\
0&d\end{smallmatrix}\right)+ 
\left(\begin{smallmatrix}
0&b\\
c&0\end{smallmatrix}\right)}$, inducing the following decomposition: 
\\${\small \rH^{1}(G_{F}, \ad\rho)\simeq \rH^{1}(G_{F}, \phi/\phi)\oplus \rH^{1}(G_{F}, \phi/\phi^{\sigma})\oplus \rH^{1}(G_{F}, \phi^{\sigma}/\phi)\oplus \rH^{1}(G_{F}, \phi^\sigma/\phi^\sigma)}$ \hspace{0.3cm} $(2)$

given by $\left(\begin{smallmatrix} a&b\\ c&d\end{smallmatrix}\right) \rightarrow (a,b,c,d)$, where the action of $\sigma \in \Gal(F/\Q)$ exchanges $a$, $d$ and $b$, $c$ (i.e $b=c^{\sigma}$, $d=a^{\sigma}$). By applying restriction-inflation exact sequence to the isomorphism $(1)$, the relation $(2)$ and \cite[\S4.2]{D-B}, we deduce that  
$\left(\begin{smallmatrix}a & b \\ c & d \end{smallmatrix}\right)\in  t_{\cD}$ if, and only if, $a=d=0$, $b=c^\sigma$, and $c\in {\small \rH^1(F,\phi^{\sigma}/\phi)_{G_{F_v}}}$. According to \cite[Theorem \S2.2]{D-B}, dim $t_{\mathcal{D}}=1$, so $c$ is not trivial and the same for $b$, since $b=c^{\sigma}$. 

Furthermore, $c_{|G_F}=b_{|G_F}^{\sigma}$, so $b_{|G_F}\in \rH^1(F,\phi /\phi^{\sigma})_{G_{F_{v^{\sigma}}}}$. The restriction-inflation exact sequence yields $b_{|G_{H}} \in \rH^1(H,\bar{\Q}_p)^{\Gal(H/F)}_{G_{H_{w_{0}^{\sigma}}}}$.

Let $\rho_{\epsilon}$ be the deformation of $\rho$ induced by the composition of the canonical projection $\cR \twoheadrightarrow \cR/\gm^{2}_{\cR}$ and $\rho^{ord}$. Since the dimension of $t_\cD$ is one, $\cR$ is DVR (see \cite[\S6.2]{D-B}) and $\cR/\gm^{2}_{\cR}$ is isomorphic to the dual numbers, so $\rho_{\epsilon} \in \mathcal{D}(\bar{\Q}_p[\epsilon])$. 

Therefore, $\rho_{\epsilon}(g)=(1+\epsilon \rho_{1}(g))\rho(g)$, where $\rho_{1}=\left(\begin{smallmatrix}
a&b\\
c&d\end{smallmatrix}\right)$ is a generator of $t_{\mathcal{D}}$. Let $g\rightarrow \left(\begin{smallmatrix}
a'(g)&b'(g)\\
c'(g)&d'(g)\end{smallmatrix}\right)$ be the realization of $\rho_\epsilon$ by a matrix. Since $\rho_{|G_F}$ is diagonal, $b\ne 0$, $c \ne 0$ and $b_{|G_{H_{w_{0}^{\sigma}}}}=0$, then $b'\ne 0$, $c'\ne 0$ and { \small $b'_{|G_{H_{w_0^\sigma}}}=0$}, hence $\tilde{b} \ne 0$, $\tilde{c} \ne 0$ modulo $\mathfrak{m}_{\cR}^{2}$. Moreover, $G_{H}= \ker(\ad\rho)$, so { \small$\tilde{b}_{|G_{H_{w_{0}^{\sigma}}}}=0$} modulo $\gm_{\cR}^{2}$. 

(ii)It follows immediately from the isomorphism $t_{\mathcal{D}} \simeq \rH^1(F,\phi^{\sigma}/\phi)_{G_{F_v}}$ and \cite[Theorem \S2.2]{D-B} (i.e $\dim_{\bar{\Q}_p} t_{\mathcal{D}}=1$).
\end{proof}

\subsection{Criterion to extend a $G_{F}$-representation to $G_{\Q}$.}

In this subsection, we give a sufficient condition for extending a representation { \small $\rho_{K}:G_ {F} \rightarrow \GL_{2}(K)$} to all $G_{\Q}$, this condition will be crucial in the proof of theorem \ref{main-thm1}.
\begin{defn}

Let $K$ be a ring and $\rho_ {K}:G_{F}\rightarrow \GL_ {n}(K)$ be a representation. Write $\rho_{K}^{t}(g)$ for $\rho_{K}(t g t^ {-1})$, where $t$ is an element of $G_{\Q}$ non trivial on $\Delta$. 

Assume the following condition: $$(C) \hspace{0.5cm}  \text{ For each } t \in G_{\Q} \text{, there exists }r(t)\in GL_{n}(K) \text{ such that }   \rho_{K}=r(t)^{-1} \rho_{K}^{t} r(t).$$
\end{defn}
\begin{prop} \label{ext1}
Let $\rho_{K}: G_{F} \rightarrow \GL_{n}(K)$ be a representation, where $K$ is a ring. Assume that the only matrices in $M_{n}(K)$ that commute with the image of $\rho_{K}$ are the scalar matrices, and $\rho_{K}$ satisfies the condition $(C)$. Then:
\begin{enumerate}
\item 
Writing $G_\Q= G_{F} \sqcup G_F.t$ for a fixed $t \in G_\Q$, so we can choose $r$ such that the following conditions are satisfied : $\forall h\in G_F$, $r(h t)=\rho_{K}(h)r(t)$ and $r(h)=\rho_{K}(h)$.  
\item 
The function $\varrho :G_{\Q}\times G_{\Q} \rightarrow K^{\times}$ defines by $\varrho(t',t)=r(t')r(t)r^{-1}(t' t)$ is an element of $\rH^{2}(G_{\Q}, K^{\times})$ for the trivial action of $G_{\Q}$. Moverover, $\varrho$ factors through $\Delta$.
\item 
If the cohomology class of $\varrho \in \rH^2(\Delta, K^{\times})$ vanishes, then there exists a representation $r: G_\Q \longrightarrow \GL_{n} (K)$ extending $\rho_K$, and if $r'$ is another extension of $\rho_ {K} $, then $r'=r \otimes \epsilon_{F}$.
\end{enumerate}
\end{prop}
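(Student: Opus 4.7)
The plan is to establish the three parts in order, all relying on the hypothesis that the centralizer of $\rho_K(G_F)$ in $M_n(K)$ consists only of scalars, combined with condition $(C)$.

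For part (i), I would first rewrite condition $(C)$ as the intertwining relation $\rho_K^t = r(t)\rho_K r(t)^{-1}$. Specialising to $t = h \in G_F$, the conjugate $\rho_K^h$ equals $\rho_K(h)\rho_K \rho_K(h)^{-1}$, so $\rho_K(h)^{-1}r(h)$ commutes with $\rho_K(G_F)$ and is scalar; rescaling lets me take $r(h) = \rho_K(h)$. For an element $ht$ with $h \in G_F$, the same reasoning shows that $\rho_K(h)^{-1} r(ht)$ satisfies the intertwining property of $r(t)$, so it differs from $r(t)$ by a scalar, and I rescale to enforce $r(ht) = \rho_K(h) r(t)$.

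For part (ii), I would compute $\rho_K^{t' t}$ in two ways (via $r(t' t)$ and via $r(t')r(t)$); this shows that $r(t' t)^{-1} r(t') r(t)$ commutes with $\rho_K(G_F)$, hence is a scalar $\varrho(t',t) \in K^\times$. The associativity relation $[r(t'')r(t')]r(t) = r(t'')[r(t')r(t)]$ translates directly into the 2-cocycle identity for $\varrho$ with trivial action. To prove that $\varrho$ factors through $\Delta = \Gal(F/\Q)$, I would verify $\varrho(h, t) = 1 = \varrho(t, h)$ for every $h \in G_F$ and $t \in G_\Q$: the first identity is immediate from the normalization $r(ht) = \rho_K(h) r(t)$, whereas the second uses the decomposition $th = (t h t^{-1}) t$ with $t h t^{-1} \in G_F$ (normality of $G_F$ in $G_\Q$), the same normalization applied to $r((t h t^{-1}) t)$, and the intertwining $r(t)\rho_K(h)r(t)^{-1} = \rho_K(t h t^{-1})$.

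For part (iii), if $[\varrho] = 0$ in $\rH^2(\Delta, K^\times)$ I would write $\varrho(\bar{t}', \bar{t}) = \mu(\bar{t}')\mu(\bar{t})\mu(\bar{t}'\bar{t})^{-1}$ for some $\mu \colon \Delta \to K^\times$ (automatically with $\mu(1) = 1$, since $\varrho$ is normalised by $\varrho(1,1)= r(1) = I$), and set $\widetilde{r}(t) = \mu(\bar t)^{-1} r(t)$. The coboundary identity converts into the multiplicativity $\widetilde{r}(t')\widetilde{r}(t) = \widetilde{r}(t' t)$, so $\widetilde{r}$ is a homomorphism; it extends $\rho_K$ because $\bar h = 1$ in $\Delta$ for every $h \in G_F$. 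For uniqueness, if $r'$ is another extension then both $r(t)$ and $r'(t)$ intertwine $\rho_K$ with $\rho_K^t$, so their ratio $r'(t)r(t)^{-1}$ is a scalar and defines a character $\chi \colon G_\Q \to K^\times$ trivial on $G_F$; since $G_\Q/G_F = \Delta$ has order two, $\chi$ is either trivial or equals $\epsilon_F$, yielding $r' = r$ or $r' = r \otimes \epsilon_F$. The main technical step will be the bookkeeping in part (ii) that shows $\varrho$ descends to $\Delta$; once that is established, parts (i) and (iii) follow essentially formally from the scalar commutant hypothesis combined with standard group-cohomological manipulations.
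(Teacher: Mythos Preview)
Your argument is correct and follows the standard route for this type of extension problem: normalise $r$ using the scalar-commutant hypothesis, verify that the failure of multiplicativity is a scalar $2$-cocycle that descends to $\Delta$, and then kill it by a coboundary. The paper does not give a proof of its own here; it simply refers to \cite[A~1.1]{Hida}, where essentially the same computation appears. Your write-up is thus a faithful unpacking of that reference.

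One small remark on part~(ii): you check directly that $\varrho(h,t)=\varrho(t,h)=1$ for $h\in G_F$, which is exactly what is needed, but you might make explicit that these two vanishings, combined with the $2$-cocycle relation, force $\varrho(t',t)$ to depend only on the images of $t',t$ in $\Delta$ (e.g.\ apply the cocycle identity to the triples $(h,t',t)$ and $(t',t,h)$). This is implicit in what you wrote but worth one sentence.
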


\begin{proof}
See \cite[A 1.1]{Hida}.
\end{proof}

\begin{cor} \label{ext2}\
\begin{enumerate}
\item Let $\rho_{K}: G_{F} \rightarrow \GL_{n}(K)$ be a representation, where $K$ is a field. If $\rho_{K}$ satisfies the same hypothesis of lemma \ref{ext1}, there exists a finite extension $L/K$ and a representation $\rho_{L}: G_{\Q} \longrightarrow \GL_{n} (L)$ extending $\rho_{K}$.
\item 
Let $A$ be a ring in the category $\mathfrak {C}$ and $\psi_A: G_{F} \rightarrow A^{\times}$ be a character, invariant by the action of each $\sigma$ in $G_{\Q}$. Then there exists a character $\psi'_{A} : G_{\Q} \rightarrow A^{\times}$ extending $\psi_{A}$. 
\end{enumerate}
\end{cor}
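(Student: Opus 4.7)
The plan is to apply Proposition~\ref{ext1} in each case and reduce everything to the vanishing (or triviality after a finite base change) of $H^2(\Delta, (\cdot)^\times)$, where $\Delta = \Gal(F/\Q) \simeq \Z/2\Z$ acts trivially on the coefficient ring. For any trivial $\Delta$-module $M$ written multiplicatively, standard cyclic cohomology gives $H^2(\Delta, M) = M/M^2$.

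For part (i), the hypotheses on $\rho_K$ are exactly those of Proposition~\ref{ext1}, so I would first apply that proposition to extract a 2-cocycle whose class lies in $H^2(\Delta, K^\times) \simeq K^\times/(K^\times)^2$. Choose any representative $c \in K^\times$ and set $L = K(\sqrt{c})$, which is a field extension of $K$ of degree at most $2$. Viewing $\rho_K$ as landing in $\GL_n(L)$ preserves the scalar-commutant hypothesis and condition (C), while the obstruction class becomes trivial in $L^\times/(L^\times)^2$ by construction. Proposition~\ref{ext1}(iii) then supplies an extension $\rho_L: G_\Q \to \GL_n(L)$, as desired.

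For part (ii), the situation simplifies because $n = 1$: the scalar-commutant hypothesis is automatic, and the $\sigma$-invariance $\psi_A^\sigma = \psi_A$ lets us take $r(t) \equiv 1$ in condition (C). Proposition~\ref{ext1} then produces an obstruction class in $H^2(\Delta, A^\times) = A^\times/(A^\times)^2$, and I expect this group to vanish identically. The $\bar{\Q}_p$-algebra structure on $A$ provides a section of $A^\times \twoheadrightarrow \bar{\Q}_p^\times$ and hence a splitting $A^\times \simeq \bar{\Q}_p^\times \times (1+\gm_A)$. The factor $\bar{\Q}_p^\times$ is divisible, so every element is a square, and on $1+\gm_A$ the squaring map is a bijection because $2$ is a unit in $A$: given $y \in \gm_A$, Hensel's lemma (or equivalently applying $\tfrac{1}{2}\log$ followed by $\exp$ via the $p$-adic logarithm on $1+\gm_A$) produces a unique $x \in \gm_A$ with $(1+x)^2 = 1+y$. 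Hence $A^\times = (A^\times)^2$, the obstruction vanishes, and Proposition~\ref{ext1}(iii) gives the extension $\psi'_A: G_\Q \to A^\times$.

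The corollary is genuinely a corollary of Proposition~\ref{ext1}: there is no real obstacle beyond bookkeeping. The only nontrivial input is the cohomological vanishing, which in (i) is achieved by adjoining a single square root, and in (ii) is automatic because the residue field has characteristic zero and $A^\times$ is consequently uniquely $2$-divisible.
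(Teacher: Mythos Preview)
Your proposal is correct and follows essentially the same approach as the paper: in both parts you reduce via Proposition~\ref{ext1} to the computation $\rH^2(\Delta,(\cdot)^\times)\simeq(\cdot)^\times/((\cdot)^\times)^2$, adjoin a square root in (i), and in (ii) invoke Hensel's lemma (your splitting $A^\times\simeq\bar{\Q}_p^\times\times(1+\gm_A)$ just makes explicit what the paper leaves implicit).
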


\begin{proof}

(i)There exists a functorial isomorphism $\rH^{2}(\Delta, K^{\times}) \simeq K^{\times}/(K^{\times})^{2}$. Choose an element $x \in K^{\times}$ corresponding to the cohomology class of $[\varrho]$ in $\rH^{2}(\Delta,K^{\times})$. Let $L$ be a finite extension of $K$ containing $\sqrt{x}$, then the cohomology class of $[\varrho]$ in $\rH^{2}(\Delta, L^{\times})$ vanishes. Hence, we may conclude by proposition \ref{ext1}.

(ii)The residue field of $A$ is $\bar{\Q}_p$ and it is algebraically closed. Hensel lemma's implies that the group $\rH^2(\Delta, A^\times)=A^\times/(A^\times)^2$ is trivial, so the desired result follows from proposition \ref{ext1}.
\end{proof}

\section{Pseudo-deformation and the ring $\cR^{ps}$}\label{pseudo-deformation}
\subsection{Pseudo-Character and pseudo-representation}
The first occurrence of pseudo-representation appeared in the work of Wiles (see \cite{wiles}, pp $563-564$ for details), but in his definition he requires the presence of a complex conjugation $c$ which forces the pseudo-representation to depend only its trace, but for us we will replace $c$ by $\gamma_0$ which is a fixed lift of $\Frob_\mathcal{P}$ to $G_{F_{v}}$. Under the presence of $\gamma_0$, we show in lemma \ref{Pseudo-character} that a pseudo-representation depends only on its trace. 

\begin{defn}\label{pseudo}
Let $A$ be a commutative ring and $\gamma_{0}$ be a fixed lift of $\Frob_{\mathcal{P}}$ to {\small $G_{F_{v}}$} such that $\phi(\gamma_{0}) \ne \phi^{\sigma}(\gamma_{0})$. 

Let $\tilde{a}, \tilde{d}: G_{F} \rightarrow A$, $\tilde{x}: G_{F}\times G_{F} \rightarrow A$ be a three continuous functions satisfying the following conditions: For all $g,h,t,s,w,n \in G_{F}$, we have:
\\$1)$ $\tilde{a}(s t)= \tilde{a}(s).\tilde{a}(t) + \tilde{x}(s, t)$
\\$2)$ $\tilde{d}(s t)= \tilde{d}(s).\tilde{d}(t)+\tilde{x}(t, s)$ 
\\$3)$ $\tilde{x}(s, t).\tilde{x}(w, n) = \tilde{x}(s, n).\tilde{x}(w, t )$
\\$4)$ $\tilde{x}(s t, w n ) = \tilde{a}.(s).\tilde{a}(n).\tilde{x}(t, w) +\tilde{a}(n).\tilde{d}(t).\tilde{x}(s, w )+\tilde{a}(s).\tilde{d}(w).\tilde{x}(t, n)+\tilde{d}(t).\tilde{d}(w).\tilde{x}(s, n)$
\\$5)$ $\tilde{a}(1)=\tilde{d}(1)=1$ and $\tilde{x}(h, 1)=\tilde{x}(g, 1)=0$.
\\$6)$ $\tilde{x}(\gamma_{0},g)=\tilde{x}(h,\gamma_0)=0$.

Then we say that $\pi_{A}=(\tilde{a},\tilde{d},\tilde{x})$ is a pseudo-representation (see \cite[2.2.3]{wiles} for more details). The trace and determinant of $\pi_{A}$ are the functions $\Tr(\pi_{A})(g)=\tilde{a}(g)+\tilde{d}(g)$, and $\det \pi_{A}(g)=\tilde{a}(g)\tilde{d}(g)-\tilde{x}(g,g)$.
\end{defn}

\begin{defn}
Let $\pi=(\phi, \phi^{\sigma}, 0)$ be the pseudo-representation associated to the representation $\rho_{|G_{F}}$.

Let $A$ be a ring in $\mathfrak{C}$ and $\pi_{A}=(\tilde{a}_A,\tilde{d}_A,\tilde{x}_A)$ be a continuous pseudo-representation, we say that $\pi_{A}$ is a pseudo-deformation if, and only if, $\pi_{A}$ mod $\mathfrak{m}_{A} = \pi$.
\end{defn}

Meantime, the work \cite{skinner-wiles} is a reference of pseudo-deformations.

\begin{lemma}\label{Pseudo-character}\
\begin{enumerate}
\item 
Let $A$ be a ring in $\mathfrak{C}$, and $\pi_{A}=(\tilde{a}_{A},\tilde{d}_{A},\tilde{x}_{A})$ be a pseudo-deformation, then $\pi_{A}$ depends only on $\Tr\pi_{A}$ and $\det \pi_{A}$ by the following formula:
\begin{equation}\label{formule trace}
\begin{split}
\tilde{a}_{A}(g)&= \frac{\Tr \pi_{A}(\gamma_{0} g)- \lambda_{2}\Tr \pi_{A}(g)   }{\lambda_{1}-\lambda_{2}} \\ \tilde{d}_{A}(g)&= \frac{\Tr \pi_{A}(\gamma_0 g)- \lambda_1 \Tr \pi_{A}(g)}{\lambda_2-\lambda_1}
\end{split}
\end{equation}
where $\lambda_{1}=\tilde{a}(\gamma_0)$ and $\lambda_{2}=\tilde{d}(\gamma_0)$.
\item 
If $A$ is a domain, then $\pi_{A}$ depends only on its trace (i.e $\det \pi_{A}$ depends on $\Tr \pi_{A}$).
\end{enumerate}
\end{lemma}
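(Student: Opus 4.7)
For part (i), my plan is to exploit condition (6) of Definition~\ref{pseudo}, which is designed precisely to make $\gamma_0$ behave like a diagonalising element. Substituting $s = \gamma_0$ into conditions (1) and (2) kills the cross-terms, producing the identities
\[
\tilde{a}_A(\gamma_0 g) = \lambda_1\, \tilde{a}_A(g), \qquad \tilde{d}_A(\gamma_0 g) = \lambda_2\, \tilde{d}_A(g).
\]
Pairing these with $\Tr \pi_A(g) = \tilde{a}_A(g) + \tilde{d}_A(g)$ gives a $2 \times 2$ linear system in $(\tilde{a}_A(g), \tilde{d}_A(g))$ whose coefficient matrix has determinant $\lambda_1 - \lambda_2$.

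The next step is to invert this system by Cramer's rule. Since $\pi_A$ is a pseudo-deformation of $\pi = (\phi, \phi^\sigma, 0)$, the residues of $\lambda_1$ and $\lambda_2$ modulo $\gm_A$ are the distinct values $\phi(\gamma_0)$ and $\phi^\sigma(\gamma_0)$, so $\lambda_1 - \lambda_2 \in A^\times$ and the inversion is legitimate; it yields the claimed formulas for $\tilde{a}_A(g)$ and $\tilde{d}_A(g)$. The function $\tilde{x}_A$ is then read off from condition (1) as $\tilde{x}_A(s,t) = \tilde{a}_A(st) - \tilde{a}_A(s)\tilde{a}_A(t)$. What remains is to check that $\lambda_1, \lambda_2$ themselves depend only on $\Tr \pi_A$ and $\det \pi_A$: condition (6) forces $\tilde{x}_A(\gamma_0, \gamma_0) = 0$, so they are the two roots of $X^2 - \Tr \pi_A(\gamma_0)\, X + \det \pi_A(\gamma_0)$, and Hensel's lemma in the complete local ring $A$ singles each of them out uniquely by its prescribed residue.

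For part (ii), the plan is to eliminate $\det \pi_A$ from the picture via a Cayley-Hamilton-type identity. Setting $s = t = g$ in conditions (1) and (2) and summing yields $\Tr \pi_A(g^2) = \Tr \pi_A(g)^2 - 2\det \pi_A(g)$, hence
\[
\det \pi_A(g) = \tfrac{1}{2}\bigl(\Tr \pi_A(g)^2 - \Tr \pi_A(g^2)\bigr),
\]
which is meaningful because $A$ is a $\bar{\Q}_p$-algebra. Substituting back into part (i) expresses $\pi_A$ as a function of $\Tr \pi_A$ alone. I do not anticipate a real obstacle: the only conceptual point is to recognise that condition (6) is engineered precisely so that $\gamma_0$ acts like a semisimple element with two distinct eigenvalues, after which the whole statement reduces to $2 \times 2$ linear algebra over the complete local ring $A$.
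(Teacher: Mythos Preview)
Your argument for part (i) is correct and essentially the same as the paper's: both use $\tilde{x}_A(\gamma_0,\cdot)=\tilde{x}_A(\cdot,\gamma_0)=0$ to identify $\lambda_1,\lambda_2$ as the Hensel-lifted roots of $X^2-\Tr\pi_A(\gamma_0)X+\det\pi_A(\gamma_0)$, and then solve the resulting $2\times 2$ linear system; you simply spell out what the paper summarises as ``follows directly from the relations defining pseudo-deformation.''

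For part (ii) your route is genuinely different and in fact simpler. The paper passes to the fraction field $K$ of $A$ and invokes Taylor's theorem to produce the unique semi-simple representation $\rho_K$ over $\bar K$ with $\Tr\rho_K=\Tr\pi_A$, concluding that $\det\pi_A=\det\rho_K$ is determined by the trace. You instead derive the Newton-type identity $\det\pi_A(g)=\tfrac12\bigl(\Tr\pi_A(g)^2-\Tr\pi_A(g^2)\bigr)$ directly from axioms (1) and (2), which is elementary and---since $A$ is a $\bar\Q_p$-algebra---does not even use the hypothesis that $A$ is a domain. The paper is aware of this identity (it appears in the proof of Proposition~\ref{rep}(iii)) but chooses the representation-theoretic argument here; your approach avoids the external input of Taylor's theorem at the cost of nothing.
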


\begin{proof}

(i)Since $\tilde{x}(\gamma_0,\gamma_0)=0$, $\det \pi_{A}(\gamma_{0})=\tilde{a}(\gamma_0)\tilde{d}(\gamma_0)$, then $\tilde{a}(\gamma_0)$ and $\tilde{d}(\gamma_0)$ are solutions of  \begin{equation}\label{pol gamma} X^{2}-\Tr \pi_{A}(\gamma_{0})X+ \det \pi_{A}(\gamma_{0}) \end{equation} By assumption {\small $\phi(\gamma_{0}) \ne \phi^{\sigma}(\gamma_{0})$}, so Hensel lemma's implies that $\tilde{a}(\gamma_0)$ and $\tilde{d}(\gamma_0)$ are the unique solution of (\ref{pol gamma}). 

Finally, the relation (\ref{formule trace}) follows directly from the relations defining pseudo-deformation.

(ii)Let $K$ be the fraction field of $A$ and $\bar{K}$ its algebraic closure. The function $\Tr \pi_{A}: G_{F} \rightarrow K$ is pseudo-character. By theorem \cite[1.1]{taylor}, there exists a unique semi-simple Galois representation $\rho_{K}:G_{F} \rightarrow \GL_{2}(\bar{K})$ such that $\Tr \rho_{K}= \Tr \pi_{A}$ and $\det \rho_{K}=\det \pi_{A}$.
\end{proof}
\subsection{Ordinary Pseudo-deformation}
In this section we will define a sub-functor of the functor pseudo-deformation of $\pi$ and will show that it is representable by an object of $\mathfrak{C}$.

\begin{defn}\label{G}

Let $\mathfrak{G}:\mathfrak{C}\rightarrow \mathrm{Set}$ be the functor of all pseudo-deformations $\pi_A=(\tilde{a}_A,\tilde{d}_A, \tilde{x}_A)$ which satisfy the following conditions: 
\begin{enumerate}
\item 
For all $h\in G_{F_{v}}$, $h' \in G_{F}$, $\tilde{x}_{A}(h',h)=0$.
\item
$\tilde{d}_{A}(g)=1$ if $g\in I_{v}$.
\item 
$\Tr \pi_{A}(t^{-1} g t)=\Tr \pi_{A}(g)$ for each $t$ in $G_{\Q}$ and $g\in G_{F}$.  
\end{enumerate}
\end{defn}

\begin{prop} \label{rep}\

\begin{enumerate}

\item 
Let $\pi'_{\epsilon}=(a',d',x')$ be an element of $\mathfrak{G}(\bar{\Q}_p[\epsilon])$, then for any $h$ in $G_F$, $\frac{x'(h,.)}{\phi^{\sigma}(h)\phi(.)}$ (resp. $\frac{x'(.,h)}{\phi^{\sigma}(.)\phi(h)}$) is an element of $\rZ^{1}(F,\phi/\phi^\sigma)$ (resp. $\rZ^{1}(F,\phi^{\sigma}/\phi)$).

\item 
The functor $\mathfrak{G}$ is representable by $(\cR^{ps}, \pi^{ps})$. 
\item
The determinant $\det\pi^{ps}$ is invariant by the action of $\sigma$.

\end{enumerate}
\end{prop}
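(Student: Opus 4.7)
My plan is to handle the three parts of Proposition~\ref{rep} in order, each exploiting the explicit formal structure of pseudo-deformations from Definition~\ref{pseudo}.

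For (i), I observe that $\pi'_\epsilon = (a', d', x')$ reduces to $\pi = (\phi, \phi^\sigma, 0)$ modulo $\epsilon$, so I may write $a' = \phi + \epsilon a_1$, $d' = \phi^\sigma + \epsilon d_1$ with $x'$ valued in $\epsilon \bar{\Q}_p$. Substituting into axiom~(4) of Definition~\ref{pseudo} with $(s,t,w,n) = (h, 1, g, g')$ and using axiom~(5) to kill the terms whose relevant argument equals $1$, the identity collapses (mod $\epsilon^2$) to
\[
x'(h, g g') \;=\; \phi(g')\, x'(h, g) \,+\, \phi^\sigma(g)\, x'(h, g').
\]
Dividing through by $\phi^\sigma(h)\phi(g)\phi(g')$ exhibits $g \mapsto x'(h, g)/\bigl(\phi^\sigma(h)\phi(g)\bigr)$ as a 1-cocycle in the appropriate character twist of $\bar{\Q}_p$. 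The symmetric case $x'(\cdot, h)/\bigl(\phi^\sigma(\cdot)\phi(h)\bigr)$ follows from the dual specialization $(s,t,w,n) = (g, g', h, 1)$.

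For (ii), my approach is Schlessinger's criterion. The functor $\mathfrak{G}$ is naturally a closed subfunctor of the functor of all pseudo-deformations of $\pi$ in the sense of Definition~\ref{pseudo}. Axioms~(1)--(6) of that definition, together with conditions~(1)--(3) of Definition~\ref{G}, are all polynomial or linear identities in the components $(\tilde a, \tilde d, \tilde x)$, visibly preserved under fiber products $A_1 \times_{A_0} A_2$ in $\mathfrak{C}$; this gives (H1)--(H3). The crucial technical step is the finite-dimensionality of the tangent space $t_\mathfrak{G}$: by part~(i), the off-diagonal data of a tangent vector yield classes in Selmer groups built from $\rH^1(F, \phi^\sigma/\phi)$ and $\rH^1(F, \phi/\phi^\sigma)$, cut down by the local vanishing at $v$ forced by condition~(1) of Definition~\ref{G}, while the diagonal infinitesimal characters $a_1, d_1$ live in finite-dimensional subspaces of $\Hom(G_F, \bar{\Q}_p)$ controlled by conditions~(2) and~(3) and by axiom~(6). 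The finite-dimensionality of these Selmer spaces is essentially contained in the cohomological estimates of section~\S2 (cf.~Proposition~\ref{tg}).

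For (iii), I would derive the Cayley--Hamilton-type identity
\[
2\,\det \pi^{ps}(g) \;=\; \bigl(\Tr \pi^{ps}(g)\bigr)^2 - \Tr \pi^{ps}(g^2),
\]
which comes directly from axioms~(1) and~(2) of Definition~\ref{pseudo}: expanding $\Tr \pi^{ps}(g^2) = \tilde a(g^2) + \tilde d(g^2) = \tilde a(g)^2 + \tilde d(g)^2 + 2\tilde x(g,g)$ and comparing with $\det \pi^{ps}(g) = \tilde a(g)\tilde d(g) - \tilde x(g,g)$ gives the displayed equality. Condition~(3) of Definition~\ref{G} asserts $\sigma$-invariance of $\Tr \pi^{ps}$ on $G_F$, and $2$ is invertible in the $\bar{\Q}_p$-algebra $\cR^{ps}$, so $\det \pi^{ps}$ is $\sigma$-invariant as well.

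The main obstacle is part~(ii): while the Schlessinger bookkeeping is largely formal, controlling $\dim_{\bar{\Q}_p} t_\mathfrak{G}$ requires a careful interaction between the normalization axiom~(6) of Definition~\ref{pseudo}, the three conditions of Definition~\ref{G}, and the restriction--inflation / Selmer calculus developed in section~\S2. I expect the tangent space to be small (consistent with $\cR^{ps}$ playing the role of $\cR_{\tau=1}$ in Theorem~\ref{main-thm1}), and verifying its exact dimension is the substantive part of the argument.
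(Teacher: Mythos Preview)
Your proposal is correct and follows essentially the same route as the paper. Part~(i) is exactly the computation the paper alludes to when it writes ``follows from the defining properties of a pseudo-deformation''; your explicit use of axiom~(4) with $(s,t,w,n)=(h,1,g,g')$ is the intended one. Part~(iii) is identical to the paper's argument: the identity $\Tr\pi^{ps}(g^2)=(\Tr\pi^{ps}(g))^2-2\det\pi^{ps}(g)$ plus $\sigma$-invariance of the trace. For part~(ii) the paper also invokes Schlessinger and isolates finiteness of $t_{\mathfrak G}$ as the only nontrivial point, citing \cite[\S2.10]{skinner-wiles} and the finite-dimensionality of $\rH^1(F,\phi/\phi^\sigma)$ rather than Proposition~\ref{tg}; your route via part~(i) reaches the same conclusion.

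One small overcorrection in your final paragraph: for Proposition~\ref{rep}(ii) you only need $\dim_{\bar{\Q}_p} t_{\mathfrak G}<\infty$, not its exact value. The paper does not compute the dimension here either; the sharper control (that $\cR^{ps}_{red}$ is a DVR) only enters later, via the surjection onto $\cR_{\tau=1}$ in Lemma~\ref{tg ps} and Theorem~\ref{main-thm1}. So the ``substantive part'' you flag is in fact deferred, and your sketch already suffices for representability.
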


\begin{proof}  

(i)It follows from the defining properties of a pseudo-deformation.

(ii)The functor $\mathfrak{G}$ satisfies Schlesinger's criteria, the only non-trivial point is the finiteness of the dimension of the tangent space $t_{\mathfrak{G}}$ of $\mathfrak{G}$, and this is provided by the same argument of lemma \cite[\S2.10]{skinner-wiles} (since $\rH^{1}(F,\phi/\phi^{\sigma})$ has a finite dimension).

(iii)A direct computation shows that $$\Tr \pi^{ps}(g^{2})=(\Tr \pi^{ps}(g))^{2} - 2 \det \pi^{ps}(g),$$ so the assumption follows from the fact that $\forall t\in G_\Q, \forall g\in G_F$, $\Tr \pi^{ps}(t^{-1}gt)= \Tr \pi^{ps}(g)$.

\end{proof}

\begin{lemma}\label{str} 
There exists a natural morphism $\varLambda \longrightarrow \cR^{ps}$ induced by the deformation $\det \pi^{ps}$ of $\det \pi$.
\end{lemma}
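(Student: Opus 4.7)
The plan is to produce the desired morphism by extending the $\sigma$-invariant character $\det\pi^{ps}$ from $G_F$ to $G_{\Q,Np}$, and then invoking the universal property of $\varLambda$ as the ring parametrising deformations of $\det\rho$.

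First, I would invoke Proposition \ref{rep} (iii), which states that $\det\pi^{ps}: G_F \to (\cR^{ps})^\times$ is invariant under conjugation by any element of $G_\Q$ (equivalently, by the non-trivial element of $\Delta$). Since $\cR^{ps}$ lies in $\mathfrak{C}$ by Proposition \ref{rep} (ii), its residue field is the algebraically closed field $\bar{\Q}_p$, and Corollary \ref{ext2} (ii) applies directly to yield a continuous character $\psi: G_{\Q,Np} \to (\cR^{ps})^\times$ extending $\det\pi^{ps}$.

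Second, by Proposition \ref{ext1} (iii) any two such extensions of $\det\pi^{ps}$ to $G_{\Q,Np}$ differ by a twist by $\epsilon_F$. Modulo $\gm_{\cR^{ps}}$ one has $\det\pi^{ps} = \phi\phi^{\sigma} = (\det\rho)_{|G_F}$, so exactly one of the two candidate extensions reduces to $\det\rho$ on all of $G_{\Q,Np}$; I would take $\psi$ to be this canonical extension. Then $\psi$ is, by construction, a deformation of $\det\rho$ to the object $\cR^{ps}$ of $\mathfrak{C}$.

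Third, the ring $\varLambda$ has already been identified in the introduction with $\bar{\Q}_p[[1+q\Z_p]]$, and by class field theory every deformation of $\det\rho$ to an object $A$ of $\mathfrak{C}$ corresponds to a unique continuous morphism $\varLambda \to A$. Applied to $A = \cR^{ps}$ and the extension $\psi$ just selected, this yields the required canonical morphism $\varLambda \to \cR^{ps}$. The only delicate step is the canonical selection between the two extensions of $\det\pi^{ps}$ up to $\epsilon_F$-twist; the remaining steps are direct applications of Corollary \ref{ext2}, Proposition \ref{rep} and the identification of $\varLambda$ from the introduction, so I do not expect any further obstacle.
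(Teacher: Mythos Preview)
Your proposal is correct and follows essentially the same route as the paper's own proof: invoke Proposition \ref{rep} (iii) for the $\sigma$-invariance of $\det\pi^{ps}$, apply Corollary \ref{ext2} (ii) to extend to $G_{\Q,Np}$, select the unique extension whose reduction modulo $\gm_{\cR^{ps}}$ is $\det\rho$, and then use the universal property of $\varLambda$. Your version is somewhat more explicit about why exactly one of the two $\epsilon_F$-twists reduces to $\det\rho$, but the argument is identical in substance.
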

\begin{proof} According to (iii) of lemma \ref{rep} and corollary \ref{ext2}, we can extend $\det \pi^{ps}$ into a character $\varphi: G^{ab}_{\Q,Np}\rightarrow (\cR^{ps})^{\times}$ and we choose one whose reduction modulo $\gm_{\cR^{ps}}$ is equal to $\det\rho$. Therefore, there exists a unique morphism $\varLambda \longrightarrow \cR^{ps}$ which sends the universal deformation of $\det \rho$ to $\varphi$. 

\end{proof}

\subsection{Proof of the isomorphism $\cR^{ps}_{red}\simeq \cR_{\tau=1}$}

\begin{lemma}\label{g}
Let $g \rightarrow  \left(\begin{smallmatrix}
\tilde{a}_{g}&\tilde{b}_{g}\\
\tilde{c}_{g}&\tilde{d}_{g}\end{smallmatrix}\right)$ be the realization of $\rho^{ord}$ in a basis $\mathfrak{B}_{\cR}^{ord}=\{v_{1},v_{2}\}$ (see lemma \ref{p-ord}), then:
\begin{enumerate}
\item 
The $3$-tuple $\pi_{\cR_{\tau=1}}=(\tilde{a}_{|G_{F}},\tilde{d}_{|G_{F}}, \tilde{b}_{|G_{F}}\tilde{c}_{|G_{F}})$ is a pseudo-deformation of $\pi$.
\item 
There exists a unique morphism $g:\cR^{ps} \rightarrow \cR_{\tau=1}$ inducing the pseudo-deformation $\pi_{\cR_{\tau=1}}.$
\end{enumerate}
\end{lemma}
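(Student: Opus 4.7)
The plan for part (i) is to set $\tilde{x}(g,h) := \tilde{b}(g)\tilde{c}(h)$ for $g, h \in G_F$ and verify that $\pi_{\cR_{\tau=1}} = (\tilde{a}_{|G_F}, \tilde{d}_{|G_F}, \tilde{x})$ is a pseudo-deformation valued in $\cR_{\tau=1}$ which also satisfies the three conditions of Definition \ref{G}. The six axioms of Definition \ref{pseudo} are formal consequences of the matrix identity $\rho^{ord}(st) = \rho^{ord}(s)\rho^{ord}(t)$: axioms (1)--(2) read off the diagonal entries, axiom (4) is obtained by expanding the off-diagonal product, axiom (3) is commutativity, axiom (5) is immediate from $\rho^{ord}(1) = I$, and axiom (6) uses crucially that $\rho^{ord}(\gamma_0)$ is diagonal in $\mathfrak{B}_{\cR}^{ord}$ by Lemma \ref{p-ord}, whence $\tilde{b}(\gamma_0) = \tilde{c}(\gamma_0) = 0$. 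Reduction modulo $\gm_\cR$ recovers $\pi = (\phi, \phi^\sigma, 0)$ because $\rho_{|G_F} = \phi \oplus \phi^\sigma$ in the basis $(e_1, e_2)$.

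The main obstacle is showing that $\pi_{\cR_{\tau=1}}$ actually takes values in the fixed ring $\cR_{\tau=1}$. My approach exploits the very definition of $\tau$: by universality of $(\cR, \rho^{ord})$, there exists $M \in \GL_2(\cR)$ with $M \equiv I \pmod{\gm_\cR}$ such that, for every $g \in G_{\Q}$,
\[
\epsilon_F(g)\,\tilde{N}\,\rho^{ord}(g)\,\tilde{N} \;=\; M\,\rho^{ord,\tau}(g)\,M^{-1},
\]
where $\rho^{ord,\tau}$ denotes the deformation obtained by applying $\tau$ entry-wise to $\rho^{ord}$. Specializing to $g = \gamma_0 \in G_{F_v}$, the left-hand side is simply $\rho^{ord}(\gamma_0) = \mathrm{diag}(\tilde{a}(\gamma_0), \tilde{d}(\gamma_0))$ because $\epsilon_F(\gamma_0) = 1$ and this matrix is already diagonal. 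The $p$-regularity $\phi(\gamma_0) \neq \phi^\sigma(\gamma_0)$ combined with $M \equiv I \pmod{\gm_\cR}$ will force $M$ to be diagonal (the off-diagonal entries of $M$ must be annihilated by the unit $\tau(\tilde{a}(\gamma_0)) - \tilde{d}(\gamma_0)$), and comparing the diagonal entries then gives $\tau(\tilde{a}(\gamma_0)) = \tilde{a}(\gamma_0)$, $\tau(\tilde{d}(\gamma_0)) = \tilde{d}(\gamma_0)$. For arbitrary $g \in G_F$ (where $\epsilon_F(g) = 1$), comparing entries of the two matrices gives $\tau(\tilde{a}(g)) = \tilde{a}(g)$ and $\tau(\tilde{d}(g)) = \tilde{d}(g)$, while the off-diagonal identities $(m_{11}/m_{22})\tau(\tilde{b}(g)) = -\tilde{b}(g)$ and $(m_{22}/m_{11})\tau(\tilde{c}(g)) = -\tilde{c}(g)$ multiply to give $\tau(\tilde{b}(g)\tilde{c}(h)) = \tilde{b}(g)\tilde{c}(h)$ for every $g, h \in G_F$.

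The remaining conditions of Definition \ref{G} are then immediate: condition (1) follows from the upper-triangularity of $\rho^{ord}_{|G_{F_v}}$ in $\mathfrak{B}_\cR^{ord}$ (Lemma \ref{p-ord}), which forces $\tilde{c}_{|G_{F_v}} = 0$; condition (2) is exactly the ordinariness that defines the functor $\mathcal{D}$; and condition (3) holds because $\rho^{ord}$ is a representation of all of $G_\Q$ and $G_F$ is normal in $G_\Q$, so the trace of $\rho^{ord}$ is a class function whose restriction to $G_F$ is invariant under $G_\Q$-conjugation. Part (ii) will then follow at once from the universal property of $(\cR^{ps}, \pi^{ps})$ established in Proposition \ref{rep}: the pseudo-deformation $\pi_{\cR_{\tau=1}} \in \mathfrak{G}(\cR_{\tau=1})$ is classified by a unique morphism $g: \cR^{ps} \to \cR_{\tau=1}$.
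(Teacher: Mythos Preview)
Your proof is correct and complete, but the key step---showing that the values of $\pi_{\cR_{\tau=1}}$ lie in the fixed ring $\cR_{\tau=1}$---is argued differently from the paper.

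The paper first obtains a morphism $\cR^{ps}\to\cR$ (using that the three conditions of Definition~\ref{G} are satisfied over $\cR$), and then shows it factors through $\cR_{\tau=1}$ by an appeal to Lemma~\ref{Pseudo-character}: since $\tau$ sends $\Tr\rho^{ord}$ to $\Tr(\rho^{ord}\otimes\epsilon_F)$, the trace restricted to $G_F$ is $\tau$-fixed; Hensel's lemma then puts the eigenvalues $\lambda_1,\lambda_2$ of $\rho^{ord}(\gamma_0)$ in $\cR_{\tau=1}$, and the explicit formulas $\tilde a(g)=(\Tr\rho^{ord}(\gamma_0 g)-\lambda_2\Tr\rho^{ord}(g))/(\lambda_1-\lambda_2)$ etc.\ show that $\tilde a,\tilde d,\tilde x$ are all $\tau$-fixed. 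Your approach instead unwinds the very definition of $\tau$ as the classifying map of the twisted deformation $\tilde N(\rho^{ord}\otimes\epsilon_F)\tilde N$: the strict-equivalence matrix $M$ is forced to be diagonal by evaluating at $\gamma_0$ and using $p$-regularity, and then entrywise comparison gives the invariance directly. Your route is more hands-on and self-contained (it avoids invoking Lemma~\ref{Pseudo-character} and the Hensel argument), while the paper's route is slightly more conceptual in that it makes manifest that $\tau$-invariance of the pseudo-deformation is really a statement about the trace, which is basis-independent. Both arguments rely on the same pivot: the distinguished element $\gamma_0$ with distinct eigenvalues.
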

\begin{proof}

(i)It follows from the relations defining a pseudo-representation.

(ii)Since the representation $\rho^{ord}_{|G_{F}}$ is ordinary at $G_{F_{v}}$, there exists a unique morphism
$g:\cR^{ps} \rightarrow \cR$, such that $g \circ\pi^{ps}=\pi_{\cR_{\tau=1}}$. 

Moreover, the action of $\tau$ on $\Tr \rho^{ord}$ (resp. on $\det \rho^{ord}$) is given by $\Tr \rho^{ord} \rightarrow \Tr \rho^{ord} \otimes \epsilon_{F}$ (resp. $\det \rho^{ord} \rightarrow \det \rho^{ord}\otimes \epsilon_{F}$), so $\tau$ acts trivially on $\Tr \rho^{ord}_{|G_{F}}$ (resp. on $\det \rho^{ord}$). 

Since $\cR_{\tau=1}$ is henselian (even complete), $\phi(\gamma_0)\ne \phi^{\sigma}(\gamma_0)$ and $\Tr \rho^{ord}(\gamma_{0})$, $\det \rho^{ord}(\gamma_{0})$ are elements of the ring $\cR_{\tau=1}$ ($\gamma_{0} \in G_{F_{v}}\subset G_{F}$), the eigenvalues $\lambda_{1}$ and $\lambda_{2}$ of $\rho^{ord}(\gamma_0)$ are in $\cR_{\tau=1}$. 

Hence, a direct computation shows that {\small $\tilde{a}(g)= \frac{\Tr \rho^{ord}(\gamma_0 g)- \lambda_{2}\Tr \rho^{ord}(g)   }{\lambda_{1}-\lambda_{2}}$, $\tilde{d}(g)= \frac{\Tr \rho^{ord}(\gamma_0 g)- \lambda_{1}\Tr \rho^{ord}(g)   }{\lambda_{2}-\lambda_{1}}$} and { \small $\tilde{a}(gh)=\tilde{a}(g)\tilde{a}(h)+\tilde{x}(g,h)$}. Therefore, $\tau(\tilde{a}_{|G_{F}})=\tilde{a}_{|G_{F}}$,  $\tau(\tilde{d}_{|G_{F}})=\tilde{d}_{|G_{F}}$ and $\tau( \tilde{b}_{|G_{F}}. \tilde{c}_{|G_{F}})=\tilde{b}_{|G_{F}}.\tilde{c}_{|G_{F}}$, hence $g$ factors through $\cR_{\tau=1}$.
\end{proof}

\begin{lemma}\label{tg ps}\

The morphism $g: \cR^{ps} \rightarrow \cR_{\tau=1}$ is surjective.

\end{lemma}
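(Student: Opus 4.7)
The plan is to reduce surjectivity of $g$ to the production of a single element of $\cR^{ps}$ whose image is a uniformizer of the DVR $\cR_{\tau=1}$. Since $g\colon\cR^{ps}\to\cR_{\tau=1}$ is a local homomorphism of complete Noetherian local $\bar{\Q}_p$-algebras with common residue field $\bar{\Q}_p$, the topological Nakayama lemma says that $g$ is surjective if and only if the induced map on cotangent spaces
$$\gm_{\cR^{ps}}/\gm_{\cR^{ps}}^{2}\longrightarrow \gm_{\cR_{\tau=1}}/\gm_{\cR_{\tau=1}}^{2}$$
is surjective. Lemma \ref{inv}(iii) asserts that $\cR_{\tau=1}$ is a DVR, so its cotangent space is $1$-dimensional; it thus suffices to exhibit one element $\xi\in\gm_{\cR^{ps}}$ with $g(\xi)\notin\gm_{\cR_{\tau=1}}^{2}$, namely a uniformizer.

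To construct $\xi$, I would invoke Proposition \ref{tg}(i): there exist $g_{0},h_{0}\in G_{F}$ such that $\tilde{b}(g_{0})$ and $\tilde{c}(h_{0})$ each have $\cR$-valuation equal to $1$. Set $\xi:=\tilde{x}^{ps}(g_{0},h_{0})\in\cR^{ps}$. Since $\pi^{ps}\equiv\pi=(\phi,\phi^{\sigma},0)\pmod{\gm_{\cR^{ps}}}$, the element $\xi$ lies in $\gm_{\cR^{ps}}$. From the construction of $g$ in Lemma \ref{g}, the identity $g\circ\pi^{ps}=\pi_{\cR_{\tau=1}}=(\tilde{a}_{|G_{F}},\tilde{d}_{|G_{F}},\tilde{b}_{|G_{F}}\tilde{c}_{|G_{F}})$ yields $g(\xi)=\tilde{b}(g_{0})\tilde{c}(h_{0})$, an element of $\cR_{\tau=1}$ whose valuation in $\cR$ equals $2$.

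Finally, by Lemma \ref{inv}(v) the inclusion $\cR_{\tau=1}\hookrightarrow\cR$ is totally ramified of degree $2$, which means $\nu_{\cR}(r)=2\,\nu_{\cR_{\tau=1}}(r)$ for every non-zero $r\in\cR_{\tau=1}$. Applied to $r=g(\xi)$, this forces $\nu_{\cR_{\tau=1}}(g(\xi))=1$, so $g(\xi)$ is a uniformizer of $\cR_{\tau=1}$, and the surjectivity of $g$ follows. I do not anticipate a serious obstacle: all the ingredients — the DVR structure of $\cR_{\tau=1}$, the existence of $g_{0},h_{0}$ with the required valuation, and the ramification index being $2$ — are already established. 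The only mildly delicate point is verifying that the single product $\tilde{b}(g_{0})\tilde{c}(h_{0})$ genuinely originates from an element of $\cR^{ps}$ via its $\tilde{x}^{ps}$-coordinate, but this is exactly what the definition of $\pi_{\cR_{\tau=1}}$ in Lemma \ref{g} records.
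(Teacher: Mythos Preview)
Your proof is correct and follows essentially the same strategy as the paper: exhibit $\tilde{x}(g_0,h_0)=\tilde{b}(g_0)\tilde{c}(h_0)$ as an element in the image of $\gm_{\cR^{ps}}$ that is a uniformizer of $\cR_{\tau=1}$, using Proposition~\ref{tg}(i) and the ramification index~$2$ from Lemma~\ref{inv}(v). The only cosmetic difference is that the paper first shows $g$ is a finite morphism via the $\varLambda$-algebra structure (Lemma~\ref{str}) before applying ordinary Nakayama, whereas you invoke the complete-local version of Nakayama directly---your route is slightly more streamlined and avoids the detour through $\varLambda$.
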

\begin{proof}
According to lemma \ref{tg}, there exists $g_{0},h_{0}$ in $G_{F}$ such that the order of both $\tilde{b}(g_{0})$ and $\tilde{c}(h_{0})$ in $\cR$ is one, so $\tilde{x}(g_{0},h_{0})=\tilde{b}(g_0)\tilde{c}(h_0)$ is of order $2$ in $\cR$. However, $\cR_{\tau=1}$ is DVR and the injection $\iota: \cR_{\tau=1} \hookrightarrow \cR$ is ramified with a ramification index is equal to $2$, so $\tilde{b}(g_{0}) \tilde{c}(h_{0})=\tilde{x}(g_0,h_0)$ has order one in $\cR_{\tau=1}$ and since $\cR^{ps}$ is the universal ring representing the functor $\mathfrak{G}$, $\tilde{x}(g_0,h_0)$ is contained in the image of the maximal ideal of $\cR^{ps}$ by the morphism $g$.

Let $\mathcal{B}$ be the image of the morphism $g$, so $\mathcal{B}$ is a sub-algebra of $\cR_{\tau=1}$. Denote by $\nu_{\tau}$ the discrete valuation of the ring $\cR_{\tau=1}$ and by $\mathfrak{m}_{\mathcal{B}}$ for the maximal ideal of $\mathcal{B}$. It follows from the discussion above that $\gm_{\mathcal{B}}$ contains an uniformizing element of $\cR_{\tau=1}$. Write $\mathfrak{a}$ for the ideal $\mathfrak{m}_{\mathcal{B}} \cR_{\tau=1}$, so $\mathfrak{a}=\mathfrak{m}_{\cR_{\tau=1}}$ since $\gm_{\mathcal{B}}$ contains an uniformizing element of $\cR_{\tau=1}$. 

By lemma \ref{str}, the ring $\cR^{ps}$ has structure of $\varLambda$-algebra and from the fact that $\det \pi_{\cR_{\tau=1}}=g\circ \det \pi^{ps}$, $g$ is morphism of $\varLambda$-algebra.  Since $\cR_{\tau=1}$ is a finite $\varLambda$-module, the morphism $g:\cR^{ps}\rightarrow \cR_{\tau=1}$ is finite. 

Now, apply Nakayama's lemma to the $\cR^{ps}$-module $\cR_{\tau=1}$, we find that $1$ is a generator of $\cR_{\tau=1}$ as $\cR^{ps}$-module. Hence, the morphism $g$ is surjective.

\end{proof}

\subsection*{Proof of theorem \ref{main-thm1}}

We will show that the morphism $g: \cR^{ps} \rightarrow \cR_{\tau=1}$ rises to an isomorphism $\cR^{ps}/\mathfrak{N}\simeq \cR_{\tau=1}$, where $\mathfrak{N}$ is the radical of $\cR^{ps}$. According to proposition \ref{tg ps}, the morphism $g$ is surjective. If we denote by $\mathfrak{L}$ the kernel of $g$, the statement is equivalent to $\mathfrak{L} \subset \mathfrak{N}$, which means that $\Spec \cR_{\tau=1}=\Spec \cR^{ps}$.

Let $\gp$ be a prime ideal of $\cR^{ps}$, and{ \small $\pi'':\cR^{ps} \twoheadrightarrow \cR^{ps}/\gp$} be the canonical surjection. Write $K$ for the fraction field of $\cR^{ps}/\gp$ and $\pi_{\gp}=(\tilde{a}_{\gp}, \tilde{d}_{\gp}, \tilde{x}_{\gp})$ for the pseudo-deformation obtained by the composition $\pi'' \circ \pi^{ps}$. 

If $\tilde{x}_{\gp}=0$, then {\small $\rho_{K}(g)=\left(\begin{smallmatrix}
\tilde{a}_{\gp}(g)&0\\
0&\tilde{d}_{\gp}(g)\end{smallmatrix}\right)$} is the unique semi-simple representation associated to $\pi_\gp$. 

By assumption $\Tr(\rho_{K})=\Tr(\rho_{K}^{\sigma})$, so $\tilde{a}_{\gp}^{\sigma}=\tilde{d}_{\gp}$ (since the action of $\sigma$ permute $\phi$ et $\phi^{\sigma}$ and $\phi \ne \phi^{\sigma}$). Hence, $\Ind^{F}_{\Q}\tilde{a}_{\gp}$ is a representation extending $\rho_{K}$ to $G_{\Q}$.

If there exists $g_{1},h_{1} \in G_{F}$ such that  $\tilde{x}_{\gp}(g_1,h_1) \ne 0$, proposition \cite[\S2.2.1]{wiles} implies that there exists a Galois representation {\small $\rho_{K}: g \rightarrow \left(\begin{smallmatrix}
\tilde{a}_{\gp}(g) & \tilde{x}_{\gp}(g,h_{1})/\tilde{x}_{\gp}(g_{1},h_{1})\\
\tilde{x}_{\gp}(g_{1},g)&\tilde{d}_{\gp}(g) \end{smallmatrix}\right)$} such that {\small$\Tr \rho_{K}= \Tr \pi_{\gp}$}. 

The fact that $\rho_{K}(\gamma_{0})$ is diagonal with distinct eigenvalues and {\small $\tilde{x}_{\gp}(g_1,h_1)\ne 0$} implies that $\rho_{K}$ is absolutely-irreducible. Moreover, $\Tr \pi_{A}$ is invariant by the action of $\sigma$ (i.e {\small $\Tr\pi_{A}=\Tr\rho_{K}=\Tr\rho^{\sigma}_{K}$}), so \cite[Theorem \S1]{taylor} yields an isomorphism $\rho_{K}\otimes \overline{K} \simeq\rho_{K}^{\sigma} \otimes \overline{K}$. 

Hence, there exists $r(\sigma) \in \GL_{2}(L')$, where $L'$ is finite extension of $K$ such that $r(\sigma)\rho_{K}r^{-1}(\sigma)= \rho^{\sigma}_{K}$. Thus, the representation $\rho_{K}$ satisfies the assumption of corollary \ref{ext2}. Therefore, there exists a finite extension $L/L'$ and representation $\rho_L: G_{\Q} \longrightarrow \GL_2 (L)$ extending $\rho_K$. 

Let $\mathcal {A}$ be the integral closure of $\cR^{ps}/\gp$ in $L$. Since $\cR^{ps}/\gp$ is a local Nagata ring (even complete), $\mathcal {A}$ is finite over $\cR^{ps}/\gp$, by using the same argument as in (ii) of lemma \ref{inv}, we deduce that $\mathcal{A}\in\mathfrak{C}$. Moreover, $\Tr \rho_{L}(\sigma^{2})= \Tr \rho_{L}(\sigma))^{2} - 2 \det \rho_L(\sigma)$, so $\Tr(\rho_L(G_\Q))\subset \mathcal{A}$. Thus, $\Tr\rho_L: G_ {\Q}\rightarrow \mathcal{A}$ is a pseudo-character such that its reduction modulo $\gm_{\mathcal{A}}$ is equal to $\Tr \rho_{|G_{F}}$. 

According to proposition \ref{ext1}, the restriction of $\rho$ to $G_{F}$ extends uniquely to $G_{\Q}$ since $\rho \simeq \rho\otimes \epsilon_{F}$, hence theorem \cite[1]{taylor} implies that the reduction of the pseudo-character $\Tr\rho_{L}$ modulo $\gm_{A}$ is equal to $\Tr(\rho)$. 

According to a theorem of Nyssen \cite{nyssen} and Rouquier \cite{Rouquier}, there exists a deformation $\rho_{\mathcal{A}}: G_{\Q}\rightarrow \GL_2 (\mathcal{A})$ of $\rho$, such that $\Tr \rho_{\mathcal{A}}=\Tr\rho_{L}$. In addition, we have $G_{F_{v}}=G_{\Q_{p}}$ (since $p$ splits in $F$) and by construction $(\rho_{K})_{|G_{\Q_{p}}} \simeq (\rho_{\mathcal{A}}\otimes L)_{|G_{\Q_{p}}} \simeq  \left(
\begin{smallmatrix}
\psi'_{1}&*\\
0 &\psi'_{2}\end{smallmatrix}\right)$, where $\psi'_{2}:G_{\Q_{p}}\rightarrow \mathcal{A}^\times$ is an unramified character lifting $\phi^{\sigma}_{|G_{F_{v}}}$ (i.e $\psi'_2 =(\tilde{d}_{\gp})_{|G_{\Q_{p}}}$), hence proposition \cite[\S5.1]{D-B} implies that the representation $\rho_{\mathcal{A}}$ is ordinary at $p$. 

Thus, there exists a unique morphism $h: \cR \rightarrow \mathcal{A}$ inducing $\rho_\mathcal{A}$.
$$\xymatrix{
\cR^{ps} \ar@{->>}[d]^{\pi''} \ar[r]^{g}
&\cR_{\tau=1} \ar@{^{(}->}[r]^{\iota}
&\cR \ar[ld]^{h}\\
\cR^{ps}/\gp \ar@{^{(}->}[r] & \mathcal{A}}$$

The morphisms $h\circ \iota \circ g$ and $\pi''$ induce two pseudo-deformations of $\pi$ with the same trace and determinant. Now, thanks to lemma \ref{Pseudo-character} we know that a pseudo-deformation depends only on its trace and determinant, so $h\circ \iota \circ g = \pi''$. Hence, the diagram above is commutative. Thus, we have the inclusion $\mathfrak{L}\subset \gp$ and which implies that the ideal $\mathfrak{L}$ is contained in the radical of $\cR^{ps}$.

\section{Proof of the main Theorem \ref{main-thm} }

\subsection*{Hypothesis} Assume that $L_{0}$ is an abelian extension of $F''$. \hspace{1cm}  $\left(\pmb{ \mathrm{G}}\right)$

In this section, we proof that $\cR_{\tau=1}$ is isomorphic to $\varLambda$ when $\left(\pmb{ \mathrm{G}}\right)$ holds, and it is equivalent to show that the tangent space of $\cR_{\tau=1}/{\small (\gm_{\varLambda},\gm^2_{\cR_{\tau=1}})}$ is trivial when $\left(\pmb{ \mathrm{G}}\right)$ holds.

\subsection{Tangent space of $\cR_{\tau=1}$}

Denote by $t_{\cR_{\tau=1}}$ the tangent space of $\cR_{\tau=1}$, since $\cR_{\tau=1}$ is DVR (see \S\ref{inv}), the dimension of $t_{\cR_{\tau=1}}$ is one.  

Write $t'_{\cR_{\tau=1}}$ for the sub-space of $t_{\cR_{\tau=1}}$ of pseudo-deformations with fixed determinant. It follows from theorem \ref{main-thm1} that $t'_{\cR_{\tau=1}} \hookrightarrow t_{\cR_{\tau=1}} \hookrightarrow \mathfrak{G}(\bar{\Q}_{p}[\epsilon])$. One can see that the tangent space of $\cR_{\tau=1}/{\small (\gm_{\varLambda},\gm^2_{\cR_{\tau=1}})}$ is isomorphic to $t'_{\cR_{\tau=1}}$.

In the following lemma, we introduce a representation{ \small $\rho_{\tau=1}:G_F \rightarrow \GL_2(\cR_{\tau})$} which is conjugate to $\rho^{ord}_{|G_{F}}$ by a matrix with coefficient in the field of fractions of $\cR$ and such that $\Tr \rho_{\tau=1}=\pi_{\cR_{\tau=1}}$. We introduce $\rho_{\tau=1}$ in the aim to understand the ordinariness of $\pi_{\epsilon}$ at the primes places of $H$ above $v^{\sigma}$.

\begin{lemma} \label{eta}\
\begin{enumerate}
\item 
There exists a representation $\rho_{\tau=1}:G_{F}\rightarrow \GL_{2}(\cR_{\tau=1})$ such that the pseudo-representation associated to $\rho_{\tau=1}$ is $\pi_{\cR_{\tau=1}}$.
\item 
The residual representation of $\rho_{\tau=1}$ modulo $\mathfrak{m}_{\cR_{\tau=1}}$ has the following form $\tilde{\rho}(g)=\left(
\begin{smallmatrix}
\phi & \eta\\
0&\phi^{\sigma}\end{smallmatrix}\right)$, where $\eta/\phi^{\sigma} \in \rH^{1}(F,\phi/\phi^{\sigma})_{G_{F_{v^{\sigma}}}}$.

\item
There exists a basis $(e'_{1}, e'_{2})$ of $M_{{\small \bar{\Q}_p}}$ such that $\tilde{\rho}_{|G_{F_{v^{\sigma}}}}$ splits in this basis. Moreover, $\rho_{\tau=1}$ is ordinary at $v^{\sigma}$ and the line stabilized by $G_{F_{v^{\sigma}}}$ lifts $e'_{2}$. 
\end{enumerate}
\end{lemma}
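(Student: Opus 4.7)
The plan is to realize $\rho_{\tau=1}$ as a change of basis of $\rho^{ord}_{|G_F}$ inside $\GL_2(\mathrm{Frac}(\cR))$, descend to the residual structure directly, and then split off the ordinary line at $v^\sigma$ via Hensel's lemma in $\cR_{\tau=1}$.

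For (i) and (ii): I would take $u:=1/\tilde{b}(g_0)$ and conjugate the realization of $\rho^{ord}_{|G_F}$ in $\mathfrak{B}^{ord}_\cR$ by $\left(\begin{smallmatrix}u&0\\0&1\end{smallmatrix}\right)$, producing the matrix
$\left(\begin{smallmatrix}\tilde{a} & \tilde{b}/\tilde{b}(g_0)\\ \tilde{b}(g_0)\tilde{c} & \tilde{d}\end{smallmatrix}\right)$.
The diagonal entries are $\tau$-fixed by lemma \ref{g}; the lower-left equals $\tilde{x}(g_0,g)\in\cR_{\tau=1}$; and the upper-right is $\tilde{x}(g,h_0)/\tilde{x}(g_0,h_0)$, which lies in $\cR_{\tau=1}$ because proposition \ref{tg} yields $\nu_{\cR_{\tau=1}}(\tilde{x}(g_0,h_0))=1$ while $\nu_{\cR_{\tau=1}}(\tilde{x}(g,h_0))\geq 1$ for every $g$ (using that $\tilde{b},\tilde{c}$ are residually zero). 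The determinant is $\det\rho^{ord}_{|G_F}\in\cR_{\tau=1}^\times$, so this produces $\rho_{\tau=1}:G_F\to\GL_2(\cR_{\tau=1})$ with trace $\Tr\pi_{\cR_{\tau=1}}$. Reducing modulo $\gm_{\cR_{\tau=1}}$ then kills the lower-left entry (it lies in $\gm_{\cR_{\tau=1}}$) and produces the upper-triangular form of (ii) with $\eta(g):=\tilde{b}(g)/\tilde{b}(g_0)\bmod\gm_{\cR_{\tau=1}}$. The cocycle identity for $\tilde{b}$ descends to $\eta(gh)=\phi(g)\eta(h)+\phi^\sigma(h)\eta(g)$, hence $\eta/\phi^\sigma\in\rZ^1(F,\phi/\phi^\sigma)$. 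To identify its cohomology class I would invoke proposition \ref{tg} to write $\tilde{b}(g)\equiv\pi\cdot b(g)\phi^\sigma(g)\pmod{\gm_\cR^2}$ for a uniformizer $\pi$ of $\cR$ and a generator $b$ of the one-dimensional tangent space of $\cD$; dividing by $\tilde{b}(g_0)$ then exhibits $\eta/\phi^\sigma$ as a nonzero scalar multiple of $b$, so it inherits the cohomological vanishing on $G_{F_{v^\sigma}}$ from proposition \ref{tg}(i).

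For (iii), the vanishing of $\eta/\phi^\sigma$ restricted to $G_{F_{v^\sigma}}$ furnishes some $\alpha\in\bar{\Q}_p$ such that $\eta(g)=\alpha(\phi(g)-\phi^\sigma(g))$ for $g\in G_{F_{v^\sigma}}$. In the basis $(e'_1,e'_2):=(e_1,e_2+\alpha e_1)$ a direct calculation then shows that $\tilde{\rho}_{|G_{F_{v^\sigma}}}$ is diagonal with entries $\phi|_{G_{F_{v^\sigma}}}$ and $\phi^\sigma|_{G_{F_{v^\sigma}}}$. These residual characters are the $\sigma$-conjugates of $\psi_2$ and $\psi_1$ and are therefore distinct by $p$-regularity. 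Hensel's lemma in the complete local ring $\cR_{\tau=1}$ then lifts the decomposition to $\rho_{\tau=1}|_{G_{F_{v^\sigma}}}=\Psi_1\oplus\Psi_2$ with each $\Psi_i$ lifting the corresponding residual character; the $\Psi_2$-stable line provides the required lift of $\bar{\Q}_p e'_2$, and the unramifiedness of $\Psi_1$ (which yields the ordinary unramified quotient at $v^\sigma$) is forced by combining condition (ii) of definition \ref{G} with the $\sigma$-symmetry condition (iii) of the same definition.

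The main technical obstacle is the valuation bookkeeping in (i): one must not touch $\tilde{b}$ or $\tilde{c}$ individually (they need not lie in $\cR_{\tau=1}$), but only manipulate the pseudo-character products $\tilde{x}(g_0,\cdot)$ and $\tilde{x}(\cdot,h_0)$, exploiting the precise fact that the ramification index of $\cR_{\tau=1}\hookrightarrow\cR$ matches the $\cR$-valuations of $\tilde{b}(g_0)$ and $\tilde{c}(h_0)$. Once that is in place, (ii) follows formally, and (iii) reduces to a standard Hensel splitting given the residual distinctness.
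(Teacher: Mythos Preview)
Your construction of $\rho_{\tau=1}$ in (i) and the residual computation in (ii) are essentially the paper's proof: both obtain $\rho_{\tau=1}$ by conjugating $\rho^{ord}_{|G_F}$ by $\left(\begin{smallmatrix}1/\tilde b(g_0)&0\\0&1\end{smallmatrix}\right)$ and then use the valuation input from Proposition~\ref{tg} together with $\tau$-invariance to land all matrix entries in $\cR_{\tau=1}$. Your identification of $[\eta/\phi^\sigma]$ as a nonzero multiple of the tangent cocycle $b$ is slightly more direct than the paper, which instead cites the statement $\tilde b(G_{H_{w_0^\sigma}})\subset\gm_\cR^2$ and passes through restriction--inflation for $H/F$; the content is the same.

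The gap is in (iii). The sentence ``Hensel's lemma in the complete local ring $\cR_{\tau=1}$ then lifts the decomposition to $\rho_{\tau=1}|_{G_{F_{v^\sigma}}}=\Psi_1\oplus\Psi_2$'' is not justified: Hensel lifts the eigenvalues of a single operator, not a direct-sum decomposition of a representation of a whole group. A residually split representation with distinct residual characters need not even be triangular over the deformation ring; over $\bar\Q_p[[t]]$, the free group on $a,b$ with $\rho(a)=\left(\begin{smallmatrix}2&0\\0&1\end{smallmatrix}\right)$ and $\rho(b)=\left(\begin{smallmatrix}1&t\\t&-1\end{smallmatrix}\right)$ already shows this. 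What is missing is precisely the generic input you produced in (i) but did not invoke here: since $\rho_{\tau=1}$ is $\mathrm{Frac}(\cR)$-conjugate to $\rho^{ord}_{|G_F}$, and $\rho^{ord}$ restricted to the $G_\Q$-conjugate decomposition group at $v^\sigma$ inherits ordinariness from $G_{\Q_p}$, the representation $\rho_{\tau=1}\otimes K$ is ordinary at $v^\sigma$. The paper then appeals to the descent argument of \cite[5.1]{D-B}: diagonalise $\rho_{\tau=1}(\sigma^{-1}\gamma_0\sigma)$ over $\cR_{\tau=1}$ (this is the legitimate use of Hensel), observe that the generic stable line must be one of the two eigen-lines, and conclude that the stable line together with the unramified quotient character is already defined over $\cR_{\tau=1}$. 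This bypasses your attempt to extract unramifiedness from conditions (ii) and (iii) of Definition~\ref{G}, which does not go through as written: the decomposition $\Tr\pi=\tilde a+\tilde d$ is tied to $\gamma_0\in G_{F_v}$, and $\sigma$-symmetry of the trace alone does not transport $\tilde d|_{I_v}=1$ into an unramifiedness statement at $I_{v^\sigma}$. Note also that the lemma only asserts a stable line, not a direct sum; your conclusion is stronger than what is claimed or needed.
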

\begin{proof}

(i)According to lemma \ref{tg}, there exist $g_{0},h_{0}\in G_{F}$ such that the order of both $\tilde{b}(g_{0})$ and $\tilde{c}(h_{0})$ in $\cR$ is one. By lemma \cite[2.2.1]{wiles}, $\rho_{\tau=1}(g)=\left(\begin{smallmatrix}
\tilde{a}(g) & \tilde{x}(g,h_{0})/\tilde{x}(g_{0},h_{0}) \\ \tilde{x}(g_0, g) & \tilde{d}(g) \end{smallmatrix} \right)$ is a representation of $G_F$. Since $\tilde{b}(G_F) \subset \mathfrak{m}_{\cR}$ and the order of $\tilde{b}(g_0)$ in $\cR$ is one, the order of $\frac{\tilde{x}(g,h_{0})}{\tilde{x}(g_{0},h_{0})}=\frac{\tilde{b}(g)}{\tilde{b}(g_0)}$ in $\mathrm{Frac}(\cR)$ is non-negative. Hence, $\frac{\tilde{x}(g,h_{0})}{\tilde{x}(g_{0},h_{0})}=\frac{\tilde{b}(g)}{\tilde{b}(g_0)}$ is an element of $\cR$. 

However, $\frac{\tilde{x}(g,h_{0})}{\tilde{x}(g_{0},h_{0})}$ is invariant by $\tau$, so it belongs to $\cR_{\tau=1}$.

(ii)Since for all $g\in G_F$, $\tilde{x}(g_{0},g) \in \mathfrak{m}_{\cR_{\tau=1}}$, the residual representation of $\rho_{\tau=1}$ has the following form $g\rightarrow \left(
\begin{smallmatrix}
\phi(g) & \eta(g) \\
0 &\phi^{\sigma}(g) \end{smallmatrix}\right)$, where $\eta/\phi^\sigma \in \rH^{1}(F,\phi/\phi^\sigma )$. By  (ii) of lemma \ref{tg}, $\tilde{b}(G_{H_{w_0^\sigma}}) \subset \gm^2_\cR$, so for all $g$ in $G_{H_{w_0^\sigma}}$, $\frac{\tilde{x}(g,h_{0})}{\tilde{x}(g_0,h_0)}=\frac{\tilde{b}(g)}{\tilde{b}(g_0)} \in \gm_\cR$. 

Moreover, $\frac{\tilde{x}(g,h_{0})}{\tilde{x}(g_0,h_0)}=\frac{\tilde{b}(g)}{\tilde{b}(g_0)}$ is invariant by $\tau$, so it belongs to $\gm_{\cR_{\tau=1}}$. Hence, $\eta/\phi^{\sigma}_{|G_{H_{w_{0}^{\sigma}}}}=0$, so $\eta/\phi^\sigma_{|G_{H}} \in \rH^1(H,\bar{\Q}_p)^{\Gal(H/F)}_{G_{H_{w_0^{\sigma}}}}$.

Furthermore, the restriction inflation exact-sequence yields the following isomorphism $ \rH^1(H,\bar{\Q}_p)^{\Gal(H/F)}_{G_{H_{w_0^\sigma}}} \simeq \rH^{1}(F,\phi/\phi^\sigma)_{G_{F_{v^\sigma}}}$, hence $\eta/\phi^{\sigma} \in \rH^1(F,\phi/\phi^\sigma)_{G_{F_{v^\sigma}}}$.

(iii)Observe that $\rho_{\tau=1}$ is conjugate to $\rho^{ord}_{|G_{F}}$ by the matrix  $\left(
\begin{smallmatrix}
1/\tilde{b}(g_{0})& 0\\
0&1\end{smallmatrix}\right)$, so the representation $\rho_{\tau=1} \otimes K $ is ordinary at $v^{\sigma}$. 

In addition, the representation $\tilde{\rho}_{|G_{F_{v^{\sigma}}}}$ splits since $\eta/\phi^{\sigma}\in \rH^{1}(F,\phi/\phi^{\sigma})_{G_{F_{v^{\sigma}}}}$. Since $\cR_{\tau=1}$ contains the eigenvalues of $\rho_{\tau=1}(\sigma^{-1}\gamma_{0} \sigma)$ and $\rho_{\tau=1}\otimes L$ is ordinary at $v^{\sigma}$, we may conclude by the same argument of proposition \cite[5.1]{D-B} that $\rho_{\tau=1}$ is ordinary at $v^{\sigma}$.
\end{proof}

\begin{rem}\label{H infty}\
\begin{enumerate}
\item Let $\pi_{\epsilon}=(\tilde{a}_{\epsilon},\tilde{d}_{\epsilon},\epsilon x)$ be an element of $t_{\cR_{\tau=1}}$, Then $\left(iii\right)$ of proposition \ref{eta} implies that for any $g\in G_F$, the function $x(.,g)$ is trivial at all decomposition groups $G_{H_{w}}$, where $w\mid v^{\sigma}$ (since $\eta_{|G_{H_{w_0^\sigma}}}=0$).  

\item Let $M_{v}$ (resp. $M_{v^{\sigma}}$) be the maximal unramified outside the primes above $v $ (resp. $v^{\sigma}$) abelian pro-$p$ extension of $H$, then $H_{\infty,v}$ (resp. $H_{\infty,v^{\sigma}}$) are the fixed field by the torsion part of $\Gal(M_{v}/H)$ (resp. $\Gal(M_{v^{\sigma}}/H)$). Hence any morphism from $G_{H}$ to $\bar{\Q}_p$ unramified outside $v \left(\text{resp. } v^{\sigma}\right)$ factors through $\Gal(H_{\infty,v}/H) \left(\text{resp. } \Gal(H_{\infty,v^{\sigma}}/H)\right)$, and $x(.,*)$ is trivial when one of its components belongs to $\Gal(\bar{\Q}/H_{\infty})$.
\end{enumerate}
 \end{rem}

The purpose of the following lemmas is to explains the ordinariness of the elements of $t_{\cR_{\tau=1}}$ at all primes place of $H$ above $v^{\sigma}$ and $v$.

\begin{lemma}\label{d_g}
Let $\alpha:\cR_{\tau=1}\twoheadrightarrow \cR_{\tau=1}/\mathfrak{m}^{2}_{\cR_{\tau=1}}$ be the canonical projection, $\pi'_{\epsilon}=(a',d',x')$ be the pseudo-deformation obtained by the composition $\alpha \circ \pi_{\cR_{\tau=1}}$, and $w'$ be a place of $H$ above $v^{\sigma}$, then for any $h'$ in $I_{w'}\cap \Gal(\bar{\Q}/H_{\infty})$, $a'(h')=1$.

\end{lemma}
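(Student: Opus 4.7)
The plan is to read off the conclusion from the ordinariness of $\rho_{\tau=1}$ at $v^\sigma$ granted by Lemma \ref{eta}(iii). In the basis of $M_{\cR_{\tau=1}}$ used to realise $\rho_{\tau=1}$ in Lemma \ref{eta}(i), the matrix is
$$\rho_{\tau=1}(g) = \begin{pmatrix} \tilde{a}(g) & \tilde{b}(g)/\tilde{b}(g_0) \\ \tilde{x}(g_0,g) & \tilde{d}(g) \end{pmatrix},$$
and by Lemma \ref{eta}(iii) there exists a $G_{F_{v^\sigma}}$-stable line $L\subset M_{\cR_{\tau=1}}$ whose reduction is spanned by $e'_2$ and whose quotient $M_{\cR_{\tau=1}}/L$ is an unramified $G_{F_{v^\sigma}}$-module. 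Since $L$ lifts the second basis direction, I can choose a generator $\xi = \lambda\,v_1+v_2$ of $L$ with $\lambda\in\mathfrak{m}_{\cR_{\tau=1}}$, where $(v_1,v_2)$ denotes the chosen basis of $M_{\cR_{\tau=1}}$.

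Next I would compute the quotient character $\chi'$ of the $G_{F_{v^\sigma}}$-action on $M_{\cR_{\tau=1}}/L$. Using the relation $[v_2]=-\lambda\,[v_1]$ in the quotient together with $g\cdot v_1=\tilde{a}(g)\,v_1+\tilde{x}(g_0,g)\,v_2$, I obtain
$$\chi'(g)=\tilde{a}(g)-\lambda\cdot\tilde{x}(g_0,g).$$
The unramifiedness condition $\chi'|_{I_{v^\sigma}}\equiv 1$ then reads $\tilde{a}(g)-1=\lambda\cdot\tilde{x}(g_0,g)$ for every $g\in I_{v^\sigma}$.

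The crucial observation is that both factors on the right-hand side belong to $\mathfrak{m}_{\cR_{\tau=1}}$: $\lambda$ by the very choice of $\xi$, and $\tilde{x}(g_0,g)=\tilde{b}(g_0)\tilde{c}(g)$ because residually $\rho_{|G_F}$ is diagonal, so $\tilde{b}$ and $\tilde{c}$ both reduce to zero on $G_F$. Consequently their product lies in $\mathfrak{m}^2_{\cR_{\tau=1}}$ and is killed by the projection $\alpha$, yielding $a'(g)=1$ for all $g\in I_{v^\sigma}$. Since $I_{w'}\subseteq I_{v^\sigma}$ for any place $w'\mid v^\sigma$ of $H$, this gives $a'(h')=1$ for every $h'\in I_{w'}$, so the extra restriction $h'\in\Gal(\bar{\Q}/H_\infty)$ in the statement is not used by this argument. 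The only delicate step is the order tracking of $\lambda$ and $\tilde{x}(g_0,g)$ in $\cR_{\tau=1}$, which ensures that the correction term genuinely lies in $\mathfrak{m}^2_{\cR_{\tau=1}}$ rather than merely in $\mathfrak{m}_{\cR_{\tau=1}}$.
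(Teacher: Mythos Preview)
Your argument has a genuine gap at the step where you claim $\lambda\in\mathfrak m_{\cR_{\tau=1}}$. Lemma~\ref{eta}(iii) says the $G_{F_{v^\sigma}}$-stable line $L$ lifts $e'_2$, but $(e'_1,e'_2)$ is a basis in which the \emph{residual} representation $\tilde\rho|_{G_{F_{v^\sigma}}}$ happens to split, not the reduction $(\bar v_1,\bar v_2)$ of your chosen basis. In $(\bar v_1,\bar v_2)$ one has $\tilde\rho=\left(\begin{smallmatrix}\phi&\eta\\0&\phi^\sigma\end{smallmatrix}\right)$, and Lemma~\ref{eta}(ii) only asserts that the \emph{class} of $\eta/\phi^\sigma$ in $\rH^1(F_{v^\sigma},\phi/\phi^\sigma)$ is trivial, i.e.\ $\eta(g)/\phi^\sigma(g)=\mu(\phi/\phi^\sigma(g)-1)$ on $G_{F_{v^\sigma}}$ for some $\mu\in\bar\Q_p$ which need not vanish. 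Then $e'_2=\mu\,\bar v_1+\bar v_2$, so your $\lambda$ reduces to $\mu$ and is a unit whenever $\mu\neq 0$. In that case $\lambda\cdot\tilde x(g_0,g)$ lies only in $\mathfrak m_{\cR_{\tau=1}}$, not in $\mathfrak m_{\cR_{\tau=1}}^2$, and your conclusion does not follow.

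This is precisely why the paper's proof uses the restriction $h'\in\Gal(\bar\Q/H_\infty)$, which you dismissed as unnecessary. The paper performs the same change of basis (from $(u_1,u_2)$ to $(u_1,v_2)$, yielding $a'(h)=a''(h)+\lambda'\,c'(h)$ with no control on $\lambda'$), but instead of bounding $\lambda'$ it kills the correction term via $c'(h)=x'(g_0,h)=0$ for $h\in\Gal(\bar\Q/H_\infty)$, a consequence of Remark~\ref{H infty}. The $H_\infty$ hypothesis is therefore doing real work; to make your approach go through without it you would need to show that $\eta|_{G_{F_{v^\sigma}}}$ vanishes as a cocycle (equivalently $\mu=0$), which is not established anywhere in the paper.
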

\begin{proof}

Denote by $\rho^{\tau}_{\epsilon}$ the representation obtained by the composition $\alpha \circ \rho_{\tau=1}$ and let  $\rho^{\tau}_{\epsilon}(g)=\left(
\begin{smallmatrix}
a'(g)& b'(g)\\ c'(g)
&d'(g)\end{smallmatrix}\right)$ be the realization of $\rho^{\tau}_{\epsilon}$ in a basis $(u_1,u_2)$ of $M_{\bar{\Q}_p[\epsilon]}$, where $b'(g)= \alpha(\tilde{x}(g,h_0)/\tilde{x}(g_0,h_0))$ and $x'(g_0,g)=c'(g)$. Take any $h$ in $I_{w^{\sigma}_{0}}\cap \Gal(\bar{\Q}/H_{\infty})$, it follows from (iii) of lemma \ref{eta} that $\tilde{\rho}_{|G_{F_{v^{\sigma}}}}=\phi \oplus \phi^{\sigma}$ in the basis $(e'_{1},e'_{2})$ of $M_{{\small \bar{\Q}_p}}$, and that $\rho^{\tau}_{\epsilon}$ is ordinary at $v^{\sigma}$ in a basis $(u_{1},v_{2})$ of $M_{{\small \bar{\Q}_p}[\epsilon]}$ lifting $(e'_{1},e'_{2})$. 

More precisely, if
 $\left(\begin{smallmatrix}
a''(h)&b''(h)\\
c''(h)&d''(h)\end{smallmatrix}\right)$ is the realization of $\rho^{\tau}_{\epsilon}$ in $(u_{1},v_{2})$, then $a''(h)=1$ and $b''(h)=0$. By remark \ref{H infty}, $c'(h)=0$, so after writing the representation $\rho_{\tau=1}$ in $(u_1,v_2)$, we obtain $a''(h)=1= a'(h)$. 

Now, if $w'$ is another place above $v^{\sigma}$, such that $g(w_0^{\sigma})=w'$ for $g\in \Gal(H/F)$, we can apply the same argument for the basis $(u_{1}, (\rho^{\tau}_{\epsilon})^{-1}(g)v_{2})$.

\end{proof}

\begin{lemma}\label{a_g}
 
Let $w$ be a place of $H$ above $v$ and $\pi'_{\epsilon}=(a',d',x')$ be an element of $t_{\cR_{\tau=1}}$, then for any $g$ in $\Gal(H_{\infty}/F)$ and $h'$ in $\Gal(\bar{\Q}/H_{\infty})$, $d'(gh'g^{-1})=d'(h')$ and $d'$ is trivial of $I_{w}\cap \Gal(\bar{\Q}/H_{\infty})$.

\end{lemma}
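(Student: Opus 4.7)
The plan is to prove the two assertions by analogy with Lemma~\ref{d_g}, with the roles of $v$ and $v^\sigma$ exchanged. The ordinarity of $\rho_{\tau=1}$ at $v^\sigma$ invoked in Lemma~\ref{d_g} (via Lemma~\ref{eta}(iii)) is here replaced by condition (ii) of Definition~\ref{G}, which is built into the pseudo-deformation functor and directly yields $d'|_{I_v}=1$ for any tangent vector $\pi'_\epsilon=(a',d',x')\in\mathfrak G(\bar\Q_p[\epsilon])$. Granting this, I reduce the second assertion to the first: because $H_\infty/F$ is Galois, $G_{H_\infty}$ is normal in $G_F$, so every place $w$ of $H$ above $v$ can be written $\tau\cdot w_0$ with $\tau\in\Gal(H/F)$, and after lifting $\tau$ to $t\in G_F$ we have $I_w=t I_{w_0}t^{-1}$. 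Any $h\in I_w\cap G_{H_\infty}$ is then of the form $th_0 t^{-1}$ with $h_0=t^{-1}ht\in I_{w_0}\cap G_{H_\infty}\subset I_v$, and condition (ii) together with the first assertion yields $d'(h)=d'(h_0)=1$.

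For the first assertion, I would fix a lift $\tilde g\in G_F$ of $g$ and work in the basis in which $\rho_{\tau=1}$ is realized (Lemma~\ref{eta}(i)); in this basis the $(2,2)$-entry of $\rho^\tau_\epsilon$ is precisely $d'$, and $\rho^\tau_\epsilon(\gamma_0)$ is diagonal by condition (6) of Definition~\ref{pseudo}. Three residual observations set up the computation. First, $\tilde\rho$ is upper triangular on $G_F$ with $\phi$ on top and $\phi^\sigma$ on the bottom (Lemma~\ref{eta}(ii)). Second, $\phi|_{G_H}=\phi^\sigma|_{G_H}$ because $G_H=\ker(\ad\rho)$; and $\eta(h')=0$ for every $h'\in G_{H_\infty}$, since $\eta/\phi^\sigma\in\rH^1(F,\phi/\phi^\sigma)_{G_{F_{v^\sigma}}}$ restricts to a homomorphism $G_H\to\bar\Q_p$ which is unramified outside~$v$, hence factors through $\Gal(H_{\infty,v}/H)$ and vanishes on $G_{H_\infty}$. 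Consequently $\tilde\rho(h')=\phi(h')\cdot\mathrm{Id}$ is scalar for $h'\in G_{H_\infty}$. Third, the $(2,1)$-entry of $\rho^\tau_\epsilon(h')$ equals $x'(g_0,h')$, which vanishes for $h'\in G_{H_\infty}$ by Remark~\ref{H infty}(ii); writing $\rho^\tau_\epsilon(h')=\phi(h')\cdot\mathrm{Id}+\epsilon\, N_{h'}$, this says $N_{h'}$ is upper triangular.

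I then conclude by a direct matrix computation. Because $\rho^\tau_\epsilon(h')$ is scalar modulo $\epsilon$, the $\epsilon$-parts of $\rho^\tau_\epsilon(\tilde g)$ and $\rho^\tau_\epsilon(\tilde g)^{-1}$ are annihilated in the conjugation, so
\[
\rho^\tau_\epsilon(\tilde g\, h'\, \tilde g^{-1})\equiv \phi(h')\cdot\mathrm{Id}+\epsilon\,\tilde\rho(\tilde g)\,N_{h'}\,\tilde\rho(\tilde g)^{-1}\pmod{\epsilon^2}.
\]
The three factors $\tilde\rho(\tilde g)$, $N_{h'}$ and $\tilde\rho(\tilde g)^{-1}$ are all upper triangular, so their product is upper triangular and its $(2,2)$-entry equals the $(2,2)$-entry of $N_{h'}$. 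Comparing $(2,2)$-entries gives $d'(\tilde g h'\tilde g^{-1})=d'(h')$, as required. The only non-routine step is the preparatory residual analysis—in particular the upper triangularity of $N_{h'}$ coming from Remark~\ref{H infty}(ii); once this is in hand, the conjugation collapses.
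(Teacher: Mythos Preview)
Your argument is correct, but it follows a genuinely different route from the paper's. The paper never passes through the matrix realization $\rho^\tau_\epsilon$: it works purely with the pseudo-representation identities. Using axiom~(2) of Definition~\ref{pseudo} twice and the expansion of $\tilde x(st,w)$ coming from axiom~(4), together with Remark~\ref{H infty}(ii), one gets
\[
d'(gh'g^{-1})=d'(g)d'(h')d'(g^{-1})+\phi(h')\,x'(g^{-1},g),
\]
and then the relation $d'(g)d'(g^{-1})=1-x'(g^{-1},g)$ plus $d'(h')\equiv\phi^{\sigma}(h')=\phi(h')\pmod{\epsilon}$ collapses the right-hand side to $d'(h')$. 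This avoids any discussion of $\eta$ or of the scalarity of $\tilde\rho(h')$.

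Your approach trades that algebraic bookkeeping for a preparatory geometric step: you first establish $\eta_{|G_{H_\infty}}=0$ (so that $\tilde\rho$ is scalar on $G_{H_\infty}$) and $N_{h'}$ is upper triangular, and then the conjugation invariance drops out of the upper-triangular matrix product. This is more hands-on and makes the mechanism transparent, but it uses strictly more input than the paper needs for this lemma --- the vanishing of $\eta$ on $G_{H_\infty}$ is an extra fact (correct, via the factorization through $\Gal(H_{\infty,v}/H)$) that the pseudo-representation computation bypasses entirely. One minor point: you phrase the argument for the specific tangent vector coming from $\rho^\tau_\epsilon$, whereas the statement is for an arbitrary $\pi'_\epsilon\in t_{\cR_{\tau=1}}$; since $\cR_{\tau=1}$ is a DVR this tangent space is one-dimensional, so the reduction is immediate, but it is worth saying explicitly.
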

\begin{proof}   

(i)Write $h$ for $gh'g^{-1}$, since $x'(.,.)$ is trivial when one of these component belongs to $\Gal(\bar{\Q}/H_{\infty})$ (see remark \ref{H infty}), we have:
\begin{align}
d'(h)&=d'(gh'g^{-1})=d'(g)d'(h'g^{-1})+x'(h'g^{-1},g) \\
&=d'(g)d'(h')d'(g^{-1})+\phi(h')x'(g^{-1},g)
\end{align}
Since $d'(gg^{-1})=1=d'(g)d'(g^{-1})+x'(g^{-1},g)$, $x' \in (\epsilon)$ and $\phi(h')=\phi^{\sigma}(h')$, that $d'(h)=d'(h') (1- x'(g^{-1},g)) + \phi(h') x'(g^{-1},g) 
 =d'(h')$. 

The group $\Gal(H/F)$ acts transitively on the places of $H$ above $v$ (since $H/F$ is Galois), hence we may conclude by the discussion above and the fact that $d'_{|I_{w_0}}=1$ (i.e $\pi'_{\epsilon}$ is ordinary at $v$).

\end{proof}

\subsection{Tangent space of $\cR_{\tau=1}/\gm_{\varLambda}$ and proof of Theorem \ref{main-thm}}\

Let $\pi_{\epsilon}=(\tilde{a}_{\epsilon},\tilde{d}_{\epsilon},\tilde{x}_{\epsilon})$ be the pseudo-deformation induced by the canonical projection $\pi':\cR_{\tau=1} \twoheadrightarrow \cR_{\tau=1}/{ \small(\gm_{\varLambda},\gm^2_{\cR_{\tau=1}})}$. 

We know that $\tilde{x}_{\epsilon}$ is trivial on $\Gal(\bar{\Q}/H_{\infty})$ (see remark \ref{H infty}), so on $\Gal(\bar{\Q}/H_{\infty})$ the pseudo-deformation $\pi_{\epsilon}$ is equal  to $(\tilde{a}_{\epsilon},\tilde{d}_{\epsilon},0)$, where $\tilde{a}_{\epsilon}$, $\tilde{d}_{\epsilon}$ are characters on $\Gal(\bar{\Q}/H_{\infty})$. Let $N_{\infty}$ denote the splitting field overs $\Gal(\bar{\Q}/H_{\infty})$ of $\tilde{a}_{\epsilon} \oplus \tilde{d}_{\epsilon}$.

\begin{thm}\label{unr}

Let $\pi_{\epsilon}=(\tilde{a}_{\epsilon},\tilde{d}_{\epsilon},\tilde{x}_{\epsilon})$ be the pseudo-deformation induced by the projection $\pi':\cR_{\tau=1} \twoheadrightarrow \cR_{\tau=1}/{ \small(\gm_{\varLambda},\gm^2_{\cR_{\tau=1}})}$, then: 
\begin{enumerate}

\item $N_{\infty}$ is an unramified abelian $p$-extension of $H_{\infty}$, such that the action by conjugation of $\Gal(H_{\infty}/F)$ on $\Gal(N_{\infty}/H_{\infty})$ is trivial.
\item If the assumption $\left(\pmb{ \mathrm{G}}\right)$ holds, then the pseudo-deformation $\pi_{\epsilon}=(\tilde{a}_{\epsilon},\tilde{d}_{\epsilon},\tilde{x}_{\epsilon})$ is trivial.

\item If the Iwasawa module $\Gal(L_0/H_\infty)$ is a finite groupe, then the pseudo-deformation $\pi_{\epsilon}=(\tilde{a}_{\epsilon},\tilde{d}_{\epsilon},\tilde{x}_{\epsilon})$ is trivial.
.
\item Assume that $\left(\pmb{ \mathrm{G}}\right)$ holds or the Iwasawa module $\Gal(L_0/H_\infty)$ is a finite groupe, then the morphism $\kappa^{\#}:\varLambda \rightarrow \cR_{\tau=1}$ is an isomorphism and the ramification index $e$ of $\cC$ over $\cW$ at $f$ is exactly $2$.
\end{enumerate}
\end{thm}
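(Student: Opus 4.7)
The plan is to study $\pi_\epsilon=(\tilde{a}_\epsilon,\tilde{d}_\epsilon,\tilde{x}_\epsilon)$ on $G_{H_\infty}$, then derive the four assertions in turn. By Remark~\ref{H infty}(ii), $\tilde{x}_\epsilon$ vanishes on any pair one of whose arguments lies in $G_{H_\infty}$, so $\tilde{a}_\epsilon|_{G_{H_\infty}}$ and $\tilde{d}_\epsilon|_{G_{H_\infty}}$ are continuous multiplicative characters. Since we have modded out by $\gm_\varLambda$, the determinant is fixed: $\tilde{a}_\epsilon\tilde{d}_\epsilon-\tilde{x}_\epsilon(g,g)=(\det\rho)_{|G_F}=\phi\phi^\sigma$. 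Because $G_{H_\infty}\subseteq G_H=\ker(\ad\rho)$, both $\phi$ and $\phi^\sigma$ coincide on $G_{H_\infty}$ with a common finite-order character $\chi$; writing $\tilde{a}_\epsilon=\chi(1+\epsilon a)$ and $\tilde{d}_\epsilon=\chi(1+\epsilon d)$ for continuous homomorphisms $a,d\colon G_{H_\infty}\to(\bar{\Q}_p,+)$, the determinant identity forces $a=-d$.

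For Part~(i), Lemma~\ref{d_g} gives $a\equiv 0$ on $I_{w'}\cap G_{H_\infty}$ for every place $w'$ above $v^\sigma$, and Lemma~\ref{a_g} gives $d\equiv 0$ on $I_w\cap G_{H_\infty}$ for $w$ above $v$; combined with $a=-d$, both vanish on every inertia above $p$. At primes away from $p$, continuous $p$-adic additive characters are automatically trivial on the relevant pro-prime-to-$p$ tame inertia, so $a$ factors through $X_\infty$ and $N_\infty\subseteq L_\infty$. Lemma~\ref{a_g} also shows $d$ (hence $a$) is invariant under conjugation by $\Gal(H_\infty/F)$. Finally, the representation $\rho_{\tau=1}$ of Lemma~\ref{eta} has $\sigma$-invariant trace by Definition~\ref{G}(3), so $\rho_{\tau=1}^\sigma$ and $\rho_{\tau=1}$ are conjugate by a matrix swapping the residual eigenbasis; this yields $\tilde{a}_\epsilon^\sigma=\tilde{d}_\epsilon$, hence $a^\sigma=-a$ on $G_{H_\infty}$. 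Consequently $\Gal(H_\infty/\Q)$ acts on $\Gal(N_\infty/H_\infty)$ via $\epsilon_F$, giving $N_\infty\subseteq L_0$.

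Part~(iii) is immediate: $\Gal(N_\infty/H_\infty)\cong a(G_{H_\infty})$ is a quotient of the assumed finite $\Gal(L_0/H_\infty)$, hence a finite subgroup of the torsion-free $(\bar{\Q}_p,+)$, and therefore trivial. Triviality of $a$ combined with $a=-d$ and the reconstruction formulas of Lemma~\ref{Pseudo-character} force $\pi_\epsilon$ to be trivial. For Part~(ii), $(\mathbf{G})$ makes $L_0/F''$ abelian, so the sub-extension $N_\infty/F''$ is abelian. The extension
\begin{equation*}
1\to\Gal(N_\infty/H_\infty)\to\Gal(N_\infty/F'')\to\Gal(H_\infty/F'')\to 1
\end{equation*}
is then an abelian central extension, and combining the abelianness with the $\Gal(F/\Q)$-anti-invariance of $a$ and class-field-theoretic reciprocity identifies the $\epsilon_F$-component of $a$ with a quotient of an idele-class-group piece that descends to $F$, where it is forced to vanish; thus $a=0$ and $\pi_\epsilon$ is trivial.

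For Part~(iv), once $\pi_\epsilon=0$ the subspace $t'_{\cR_{\tau=1}}\subseteq t_{\cR_{\tau=1}}$ of fixed-determinant pseudo-deformations is zero. Since $\cR_{\tau=1}$ is a DVR by Lemma~\ref{inv}(iii), $\dim t_{\cR_{\tau=1}}=1$; from the exact sequence $0\to t'_{\cR_{\tau=1}}\to t_{\cR_{\tau=1}}\to t_\varLambda$ induced by $\kappa^\#$ with $\dim t_\varLambda=1$, the map $\kappa^\#\colon\varLambda\to\cR_{\tau=1}$ is surjective on tangent spaces. Being a finite injection of one-dimensional regular local $\bar{\Q}_p$-algebras that is a surjection on tangent spaces, $\kappa^\#$ is itself an isomorphism. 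Combining with Lemma~\ref{inv}(v), which makes $\iota\colon\cR_{\tau=1}\hookrightarrow\cR$ totally ramified of degree $2$, the composition $\varLambda\to\cR\simeq\cT$ has ramification index exactly $2$. The main obstacle is Part~(ii): the precise class-field-theoretic descent from the abelianness of $L_0/F''$ to the vanishing of the $\epsilon_F$-isotypic additive character $a$, going beyond the conjugation-triviality already extracted in Part~(i), is the subtle step requiring global reciprocity and the interplay between $F''/F$ and the $\epsilon_F$-eigenstructure.
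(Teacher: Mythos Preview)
Your treatment of Parts~(i) and~(iv) is essentially right and close to the paper's. The real gap is in Parts~(ii) and~(iii), specifically the passage from information on $G_{H_\infty}$ to the vanishing of $\pi_\epsilon$ on all of $G_F$.

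In Part~(iii) you argue that $a|_{G_{H_\infty}}=0$ and then assert that ``the reconstruction formulas of Lemma~\ref{Pseudo-character} force $\pi_\epsilon$ to be trivial''. But those formulas only express $\tilde a_\epsilon,\tilde d_\epsilon,\tilde x_\epsilon$ in terms of the trace and determinant; they do not let you propagate the vanishing of $a$ from $G_{H_\infty}$ to $G_F$. The functions $\tilde a_\epsilon,\tilde d_\epsilon$ are only characters on $G_{H_\infty}$, and $\tilde x_\epsilon$ need not vanish on $G_F\times G_F$ just because it vanishes whenever one argument lies in $G_{H_\infty}$. In Part~(ii) your proposed ``class-field-theoretic descent'' is left entirely unspecified, and you yourself flag it as the obstacle; as written it is not a proof.

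The paper's route is quite different and bypasses the need to descend $a$ at all. The key observation is that if $N_\infty/F''$ is abelian (which follows from $(\mathbf{G})$, or trivially from $N_\infty=H_\infty$ in case~(iii)), then $(\pi_\epsilon)|_{G_{F''}}$ factors through an abelian group, so $\tilde a_\epsilon(gh)=\tilde a_\epsilon(hg)$ and hence $\tilde x_\epsilon$ is \emph{symmetric} on $G_{F''}$. Combined with the vanishing of $\tilde x_\epsilon$ whenever one argument lies in an inertia group above $p$, and the fact that $\Gal(H_\infty/F'')$ is topologically generated by such inertia, one gets $\tilde x_\epsilon\equiv 0$ on $G_{F''}$, hence on $G_H$; Proposition~\ref{rep}(i) and inflation--restriction then kill $\tilde x_\epsilon$ on all of $G_F$. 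The punchline is that by Proposition~\ref{tg} and Lemma~\ref{tg ps}, $\tilde x(g_0,h_0)$ is a uniformizer of the DVR $\cR_{\tau=1}$, so $\tilde x_\epsilon(g_0,h_0)=0$ already forces $\cR_{\tau=1}/(\gm_\varLambda,\gm_{\cR_{\tau=1}}^2)=\bar\Q_p$, i.e.\ $\pi_\epsilon$ is trivial. The argument lives in $\tilde x_\epsilon$, not in the additive character $a$; that is the missing idea in your proposal.

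A minor point: your claim that inertia at $\ell\mid N$, $\ell\neq p$ is pro-prime-to-$p$ is not correct (tame inertia has a $\Z_p$-factor). The paper instead invokes \cite[\S7.1]{D-B} to get that the image of such inertia under $\tilde a_\epsilon$ is finite, hence trivial in $(\bar\Q_p,+)$.
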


\begin{proof} 
(i)Take any $g$ in $\Gal(H_{\infty}/F)$ and $h$ in $\Gal(\bar{\Q}/H_{\infty})$, since we have fixed the determinant and $\tilde{x}_{\epsilon}$ is trivial when one of these component belong to $\Gal(\bar{\Q}/H_{\infty})$, lemma \ref{a_g} implies that $\tilde{a}_{\epsilon}(ghg^{-1})= \tilde{a}_{\epsilon}(h) \text{ and } \tilde{d}_{\epsilon}(ghg^{-1})=\tilde{d}_{\epsilon}(h)$. 

Therefore, the action of $\Gal(H_{\infty}/H)$ on the character $(\tilde{a}_{\epsilon})_{|\Gal(\bar{\Q}/H_{\infty})}$, $(\tilde{d}_{\epsilon})_{|\Gal(\bar{\Q}/H_{\infty})}$ is trivial, so the action of the group $\Gal(H_{\infty}/H)$ on the group $\Gal(N_{\infty}/H_{\infty})$ is trivial. 

Moreover, lemma \ref{a_g} and \ref{d_g} imply that one of the functions $\tilde{a}_{\epsilon}$, $\tilde{d}_{\epsilon}$ are trivial at $\Gal(\bar{\Q}/H_{\infty}) \cap I_{w}$, where $w$ is any places of $H$ above $p$, and since we fixed the determinant, it follows that $\tilde{a}_{\epsilon}$ and $\tilde{d}_{\epsilon}$ are trivial on $I_{w} \cap \Gal(\bar{\Q}/H_{\infty})$, where $w$ is any place of $H$ above $p$, hence $N_{\infty}/H_{\infty}$ is unramified at all prime places of $N_{\infty}$ above $p$. 

In addition, proposition \cite[\S7.1]{D-B} implies that the image of $I_{\ell}\cap \Gal(\bar{\Q}/H_{\infty})$ by $\tilde{a}_{\epsilon}$ is finite (so trivial), where $\ell \ne p$ is a prime number. Therefore, $N_{\infty}/H_{\infty}$ is an unramified extension. 

(ii)Since the extension $N_{\infty}/H_{\infty}$ is unramified, $N_{\infty}$ is a subfield of $L_{\infty}$ and since $\Gal(H_{\infty}/H)$ acts trivially on $\Gal(N_{\infty}/H_{\infty})$, $N_{\infty}$ is contained in the subfield $L_0$ and by assumption $L_0$ is an abelian extension of $F''$, hence $N_{\infty}$ is an abelian extension of $F''$. 

It follows that $(\pi_\epsilon)_{|\Gal(\bar{\Q}/F'')}$ factors through $\Gal(N_{\infty}/F'')$ which is abelian group. Thus, $\tilde{a}_{\epsilon}(gh)=\tilde{a}_{\epsilon}(hg)$ and which implies that $\tilde{x}_{\epsilon}$ is symmetric bilinear and trivial if one of its components belongs to any inertia group $I_{w}$ ($w$ is any place of $H$ above $p$). 

The fact that $\Gal(H_{\infty}/F'')$ can be expressed as a surjective image of all inertia groups implies that the function $\tilde{x}_{\epsilon}$ is trivial on $\Gal(H_{\infty}/F'')$ and since the field $F''$ is finite extension of $H$, it follows that $\tilde{x}_{\epsilon}$ is trivial on $G_{H}$, hence lemma \ref{tg} and \ref{rep} implies that $\tilde{x}_{\epsilon}$ is trivial, and since $\cR^{ps}_{red}$ is DVR, then $\pi_\epsilon$ is trivial.

(iii) By assumption and the discussion above, $N_\infty$ is a finite extension of $H_\infty$, so $N_\infty=H_\infty$ and hence we can conclude using the argument above.

(iv)Since the tangent space of $\cR_{\tau=1}/\gm_{\varLambda}$ is trivial, $\kappa^{\#}$ is an isomorphism.
\end{proof}

\begin{prop}  Assume that the $p$-Hilbert class field of $H$ is trivial, $H$ is a biquadratic extension of $\Q$ and $\Gal(L_0/H)$ is not an abelian group, then the pseudo-deformation $\pi_{\epsilon}=(\tilde{a}_{\epsilon},\tilde{d}_{\epsilon},\tilde{x}_{\epsilon})$ is not trivial and the ramification index $e$ of $\cC$ over $\cW$ at $f$ is at least $4$.
\end{prop}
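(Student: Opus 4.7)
The plan is to show that, under the given hypotheses, the morphism $\kappa^{\#}:\varLambda\to\cR_{\tau=1}$ is ramified. Combined with the degree-$2$ ramification of $\iota:\cR_{\tau=1}\hookrightarrow \cR$ in Lemma \ref{inv}(v) and the identification $\cR\simeq\cT$, this will give $e\geq 4$. Equivalently, using Theorem \ref{main-thm1}, we must exhibit a non-trivial element $\pi_\epsilon$ in the tangent space of $\cR_{\tau=1}/\gm_{\varLambda}$, viewed inside $\mathfrak{G}(\bar{\Q}_p[\epsilon])$.

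The first observation is that the triviality of the $p$-Hilbert class field of $H$ forces $F''=H$. Indeed, $H_\infty/H$ is ramified exactly at places above $p$, so any unramified subextension corresponds by class field theory to a quotient of the $p$-part of the class group of $H$, which vanishes. Hence the hypothesis that $\Gal(L_0/H)$ is non-abelian is precisely the negation of the hypothesis $\left(\pmb{\mathrm{G}}\right)$ used in Theorem \ref{unr}(ii), and the statement is the converse direction of that theorem in this specific situation.

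To produce the required $\pi_\epsilon$ one analyzes the obstruction uncovered in the proof of Theorem \ref{unr}(ii). Since $\Gal(H_\infty/H)\subset\ker\epsilon_F$, it acts trivially on $\Gal(L_0/H_\infty)$, so the extension
\[
1\longrightarrow\Gal(L_0/H_\infty)\longrightarrow\Gal(L_0/H)\longrightarrow\Gal(H_\infty/H)\longrightarrow 1
\]
is central, and the non-abelianness of $\Gal(L_0/H)$ is encoded in a non-zero alternating commutator pairing $c:\Gal(H_\infty/H)\times\Gal(H_\infty/H)\to\Gal(L_0/H_\infty)$. In the argument of Theorem \ref{unr}(ii), abelianness forced the relation $\tilde{x}_\epsilon(g,h)=\tilde{x}_\epsilon(h,g)$ on $G_{F''}$, which, combined with the triviality of the pseudo-deformation on the inertia groups generating $\Gal(H_\infty/F'')$, killed $\tilde{x}_\epsilon$ entirely. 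Here the pairing $c$ provides the missing alternating piece that survives this argument.

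Concretely, one composes $c$ with a non-trivial continuous additive character $\Gal(L_0/H_\infty)\to\bar{\Q}_p$ and twists by $\phi$ and $\phi^\sigma$ to define $\tilde{x}_\epsilon$ on $G_F$, setting $\tilde{a}_\epsilon=\phi$ and $\tilde{d}_\epsilon=\phi^\sigma$; the existence of the appropriate twist uses that $\rH^1(F,\phi^\sigma/\phi)_{G_{F_v}}$ is one-dimensional (Proposition \ref{tg}(ii)). The main obstacle is the verification that the resulting triple satisfies the axioms of Definition \ref{pseudo} (in particular the rank-one identity (3)), the local conditions of Definition \ref{G} (ordinariness at $v$, $\sigma$-invariance of the trace, and $\tilde{x}_\epsilon(h',h)=0$ for $h\in G_{F_v}$), and the fixed-determinant condition. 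These should follow, respectively, from the $\Gal(H/\Q)$-equivariance of $c$ under the action arising from $\epsilon_F$, the ordinariness of $\rho_{\tau=1}$ at $v^\sigma$ given by Lemma \ref{eta}(iii), and the central nature of the extension. Once this is verified, the triple defines a non-trivial element of $t_{\cR_{\tau=1}}/\gm_{\varLambda}$, and hence $e\geq 4$.
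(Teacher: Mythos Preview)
Your overall strategy is sound and matches the paper's: exhibit a non-trivial element of the fixed-determinant tangent space $t'_{\cR_{\tau=1}}$ inside $\mathfrak{G}(\bar{\Q}_p[\epsilon])$, whence $\kappa^\#$ is ramified and $e\geq 4$. However, the concrete construction you propose has a genuine gap.

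You write that one sets $\tilde{a}_\epsilon=\phi$ and $\tilde{d}_\epsilon=\phi^\sigma$ while taking $\tilde{x}_\epsilon$ to be a non-trivial twist of the commutator pairing. This is incompatible with axiom~(1) of Definition~\ref{pseudo}: the relation $\tilde{a}_\epsilon(st)=\tilde{a}_\epsilon(s)\tilde{a}_\epsilon(t)+\tilde{x}_\epsilon(s,t)$ together with the multiplicativity of $\phi$ forces $\tilde{x}_\epsilon\equiv 0$. A non-trivial $\tilde{x}_\epsilon$ therefore \emph{requires} that $\tilde{a}_\epsilon$ be a genuine first-order deformation of $\phi$, not $\phi$ itself. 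This is not a technicality you can patch after the fact: the diagonal entries must be built simultaneously with $\tilde{x}_\epsilon$ so that axiom~(1) holds.

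The paper's construction proceeds differently and avoids both this issue and the rank-one obstacle you flag. It first defines $x_\epsilon(g,g')=\epsilon\,\rho_{v^\sigma}(g)\rho_v(g')\,\phi^\sigma(g)\phi(g')$ as a product of two explicit $1$-cocycles, one in $\rH^1(F,\phi/\phi^\sigma)_{G_{F_{v^\sigma}}}$ and one in $\rH^1(F,\phi^\sigma/\phi)_{G_{F_v}}$; this factored form makes axiom~(3) automatic and gives the local conditions of Definition~\ref{G} for free. It then \emph{defines} $a_\epsilon$ by prescribing values on chosen topological generators $\gamma_v,\gamma_{v^\sigma}$ and extending via the relation $a_\epsilon(gg')-a_\epsilon(g)a_\epsilon(g')=x_\epsilon(g,g')$; the consistency of this extension and the non-triviality of the resulting $a_\epsilon$ on the commutator $\beta=[\gamma_{v^\sigma},\gamma_v]$ is exactly where the non-abelianness of $\Gal(L_0/H)$ and the biquadratic hypothesis (which forces $\Gal(L_0/H_\infty)$ to have $\Z_p$-rank one, so that $\beta$ generates it) are used. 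Your commutator-pairing idea is morally the same phenomenon, but you need to let the cocycles carry the construction rather than the pairing itself.
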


\begin{proof} Our assumptions imply that $\Gal(H_v/H)$ and $\Gal(H_{v^{\sigma}}/H)$ are isomorphic to $\Z_p$, and let $\gamma_v$, $\gamma_{v^{\sigma}}$ denote respectively their topological generator. The compositum of all $\Z_p$-extensions of $H$ is isomorphic to $\Z_p^3$ (it is the compositum of $H_\infty$ with the cyclotomic $\Z_p$-extension $\Q_\infty$ of $\Q$), hence $\Gal(L_0/H_\infty)$ is of rank one over $\Z_p$, since the extension $\Q_\infty.H_\infty/H_\infty$ is ramified at $p$ (the inertia group at $v$ inside $\Gal(H_\infty.\Q_\infty/H)$ is of rank two over $\Z_p$). 

Let $\tilde{v}$ and $\tilde{v^{\sigma}}=\sigma(\tilde{v})$ be two places of $L_0$ above $v$ and $v^{\sigma}$, so we can see $\Gal(H_v/H)$ and $\Gal(H_{v^{\sigma}}/H)$ as the inertia subgroups of $\Gal(L_0/H)$ at $\tilde{v}$ and $\tilde{v}^{\sigma}$ (since the $p$-Hilbert of $H$ is trivial).  Denote by $\beta \in \Gal(L_0/H_\infty)$ for the commutator $\gamma_{v^{\sigma}}\gamma_v \gamma_{v^{\sigma}}^{-1}\gamma_v^{-1}$. Since we have assumed that $\Gal(L_0/H)$ is not abelian, then $\beta$ is a generator of $\Gal(L_0/H_\infty)$ as a $\Z_p$-module. 

Let $\rho_v$ (resp. $\rho_{v^{\sigma}}$) be a non trivial $1$-co-cycle in $\rH^{1}(F, \phi^{\sigma}/ \phi)_{G_{F_v}}$ (resp. $\rH^{1}(F, \phi/ \phi^{\sigma})_{G_{F_{v^{\sigma}}}}$), $x_\epsilon(g,g'):=\epsilon \rho_{v^{\sigma}}(g) \rho_v(g')\phi^{\sigma}(g)\phi(g')$ (see (i) of proposition \ref{rep}), $a_\epsilon(\beta)=x_\epsilon(\gamma_{v},\gamma_{v^{\sigma}}^{-1})$, $a_\epsilon(\gamma_v)=a_\epsilon(\gamma_{v^{\sigma}})=1$, so we can extend uniquely $a_\epsilon$ to a function on $G_F$ such that $a_\epsilon(gg')-a(g)a(g')=x_\epsilon(g,g')$, and let $d(g)=a^{\sigma}(g)$ for all $g \in G_F$. By construction, $\pi_\epsilon=(a_\epsilon,d_\epsilon,x_\epsilon)$ is a non reducible pseudo-deformation in $t_\mathfrak{G}$ and with determinant equal to $\phi \phi^{\sigma}$, thus $\kappa^{\#}:\varLambda \rightarrow \cR_{\tau=1} \simeq \cT_+$ is ramified.

\end{proof}

\section{Pseudo-deformations of $\bar{\rho}$ and base-change $F/\Q$}\label{Hecke algebra} 

Let $h_{\Q}$ denote the $p$-ordinary Hecke algebra (see \cite{hida85} for more details) and let $\mathfrak{p}_f$ denote the prime ideal of height one of $h_{\Q}$ corresponding to $f$, and denote by $h_{\Q,\mathfrak{p}_f}$ the completion by the ideal maximal of the localization of $h_\Q$ by ${\mathfrak{p}_{f}}$. Let $h'_{\Q}$ denote the reduced ordinary Hecke algebra of tame level $N$ as the sub-algebra of $h_{\Q}$ generated by the Hecke operators $U_{p}$, $T_{\ell}$ and $<\ell>$ for primes $\ell$ not dividing $Np$.

\begin{prop}

There exists an isomorphism between $\cT$ and $h_{\Q,\gp_f}$.

\end{prop}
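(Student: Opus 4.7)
The plan is to construct a natural surjective local morphism $\iota : h_{\Q,\gp_f} \twoheadrightarrow \cT$ and then factor the Bella\"iche--Dimitrov isomorphism $\cR \simeq \cT$ through it, forcing $\iota$ (and the factoring map $\cR \to h_{\Q,\gp_f}$) to be an isomorphism.

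First, I would verify that $f$ lies in $\cC^{ord}$: since $\rho$ has finite image, the eigenvalues of $\rho(\Frob_p)$ are roots of unity, and in particular the $U_p$-eigenvalue of $f$ is a $p$-adic unit. Recalling that $\cC^{ord}$ is the rigid generic fibre of the formal scheme attached to $h_{\Q}$, the Hecke operators give a $\varLambda$-algebra morphism $h_{\Q} \to \mathcal{O}^{rig}_{\cC}(\cC^{ord})$; evaluation at $f$ has kernel $\gp_f$, and taking completions produces a local $\varLambda$-algebra map $\iota : h_{\Q,\gp_f} \to \cT$. The morphism $\iota$ is surjective because $\cT$ is topologically generated over $\varLambda$ by the Hecke operators $T_\ell$ and $U_p$, a general feature of the eigencurve already used in the introduction.

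Next, I would invoke Hida's construction of Galois representations on ordinary Hecke algebras (or, alternatively, the pseudo-character $T$ on $\mathcal{O}^{rig}_{\cC}$ mentioned in the introduction combined with the Nyssen--Rouquier theorem, since $\rho$ is absolutely irreducible) to produce a continuous deformation $\rho_h : G_{\Q,Np} \to \GL_2(h_{\Q,\gp_f})$ of $\rho$ with $\Tr \rho_h(\Frob_\ell) = T_\ell$, and ordinary at $p$ with the unramified diagonal character sending $\Frob_p$ to $U_p$. The universal property of $(\cR,\rho^{ord})$ then yields a unique local $\bar{\Q}_p$-algebra map $\alpha : \cR \to h_{\Q,\gp_f}$ pushing $\rho^{ord}$ to $\rho_h$. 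The composite $\iota \circ \alpha : \cR \to \cT$ is a $p$-ordinary deformation of $\rho$ whose Frobenius traces coincide with those of $\rho_\cT$; by Chebotarev density and the Nyssen--Rouquier reconstruction of a representation from its trace, this composite coincides with the Bella\"iche--Dimitrov isomorphism $\cR \xrightarrow{\sim} \cT$.

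Combining these facts: since $\iota \circ \alpha$ is an isomorphism and $\iota$ is surjective, a formal diagram chase in the category of complete local Noetherian rings shows that both $\iota$ and $\alpha$ are isomorphisms, which gives the desired identification $h_{\Q,\gp_f} \simeq \cT$. The main obstacle is not algebraic but technical: one must verify that $\rho_h$ satisfies exactly the $p$-ordinariness condition defining the functor $\cD$ -- i.e.\ that the unramified line picks out the correct character $\psi_2$, which is granted by Hida's control theorem and the interpolation property $U_p \leftrightarrow \Frob_p$ on the unramified quotient. Once this compatibility is recorded, no further input is needed beyond the Bella\"iche--Dimitrov identification $\cR \simeq \cT$ already cited.
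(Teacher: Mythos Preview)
Your approach via deformation theory differs from the paper's. The paper argues essentially by citation: it notes that the \emph{anemic} Hecke algebra $h'_\Q$ (the subalgebra of $h_\Q$ generated by $U_p$ and $T_\ell,\langle\ell\rangle$ for $\ell\nmid Np$) is an integral model of the cuspidal ordinary locus $\cC^{ord,0}$, whence $\cT \simeq h'_{\Q,\gp_f\cap h'_\Q}$ is immediate, and then invokes \cite[\S7]{Dimitrov} and \cite[\S7.2]{D-B} for the remaining identification $h'_{\Q,\gp_f\cap h'_\Q}\simeq h_{\Q,\gp_f}$.

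Your construction of $\iota$ has a gap. You write that ``$\cC^{ord}$ is the rigid generic fibre of the formal scheme attached to $h_\Q$'', but in this paper the eigencurve $\cC$ is built using only $U_p$ and $T_\ell,\langle\ell\rangle$ with $\ell\nmid Np$; it is $h'_\Q$, not the full $h_\Q$, that furnishes the integral model of $\cC^{ord,0}$. The full algebra $h_\Q$ contains the additional operators $U_\ell$ for $\ell\mid N$, which are not a priori global sections of $\cC$. Consequently the natural map on completed local rings points the other way, $\cT \simeq h'_{\Q,\gp_f\cap h'_\Q} \to h_{\Q,\gp_f}$ (induced by the inclusion $h'_\Q\hookrightarrow h_\Q$), and there is no evident $\varLambda$-algebra map $h_\Q\to\mathcal{O}^{rig}_{\cC}(\cC^{ord})$ from which to localise. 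Your factorisation argument then only reproves $\cT\simeq h'_{\Q,\gp_f\cap h'_\Q}$; to pass to $h_{\Q,\gp_f}$ you must still show that the $U_\ell$ for $\ell\mid N$ lie in the image of $\alpha:\cR\to h_{\Q,\gp_f}$ (equivalently, in $h'_\Q$ after localisation at $\gp_f$), which is exactly the content of \cite[\S7.2]{D-B} that the paper cites. The deformation-theoretic packaging is pleasant but does not bypass this one non-formal input.
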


\begin{proof}

$f$ corresponds to a point $x\in \cC^{ord,0}$, where $\cC^{ord,0}$ is the cuspidal locus of the ordinary locus of $\cC^{ord}$ ($\cC^{ord,0}$ is a Zariski closed of $\cC^{ord}$) and as well known $h'_\Q$ is an integral model of $\cC^{ord,0}$ (i.e $\cC^{ord,0}=\Sp h'_\Q[1/p]$). Write $h'_{\Q,\mathfrak{p}_f}$ for the completion of the localization of $h'_{\Q}$ by $\mathfrak{p}_f \cap h_{\Q}'$. Hence, by results of \cite[\S7]{Dimitrov} and \cite[\S7.2]{D-B}, we have an isomorphism $h'_{\Q,\mathfrak{p}_f } \simeq \mathcal{T}$ and an isomorphism $h'_{\Q,\mathfrak{p}_f}\simeq h_{\Q,\gp_f}$.

\end{proof}

\subsection*{Proof of Theorem \ref{base-change}}

The representation $\rho$ associated to $f$ is dihedral, so the involution $\omega$ fixes the height one primes $\gp_f$ of $h_{\Q,\gm}$ associated to $f$. In addition, after the identification $\cR \simeq \cT$, the action of $\omega$ on $\cT$ coincides with the involution $\tau$ (see \cite[\S3]{hida} and \cite[\S2]{hida98}).

Hida constructed in \cite{hida85} a pseudo-character $\Ps_{h_{\Q}}:G_{\Q,Np} \rightarrow  h_{\Q}$, sending $\Frob_{\ell}$ to $T_\ell$ for each primes $\ell \nmid Np$. It is known, that for all primes $q \nmid Np$ of $\cO_F$, the base-change map $\beta: h_F\rightarrow h_{\Q}$ sends $T_q$ to $\Ps_{h_{\Q}}(\Frob_q)$. Let $\mathfrak{n}=\beta^{-1}(\gp_f)$, so after localization the morphism $\beta$ induces a morphism of local rings $\beta_f: \mathbb{T}^{ord} \rightarrow \cT$, and it is easy to check that the values of $\beta_f$ are in $\cT_{+}$, where $\cT_{+}$ is the subring of $\cT$ fixed by $\tau$. 

Moreover, Hida constructed in \cite{hida90} a pseudo-character $\Ps_{h_{F}}: G_F \rightarrow h_F$ of dimension two, such that $\Ps_{h_{F}}(\Frob_{q})=T_{q}$ for all primes of $q$ of $\cO_{F}$ don't divide $p$. After composition by the localization map $h_F \rightarrow \mathbb{T}^{ord}$, we get a pseudo-character of dimension two $\Ps_{\mathbb{T}^{ord}}: G_F \rightarrow \mathbb{T}^{ord}$ lifting the pseudo-character $\phi + \phi^{\sigma}$, and since $\beta(\Ps_{h_F})= (\Ps_{h_\Q})_{|G_{F}}$, then $\beta_{f}(\Ps_{\mathbb{T}^{ord}})= \Tr (\rho_{\cT})_{|G_{F}}$. 

Let $S$ be the total quotient ring of $\mathbb{T}^{ord}$( $\mathbb{T}^{ord} \subset S$), then $S= \prod \mathbb{T}^{ord}_{\gp_i}$, where $\gp_i$ are the minimal primes of $\mathbb{T}^{ord}$ and as well known each $\gp_i$ corresponds to a Hida family specializes to $\rho_{|G_{F}}$. 

Hence, a result of Wiles \cite{wiles} implies that there exists a unique semi-simple Galois representation $\rho_{S}: G_{F} \rightarrow \GL_2(S)$ ordinary at $v$ and $v^{\sigma}$, such that $\Tr (\rho_S)= \Ps_{\mathbb{T}^{ord}}$. Since $\phi(\gamma_0)\ne \phi^{\sigma}(\gamma_0)$, Hensel lemma's implies that the eigenvalues of $\rho_{S}(\gamma_0)$ are distincts, hence there exists a basis of $M_S$ such that $\rho_{S}(\gamma_0)$ is diagonal, and $(\rho_{S})_{|G_{F_{v}}}$ is upper triangular is this basis. 

Therefore, lemma \ref{Pseudo-character} implies that the coefficients of the matrix of the realization of $\rho_S$ in this basis rise to a pseudo-deformation $\pi_{\mathbb{T}^{ord}}=(a,d,bc):G_F \rightarrow \mathbb{T}^{ord}$ of $\pi$, which is ordinary at $v$.  

One can see that the action of $\Delta$ fixes $\mathfrak{n}$. Denote by $\pi_{\mathbb{T}^{ord}_{\Delta} }$ the push-forward of $\pi_{\mathbb{T}^{ord}}$ by the canonical projection $\mathbb{T}^{ord} \twoheadrightarrow \mathbb{T}^{ord}_{\Delta} $. Then the trace of $\pi_{\mathbb{T}^{ord}_{\Delta} }$ is invariant by the action of $\Delta$ and $\pi_{\mathbb{T}^{ord}_{\Delta}}$ is a point of $\mathfrak{G}(\mathbb{T}^{ord}_{\Delta})$, hence there exists a morphism $h:\cR^{ps}_{red} \rightarrow \mathbb{T}^{ord}_{\Delta} $, inducing the pseudo-deformation $\pi_{\mathbb{T}^{ord}_{\Delta} }$. 

By construction, $h(\Tr \pi^{ps}(\Frob_{q}))=T_q$ for $q\nmid p$, so the morphism $h$ is surjective since the topological generator $\{T_{q}\}_{q \nmid p}$, $U_v$ and $U_{v^{\sigma}}$ over $\varLambda$ of $\mathbb{T}^{ord}_{\Delta}$ are in the image of $h$ ($\phi_{|G_{F_{v}}}\ne \phi^{\sigma}_{|G_{F_{v}}}$ implies that $U_v,U_{v^{\sigma}} \in \im h$). 

According to theorem \ref{main-thm1}, we have the isomorphisms $\cT_{+}\simeq \cR_{\tau=1}\simeq \cR_{red}^{ps}$ and by lemma \ref{Pseudo-character}, $\cR^{ps}$ is topologically generated over $\varLambda$ by $\Tr \pi^{ps}(\Frob_q)$ for all primes $q$ of $\cO_F$. Therefore, the morphism $\beta_f: \mathbb{T}^{ord} \rightarrow \cT_{+}$ is surjective ($\beta_f$ sends $T_q$ to $\Tr \rho_{\cT}(\Frob_q)$), and since the trace of $(\rho_{\cT})_{|G_F}$ is invariant by the action of $\sigma$, $\beta_f$ factors trough $\mathbb{T}^{ord}_{\Delta} $, so the Krull dimension of $\mathbb{T}^{ord}_{\Delta} $ is $\geq 1$. In addition, the Krull dimension of $h_F$ is two, hence after localization and completion by the height one prime $\mathfrak{n}$, we deduce that $\mathbb{T}^{ord}$ is of dimension $1$, hence $\mathbb{T}^{ord}_{\Delta}$ is of dimension $1$. 

It follows from Theorem \ref{main-thm1} that the tangent space of $\cR^{ps}_{red}$ has dimension $1$ and since $\mathbb{T}^{ord}_{\Delta}$ is equidimensional of dimension $1$, the surjection $h:\cR^{ps}_{red} \twoheadrightarrow \mathbb{T}^{ord}_{\Delta}$ is an isomorphism of regular local rings of dimension $1$.$\qed$

\medskip
Let $\cO$ be the ring of integers of a $p$-adic field containing the image of $\phi$, and assume until the end of this section that the $ p$-ordinary Hecke algebra $h_{\Q,\gm}$ is define over $\cO$ (i.e after an extension of scalars $h_{\Q,\gm} $ will be an object of $\mathrm{CNL}_\cO$), and assume also that the restriction of $\bar{\rho}$ to $\Gal(\bar{\Q}/{\Q(\sqrt{(-1)^{(p-1)/2}p)}})$ is absolutely irreducible and that there exists $\gamma_0 \in G_{F_v}$ such that $\bar{\phi}(\gamma_0) \ne \bar{\phi}^{\sigma}(\gamma_0)$. Then the theorem of Taylor-Wiles \cite{A.Wiles} implies that the $p$-ordinary Hecke algebra $h_{\Q,\gm}$ is isomorphic to an universal ring $R^{ord}$ representing the $p$-ordinary minimally ramified deformations of $\bar{\rho}$ to the objects of $\mathrm{CNL}_\cO$.
\begin{defn}\ 

1)Let $A$ be an object of $\mathrm{CNL}_\cO$, $\tilde{a}, \tilde{d}: G_{F_{\{p\}}} \rightarrow A$ and $\tilde{x}: G_{F_{\{p\}}}\times G_{F_{\{p\}}} \rightarrow A$ be a continuous pseudo-representation, then we say that $\pi_{A}$ is a pseudo-deformation of $\bar{\pi}$ if, and only if, $\pi_{A}$ mod $\mathfrak{m}_{A} = \bar{\pi}$. 

2)Let $\mathfrak{G}_\cO:\mathrm{CNL}_\cO \rightarrow \mathrm{Set}$ be the functor of all pseudo-deformations $\pi_A=(\tilde{a}_A,\tilde{d}_A, \tilde{x}_A)$ of $\bar{\pi}$ which satisfy the following conditions: 
\begin{enumerate}
\item 
For all $h\in G_{F_{v}}$, $h' \in G_{F}$, $\tilde{x}_{A}(h',h)=0$.
\item
$\tilde{d}_{A}(g)=1$ if $g\in I_{v}$.
\item 
$\Tr \pi_{A}(t^{-1} g t)=\Tr \pi_{A}(g)$ for each $t$ in $G_{\Q}$ and $g\in G_{F}$.

 \end{enumerate}
\end{defn}

\begin{lemma}\label{Pseudo-character2} One always has:

\begin{enumerate}

\item Let $A$ be an object of $\mathrm{CNL}_\cO$, and $\pi_{A}=(\tilde{a}_{A},\tilde{d}_{A},\tilde{x}_{A})$ be a pseudo-deformation of $\bar{\pi}$, then $\pi_{A}$ depends only on the trace $\Tr\pi_{A}=\tilde{a}(g)+\tilde{d}(g)$ and the determinant $\det \pi_{A}=\tilde{a}(g)\tilde{d}(g)-\tilde{x}(g,g)$ as follow:

\begin{equation}
\begin{split}
\tilde{a}_{A}(g)&= \frac{\Tr \pi_{A}(\gamma_{0} g)- \lambda_{2}\Tr \pi_{A}(g)   }{\lambda_{1}-\lambda_{2}} \\ \tilde{d}_{A}(g)&= \frac{\Tr \pi_{A}(\gamma_0 g)- \lambda_1 \Tr \pi_{A}(g)}{\lambda_2-\lambda_1}
\end{split}
\end{equation}
where $\lambda_{1}=\tilde{a}(\gamma_0)$ and $\lambda_{2}=\tilde{d}(\gamma_0)$ are the unique roots of the polynomial $X^2-\Tr \pi_A(\gamma_0) X + \det \pi_A(\gamma_0)$.

\item The functor $\mathfrak{G}_\cO$ is representable by $(R^{ps}, \pi_{R^{ps}})$. 

\end{enumerate}

\end{lemma}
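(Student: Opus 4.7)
The plan is to imitate verbatim the proofs of Lemma \ref{Pseudo-character} and of Proposition \ref{rep}, only changing the base category from $\mathfrak{C}$ to $\mathrm{CNL}_\cO$ and observing that the residual hypothesis $\bar{\phi}(\gamma_0)\ne\bar{\phi}^{\sigma}(\gamma_0)$ is exactly what is needed for Hensel's lemma to apply over $\cO$. No new ideas are required.

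For part (i), I would first use axioms $(5)$ and $(6)$ of Definition \ref{pseudo} applied with $s=t=\gamma_0$: axiom $(6)$ forces $\tilde{x}(\gamma_0,\gamma_0)=0$, so $\det\pi_A(\gamma_0)=\tilde{a}(\gamma_0)\tilde{d}(\gamma_0)$, and combined with $\Tr\pi_A(\gamma_0)=\tilde{a}(\gamma_0)+\tilde{d}(\gamma_0)$ this exhibits $\lambda_1,\lambda_2$ as roots of $X^2-\Tr\pi_A(\gamma_0)X+\det\pi_A(\gamma_0)$. Because $\bar{\pi}$ reduces to $(\bar{\phi},\bar{\phi}^{\sigma},0)$ and $\bar{\phi}(\gamma_0)\ne\bar{\phi}^{\sigma}(\gamma_0)$, the discriminant of this polynomial is a unit in $A$, so Hensel's lemma (which is valid in any complete local ring, in particular in the object $A$ of $\mathrm{CNL}_\cO$) produces the two roots $\lambda_1,\lambda_2$ unambiguously. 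Then, applying axioms $(1)$ and $(2)$ with $s=\gamma_0$ and invoking axiom $(6)$ to kill the error terms, one gets $\tilde{a}(\gamma_0 g)=\lambda_1\tilde{a}(g)$ and $\tilde{d}(\gamma_0 g)=\lambda_2\tilde{d}(g)$. Solving the resulting linear $2\times 2$ system for $(\tilde{a}(g),\tilde{d}(g))$ in terms of $\Tr\pi_A(g)$ and $\Tr\pi_A(\gamma_0 g)$ yields the displayed formulas.

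For part (ii), the strategy is to check Schlessinger's representability criteria for $\mathfrak{G}_\cO$. Compatibility with fibre products and left-exactness are formal: a pseudo-deformation is just a collection of continuous functions satisfying polynomial identities plus the linear local conditions imposed in the definition of $\mathfrak{G}_\cO$, and all three conditions (vanishing of $\tilde{x}_A(\cdot,h)$ for $h\in G_{F_v}$, triviality of $\tilde{d}_A$ on $I_v$, and $\sigma$-invariance of the trace on $G_F$) are preserved by pull-back. The only non-formal point is finiteness of the mod-$\gm_\cO$ tangent space $t_{\mathfrak{G}_\cO}$, which is essentially the sole substantive step of the proof. As in \cite[\S2.10]{skinner-wiles} and in Proposition \ref{rep}(ii), by part (i) a tangent vector is determined by its trace, and its off-diagonal datum $\tilde{x}_\epsilon$ is controlled by a pair of $1$-cocycle classes valued in $\rH^1(F,\bar{\phi}/\bar{\phi}^{\sigma})$ and $\rH^1(F,\bar{\phi}^{\sigma}/\bar{\phi})$. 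Both cohomology groups are finite-dimensional $\cO/\gm_\cO$-vector spaces (the Galois group $G_{F,\{p\}}$ being topologically finitely generated on finite modules), hence $t_{\mathfrak{G}_\cO}$ is finite.

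The main obstacle I anticipate is purely bookkeeping: one must check that the Hensel argument for $\lambda_1,\lambda_2$ still works after base change to an arbitrary object $A$ of $\mathrm{CNL}_\cO$ with residue field $\mathbb{F}_p$, rather than residue field $\bar{\Q}_p$ as in Lemma \ref{Pseudo-character}, and that axiom $(6)$ of the definition of pseudo-representation is still postulated in the CNL$_\cO$ setting (otherwise $\tilde{x}(\gamma_0,g)$ and $\tilde{x}(g,\gamma_0)$ would not vanish and the linear system would not decouple). Once those minor verifications are done, the rest of the argument transfers without change.
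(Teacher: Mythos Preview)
Your proposal is correct and follows exactly the paper's own argument: the paper simply says part (i) is ``the same proof as in Lemma \ref{Pseudo-character}'' and that part (ii) follows from Schlessinger's criteria with the only non-trivial point being the finiteness of the tangent space via \cite[\S2.10]{skinner-wiles} and the finite-dimensionality of $\rH^{1}(G_{F,\{p\}},\bar{\phi}/\bar{\phi}^{\sigma})$. Your write-up is in fact more detailed than the paper's, and the bookkeeping worries you flag (Hensel over $A\in\mathrm{CNL}_\cO$ with residue field $\mathbb{F}_p$, and the presence of axiom $(6)$) are exactly the right minor checks and cause no difficulty.
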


\begin{proof}\

i) The same proof as in lemma \ref{Pseudo-character}.

ii) The functor $\mathfrak{G}_\cO$ satisfies Schlesinger's criteria, the only non-trivial point is the finiteness of the dimension of the tangent space of $\mathfrak{G}_\cO$, and this is provided by the same argument of lemma \cite[\S2.10]{skinner-wiles} (since $\rH^{1}(G_{F_{\{p\}}},\bar{\phi}/\bar{\phi}^{\sigma})$ has a finite dimension).

\end{proof}

Hensel lemma's implies that there exists a basis of $M_{R^{ord}}$ such that the universal $p$-ordinary deformation $\rho_{R^{ord}}$ satisfies the following conditions: $$\rho_{R^{ord}}(\gamma_{0})=\left(
\begin{smallmatrix}
*&0\\
0&*\end{smallmatrix}\right) \text{, and } (\rho_{R^{ord}})_{\mid G_{\Q_p}}=\left(
\begin{smallmatrix}
*&*\\
0 &* \end{smallmatrix}\right) \text{ in this basis}.$$

Therefore, by using the exact same argument that is given in the proof of lemma \ref{g}, we obtain a morphism $\alpha : R^{ps} \rightarrow h_{\Q,\gm}$ which factors through $h_{\Q,\gm}^{\omega=1}$. 

The ring $\cR^{ps}$ is just an extension by scalar to $\bar{\Q}_p$ of the completed local ring of $\Spec R^{ps}$ at the height one prime ideal associated to the pseudo-deformation $\pi$ of $\bar{\pi}$. We deduce from lemma \ref{Pseudo-character2} that $R^{ps}$ is generated over the Iwasawa algebra $\varLambda_\cO$ by the Trace of the universal pseudo-deformation (see \cite{wiles}, p564).

Now, by using the Theorem \ref{base-change} and the exact same arguments that are given in the proof of \cite[3.10]{cho-vatsal}, we deduce the following corollary, without assuming that $(\bar{\phi}/\bar{\phi}^{\sigma})^2_{\mid I_v} \ne 1$ as in Theorem \cite[B]{cho-vatsal}.

\begin{cor}  Assume that the following conditions holds for $\bar{\rho}$:  
\begin{enumerate}

\item The character $\bar{\phi}$ is everywhere unramified. 

\item $\bar{\rho}$ is $p$-distinguished and the restriction of $\bar{\rho}$ to $\Gal(\bar{\Q}/{\Q(\sqrt{(-1)^{(p-1)/2}p)}})$ is absolutely irreducible.
\end{enumerate}

Then the image of the base-change morphism $\beta: h_F \rightarrow h_{\Q,\gm}^{\omega=1}$ has a finite index, and the image of the morphism $\alpha : R^{ps} \rightarrow h_{\Q,\gm}^{\omega=1}$ is contained in $\im \beta$ and it has also a finite index in $h_{\Q,\gm}^{\omega=1}$.

\end{cor}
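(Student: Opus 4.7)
The plan is to follow Cho--Vatsal's proof of \cite[3.10]{cho-vatsal} verbatim, with the single substitution that their input (valid under the stronger hypothesis $(\bar{\phi}/\bar{\phi}^{\sigma})^{2}_{\mid I_v}\neq 1$) is replaced by our Theorem~\ref{base-change}, whose proof proceeds entirely through the pseudo-deformation theory of \S\ref{pseudo-deformation} and therefore dispenses with that extra hypothesis.

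First, the containment $\im\alpha\subseteq\im\beta$ is immediate from the structural description of $R^{ps}$. By Lemma~\ref{Pseudo-character2}, $R^{ps}$ is topologically generated over $\varLambda_{\cO}$ by the traces $\Tr\pi_{R^{ps}}(\Frob_{\mathfrak{q}})$ for primes $\mathfrak{q}$ of $\cO_F$ coprime to $p$, and under $\alpha$ each such trace is, by construction, $\beta(T_{\mathfrak{q}})$ (or, for $\mathfrak{q}=q\cO_F$ inert in $F/\Q$, a polynomial expression in $\beta(T_q)$ and the image of $\det$). Hence $\im\alpha$ is a $\varLambda_{\cO}$-subalgebra of $\im\beta$, and it suffices to show that $\im\alpha$ has finite index in $h_{\Q,\gm}^{\omega=1}$. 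Under hypothesis (ii) the Taylor--Wiles theorem gives $h_{\Q,\gm}\cong R^{ord}$; in particular $h_{\Q,\gm}$ is a finite flat $\varLambda_{\cO}$-algebra of Krull dimension two, and the same holds for the fixed subring $h_{\Q,\gm}^{\omega=1}$. An analogous finiteness of $R^{ps}$ over $\varLambda_{\cO}$, of Krull dimension at most two, follows from Lemma~\ref{Pseudo-character2} together with the tangent-space argument of \cite[\S2.10]{skinner-wiles}.

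Next, I would pass to the fibre at the height-one prime $\gp_f\subset h_{\Q,\gm}^{\omega=1}$ attached to $f$: after extending scalars to $\bar{\Q}_p$ and localizing-completing at $\gp_f$, the target identifies with $\cT_{+}$, the map $\beta$ becomes the isomorphism $\beta_f\colon\mathbb{T}^{ord}_{\Delta}\xrightarrow{\sim}\cT_{+}$ of Theorem~\ref{base-change}, and the map $\alpha$ becomes the composite $\cR^{ps}\twoheadrightarrow\cR^{ps}_{red}\xrightarrow{\sim}\cT_{+}$ provided by Theorem~\ref{main-thm1}. In particular both $\im\alpha$ and $\im\beta$ surject onto $h_{\Q,\gm}^{\omega=1}$ locally at $\gp_f$.

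Finally, to convert this local surjectivity into the stated finite-index statement, one runs the Cho--Vatsal dimension argument: $h_{\Q,\gm}^{\omega=1}$ is a finite $\varLambda_{\cO}$-module, its $\varLambda_{\cO}$-subalgebra $\im\alpha$ agrees with the full ring after localization at the height-one prime $\gp_f$, hence the cokernel is a $\varLambda_{\cO}$-torsion module whose support avoids $\gp_f$, which forces $\im\alpha$ (and therefore $\im\beta$) to have the same generic rank as $h_{\Q,\gm}^{\omega=1}$. The main technical point -- and where one must imitate \cite[3.10]{cho-vatsal} carefully -- is precisely this last step, propagating the isomorphism at $\gp_f$ to a generic-rank equality over $\varLambda_{\cO}$ by combining the Krull-dimension estimates on $R^{ps}$ with the regularity of $\varLambda_{\cO}$; once this is granted, the conclusion is a formal consequence of Theorems~\ref{main-thm1} and~\ref{base-change}.
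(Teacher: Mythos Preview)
Your approach is essentially the same as the paper's: the paper's ``proof'' consists of the single sentence preceding the corollary, which says to combine Theorem~\ref{base-change} with the arguments of \cite[3.10]{cho-vatsal}, exactly as you propose. Your sketch of how those arguments run (containment $\im\alpha\subseteq\im\beta$ via the trace generation of $R^{ps}$, local isomorphism at $\gp_f$ via Theorems~\ref{main-thm1} and~\ref{base-change}, then a dimension/support argument) is a reasonable unpacking of what the paper leaves implicit.

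One small inaccuracy: the claim that $R^{ps}$ is finite over $\varLambda_{\cO}$ does \emph{not} follow from Lemma~\ref{Pseudo-character2} and the Skinner--Wiles tangent-space bound --- those only give that $R^{ps}$ is Noetherian with finite-dimensional tangent space, not finiteness as a $\varLambda_{\cO}$-module. Fortunately you do not need this: since $\im\alpha\subseteq h_{\Q,\gm}^{\omega=1}$ and the latter is finite over $\varLambda_{\cO}$, the image is automatically finite over $\varLambda_{\cO}$, which is all the final step requires. Also, your last paragraph's passage from ``isomorphism after localizing at the single prime $\gp_f$'' to ``same generic rank'' is where the real content of the Cho--Vatsal argument lies (one needs control over all components of $h_{\Q,\gm}^{\omega=1}$, not just the one through $\gp_f$); you are right to flag this as the point where one must invoke \cite[3.10]{cho-vatsal} rather than improvise.
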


\section{Deformation of a reducible Galois representation and proof of theorem \ref{q-base-change}}\

\begin{lemma}
The ring $\mathbb{T}^{n.ord}$ is equidimensional of dimension $3$.
\end{lemma}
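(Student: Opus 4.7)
The plan is to deduce this dimension statement from Hida's control theorem for nearly ordinary Hilbert modular forms over the real quadratic field $F$. For $F$ real quadratic, Leopoldt's conjecture for $F$ holds trivially, and the nearly ordinary weight-nebentypus space is parameterized by continuous characters of a pro-$p$ group of $\Z_p$-rank $1+[F:\Q]=3$, namely one parameter for the central character and one for each of the two weights at the two places of $F$ above $p$. The corresponding Iwasawa algebra $\Lambda^{n.ord}_F$ is therefore isomorphic to $\cO[[X_1,X_2,X_3]]$, a regular local ring of Krull dimension four, which after inverting $p$ becomes equidimensional of Krull dimension three.

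Hida's control theorem then asserts that $h^{n.ord}_F$ is finite and torsion-free over $\Lambda^{n.ord}_F$. Since the base is regular, torsion-freeness is equivalent to flatness, so $h^{n.ord}_F$ is Cohen--Macaulay and equidimensional of the same Krull dimension as $\Lambda^{n.ord}_F$; every irreducible component of $\Spec h^{n.ord}_F$ dominates $\Spec \Lambda^{n.ord}_F$. Inverting $p$, one obtains an equidimensional ring of Krull dimension three.

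Finally, $\mathbb{T}^{n.ord}$ is obtained from $h^{n.ord}_F$ by localizing at the prime $\mathfrak{n}^{n.ord}$ attached to $E_{(\phi,\phi^\sigma)}$ and completing at the resulting maximal ideal, which amounts to forming the completed local ring of the nearly ordinary Hilbert eigenvariety at this classical point. Since completion of a Noetherian local ring is faithfully flat and preserves equidimensionality and Cohen--Macaulayness, $\mathbb{T}^{n.ord}$ inherits equidimensionality of Krull dimension three from $h^{n.ord}_F$. The essentially only substantive ingredient is Hida's control theorem; the principal subtlety to check is that the described localization-and-completion really does yield the dimension-three completed local ring of the eigenvariety at the Eisenstein point, and no Galois-cohomological input is required at this stage.
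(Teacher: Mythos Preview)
Your overall approach coincides with the paper's: both rest on Hida's result that $h_F^{n.ord}$ is a finite torsion-free module over the three-variable Iwasawa algebra $\varLambda^{n.ord}_\cO \simeq \cO[[T_1,T_2,T_3]]$, and then pass to $\mathbb{T}^{n.ord}$ by localizing at the height-one prime $\mathfrak{n}^{n.ord}$ and completing.

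There is, however, a genuine gap. The assertion ``since the base is regular, torsion-freeness is equivalent to flatness'' is false over regular local rings of dimension $\geq 2$; for instance, the maximal ideal of $k[[x,y]]$ is a finitely generated torsion-free module that is not flat (equivalently, not free). So you cannot deduce Cohen--Macaulayness, nor that every component of $\Spec h_F^{n.ord}$ dominates $\Spec \varLambda^{n.ord}_\cO$, by this route. The paper closes the gap by using instead that $h_F^{n.ord}$ is \emph{reduced}: in a reduced Noetherian ring the non-zero elements of the minimal primes are exactly the zero-divisors, whereas torsion-freeness over $\varLambda^{n.ord}_\cO$ means that non-zero elements of $\varLambda^{n.ord}_\cO$ act as non-zero-divisors on $h_F^{n.ord}$. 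Hence every minimal prime of $h_F^{n.ord}$ contracts to $(0)$ in $\varLambda^{n.ord}_\cO$, each irreducible component is finite over the four-dimensional domain $\varLambda^{n.ord}_\cO$, and equidimensionality of dimension $4$ (then $3$ after localization and completion) follows. Once you replace the flatness step by this reducedness argument, the rest of your proof goes through.
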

\begin{proof}
Note that the non-zero elements of the minimal prime ideals of a reduced noetherian commutative ring are precisely the zero-divisors. It is known by the work of Hida that the nearly ordinary Hecke algebra $h_F^{n.ord}$ is a finite torsion-free module over the Iwasawa algebra of three variables $\varLambda^{n.ord}_{\cO}=\cO[[T_1,T_2,T_3]]$ (see \cite{hida90}), and since the Hecke algebra $h_F^{n.ord}$ is reduced, $h_F^{n.ord}$ is necessarily an equidimensional local ring of dimension $4$, and $\mathbb{T}^{n.ord}$ is an equidimensional local ring of dimension $3$. 
\end{proof}

Let $A$ be a ring of the category $\mathfrak{C}$ and $\rho_A :G_{F}\rightarrow \GL_{2}(A)$ a deformation of $\tilde{\rho}$, then we say that $\rho_A$ is a nearly-ordinary at $p$ if and only if, we have $$ (\rho_A)_{|G_{F_{v}}} \simeq \left(
\begin{smallmatrix}
\psi'_{v,A}&*\\
0 &\psi''_{v,A}\end{smallmatrix}\right) \text{ and } (\rho_A)_{|G_{F_{v^{\sigma}}}} \simeq \left(
\begin{smallmatrix}
\psi''_{v^{\sigma},A}&0\\
* &\psi'_{v^{\sigma},A}\end{smallmatrix}\right), $$ where $\psi''_{v,A}$ is a character lifting $\phi^{\sigma}_{\mid G_{F_v}}$ and $\psi''_{v^{\sigma},A}$ is a character lifting $\phi_{\mid G_{F_{v^\sigma}}}$. Moreover, if $\psi''_{v,A}$ and $\psi''_{v^{\sigma},A}$ are unramified, then we say that $\rho_A$ is ordinary at $p$.

\begin{defn}
Let $\mathcal{D}^{n.ord}: \mathfrak{C} \rightarrow \mathrm{SETS}$ be the functor of strict equivalence classes of deformation of $\tilde{\rho}=\left(\begin{smallmatrix}
\phi & \eta \\
0 &\phi^{\sigma} \end{smallmatrix}\right)$ which are nearly ordinary at $p$, and let $\mathcal{D}^{ord}$ be the subfunctor of $\mathcal{D}^{n.ord}$ of deformations which are ordinary at $p$.

\end{defn}
Since the representation $\tilde{\rho}$ has an infinite image and $\phi(\Frob_v) \ne \phi^{\sigma}(\Frob_v)$, Schlesinger criterions imply that $\mathcal{D}^{n.ord}$ (resp. $\mathcal{D}^{ord}$)  is representable by $(\cR^{n.ord},\rho_{\cR^{n.ord}})$ (resp. $(\cR^{ord},\rho_{\cR^{ord}})$). The determinant $\det \rho_{\cR^{ord}}$ is a deformation of the determinant $\det \pi$, so $\cR^{ord}$ is endowed naturally with a structure of $\varLambda$-algebra.
\subsection{Nearly ordinary deformation of a reducible representation}\
 
Hida constructed in \cite{hida90} a pseudo-character $\Ps_{h_{F}^{n.ord}}: G_F \rightarrow h^{n.ord}_F$ of dimension two, such that for all primes of $\ell$ of $\cO_{F}$ prime to $p$, $\Ps_{h^{n.ord}_{F}}(\Frob_{\ell})$ is the Hecke operator $T_{\ell}$. After composing $\Ps_{h_{F}^{n.ord}}$ by the localization map $h^{n,ord}_F \rightarrow \mathbb{T}^{n,ord}$, we obtain a pseudo-character of dimension two $\Ps_{\mathbb{T}^{n,ord}}: G_F \rightarrow \mathbb{T}^{n,ord}$ lifting the pseudo-character $ \Tr \tilde{\rho}=\phi \oplus \phi^{\sigma}$.

Let $Q(\mathbb{T}^{n,ord}):=\prod S'_i$ be the total quotient ring of $\mathbb{T}^{n,ord}$( $\mathbb{T}^{n,ord} \subset Q(\mathbb{T}^{n,ord})$), so $Q(\mathbb{T}^{n,ord})= \prod \mathbb{T}^{n,ord}_{\mathfrak{F}_i}$, where $\mathfrak{F}_i$ runs over the minimal primes of $\mathbb{T}^{n,ord}$, and as well known each $\mathfrak{F}_i$ corresponds to a nearly ordinary $p$-adic Family which specializes to $E_{(\phi,\phi^{\sigma})}$. 

Moreover, a result of Hida \cite{hida90} implies that there exists a unique semi-simple Galois representation $\rho_{Q(\mathbb{T}^{n,ord})}: G_{F} \rightarrow \GL_2(Q(\mathbb{T}^{n,ord}))$ satisfying $\Tr (\rho_{Q(\mathbb{T}^{n,ord})})= \Ps_{\mathbb{T}^{n,ord}}$, and which is nearly ordinary at $v$ and $v^{\sigma}$, in the sens that $(\rho_{Q(\mathbb{T}^{n,ord})})_{\mid G_{F_v}}$ (resp. $(\rho_{Q(\mathbb{T}^{n,ord})})_{\mid G_{F_v^{\sigma}}}$) is the extension a character $\psi''_{\mathbb{T}^{n,ord},v}$ (resp. $\psi''_{\mathbb{T}^{n,ord},v^{\sigma}}$) lifting $\phi^{\sigma}_{\mid G_{F_v}}$ (resp. $\phi_{\mid G_{F_{v^\sigma}}}$) by a character $\psi'_{\mathbb{T}^{n,ord},v}$ (resp. $\psi'_{\mathbb{T}^{n,ord},v^{\sigma}}$). 

The direction of the precedent extensions are uniquely determined by the fact that $U_{v}(E_{(\phi,\phi^{\sigma})})=\phi^{\sigma}(\Frob_v).E_{(\phi,\phi^{\sigma})}$ and $U_{v^{\sigma}}(E_{(\phi,\phi^{\sigma})})=\phi(\Frob_{v^{\sigma}}).E_{(\phi,\phi^{\sigma})}$ (see lemma \ref{eta}).

Let $\gamma_0' \in G_{F_{v^{\sigma}}}$ such that $\phi(\gamma'_0)\ne \phi^{\sigma}(\gamma'_0)$. Hensel lemma's implies that the eigenvalues of $\rho_{Q(\mathbb{T}^{n,ord})}(\gamma'_0)$ are distincts, and hence there exists a basis $(e''_{1},e''_{2})$ of $M_{Q(\mathbb{T}^{n,ord})}$ such that $\rho_{Q(\mathbb{T}^{n,ord})}^{ord}(\gamma'_{0})=\left(
\begin{smallmatrix}
*&0\\
0&*\end{smallmatrix}\right)$ and $(\rho_{Q(\mathbb{T}^{n,ord})})_{\mid G_{F_{v^{\sigma}}}}=\left(
\begin{smallmatrix}
*&0\\
* &* \end{smallmatrix}\right)$ in this basis.

Let $a,b,c,d$ be the coefficients of the realization of $\rho_{Q(\mathbb{T}^{n,ord})}$ by matrix in the basis $(e''_{1},e''_{2})$ of $M_{Q(\mathbb{T}^{n,ord})}$, $B$ and $C$ be the $\mathbb{T}^{n,ord}$-sub modules of $Q(\mathbb{T}^{n,ord})$ generated respectively by the coefficients $b(g)$ and $c(g')$, where $g$ and $g'$ run over the elements of $G_F$.

Let $\gm_{\mathbb{T}^{n,ord}}$ be the maximal ideal of $\mathbb{T}^{n,ord}$ and $\Ext^1_{\bar{\Q}_p[G_F]}(\phi^{\sigma},\phi)_{G_{F_{v^\sigma}}}$ be the subspace of $\Ext^1_{\bar{\Q}_p[G_F]}(\phi^{\sigma},\phi)$ given by the extensions of $\phi^{\sigma}$ by $\phi$ which are trivial at $G_{F_{v^\sigma}}$.
We have the following generalization of the proposition \cite[2]{B2}.
\begin{prop}\label{generalisation} One always has :

\begin{enumerate}
\item $\Hom_{\mathbb{T}^{n,ord}}(B,\bar{\Q}_p)$ injects $\mathbb{T}^{n,ord} $-linearly in $\Ext^1_{\bar{\Q}_p[G_F]}(\phi^{\sigma},\phi)_{G_{F_{v^\sigma}}}$.
\item $B$ is an $\mathbb{T}^{n,ord}$-module of finite type and the annihilator of $B$ is zero.
\end{enumerate}
\end{prop}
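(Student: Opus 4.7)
The plan is to adapt the standard $\Ext$-cocycle construction from \cite{B2}, splitting the proof into (1) the construction of a $\mathbb{T}^{n,ord}$-linear injection from $\Hom_{\mathbb{T}^{n,ord}}(B,\bar{\Q}_p)$ into $\Ext^1_{\bar{\Q}_p[G_F]}(\phi^{\sigma},\phi)_{G_{F_{v^\sigma}}}$, and (2) a fractional-ideal argument relying on the absolute irreducibility of the Galois representations along each minimal prime of $\mathbb{T}^{n,ord}$.

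First I would record the preliminary observation that the diagonal entries $a,d$ of $\rho_{Q(\mathbb{T}^{n,ord})}$ in the chosen basis actually take values in $\mathbb{T}^{n,ord}$ itself: by Hensel's lemma the two distinct residual eigenvalues of $\rho_{Q(\mathbb{T}^{n,ord})}(\gamma_0')$ lift to $\mathbb{T}^{n,ord}$, and then the explicit formulas of Lemma \ref{Pseudo-character}(i) express $a(g)$ and $d(g)$ in terms of $\Tr\rho_{Q(\mathbb{T}^{n,ord})}$, which lies in $\mathbb{T}^{n,ord}$. The $(1,1)$-entry of the matrix identity $\rho(gh) = \rho(g)\rho(h)$ then gives $b(g)c(h) = a(gh) - a(g)a(h) \in \mathbb{T}^{n,ord}$, so $B \cdot C \subseteq \mathbb{T}^{n,ord}$.

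For (1): given $f \in \Hom_{\mathbb{T}^{n,ord}}(B,\bar{\Q}_p)$, I set $c_f(g) := f(b(g))$. The $(1,2)$-entry of $\rho(gh) = \rho(g)\rho(h)$ reads $b(gh) = a(g)b(h) + b(g)d(h)$, and applying $f$ (which is $\mathbb{T}^{n,ord}$-linear, so $a,d$ act on $\bar{\Q}_p$ through their reductions $\phi,\phi^{\sigma}$ modulo $\gm_{\mathbb{T}^{n,ord}}$) yields
\[
c_f(gh) = \phi(g)\,c_f(h) + c_f(g)\,\phi^{\sigma}(h).
\]
Hence $c_f$ is a $1$-cocycle representing a class in $\Ext^1_{\bar{\Q}_p[G_F]}(\phi^{\sigma},\phi)$, and this class is trivial on $G_{F_{v^\sigma}}$ because the basis was chosen so that $\rho_{Q(\mathbb{T}^{n,ord})}$ is lower triangular at $v^\sigma$, forcing $b$ to vanish there. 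The assignment $f \mapsto [c_f]$ is $\mathbb{T}^{n,ord}$-linear (both sides being $\bar{\Q}_p$-vector spaces via the quotient $\mathbb{T}^{n,ord} \twoheadrightarrow \bar{\Q}_p$), and injective because if $c_f$ vanishes identically then $f$ kills the generating set $\{b(g)\}_{g \in G_F}$ of $B$.

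For (2): the cuspidality of $h_F^{n.ord}$ ensures that every minimal prime $\mathfrak{F}_i$ of $\mathbb{T}^{n,ord}$ corresponds to a cuspidal nearly-ordinary Hida family whose associated Galois representation $\rho_i \colon G_F \to \GL_2(S'_i)$ is absolutely irreducible. In particular $c$ cannot vanish identically on any component $S'_i$, so finitely many elements $h_1,\ldots,h_k \in G_F$ suffice to guarantee that for every $i$ at least one $c(h_j)$ is nonzero in $S'_i$. A Zariski-density argument over the infinite field $\bar{\Q}_p$ then produces scalars $\lambda_j \in \bar{\Q}_p$ for which $c_0 := \sum_j \lambda_j\, c(h_j) \in C$ is nonzero in \emph{every} component $S'_i$, i.e.\ a non-zero-divisor (hence a unit) in $Q(\mathbb{T}^{n,ord})$. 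Combining $B \cdot c_0 \subseteq B \cdot C \subseteq \mathbb{T}^{n,ord}$ with the invertibility of $c_0$ in $Q(\mathbb{T}^{n,ord})$ yields $B \hookrightarrow c_0^{-1}\mathbb{T}^{n,ord} \simeq \mathbb{T}^{n,ord}$, so $B$ is a submodule of a noetherian module and therefore finitely generated. Finally, if $t \in \mathbb{T}^{n,ord}$ annihilates $B$, then in each component $t_i$ kills all $(1,2)$-entries of $\rho_i$; absolute irreducibility of $\rho_i$ forces $t_i = 0$, and reducedness of $\mathbb{T}^{n,ord}$ (which injects into $\prod_i S'_i$) gives $t = 0$.

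The main obstacle is the construction of the non-zero-divisor $c_0 \in C$: it is precisely here that one must invoke the cuspidality of every nearly-ordinary Hida family passing through $E_{(\phi,\phi^{\sigma})}$ to guarantee absolute irreducibility of $\rho_i$ on each component, and then combine this with a prime-avoidance / Zariski-density argument over $\bar{\Q}_p$ to extract a single regular element out of finitely many values of $c$.
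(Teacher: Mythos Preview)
Your argument for part (2) is correct and in fact spells out what the paper obtains by citing \cite[Lemme~4]{B2}: the fractional-ideal embedding $B \hookrightarrow c_0^{-1}\mathbb{T}^{n,ord}$ via a non-zero-divisor $c_0 \in C$, together with irreducibility on each component, gives both finite generation and trivial annihilator.

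There is, however, a genuine gap in your injectivity argument for part (1). You write that $f \mapsto [c_f]$ is injective ``because if $c_f$ vanishes identically then $f$ kills the generating set $\{b(g)\}_{g\in G_F}$ of $B$.'' But the target is an $\Ext^1$ group: $[c_f]=0$ means that $c_f$ is a \emph{coboundary}, i.e.\ $c_f(g)=\lambda(\phi(g)-\phi^{\sigma}(g))$ for some $\lambda\in\bar{\Q}_p$, and since $\phi\neq\phi^{\sigma}$ such a coboundary need not vanish as a function. You have only treated the case $c_f=0$ identically, not the case where $c_f$ is a nonzero coboundary, so injectivity is not established.

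The paper closes this gap with a commutator computation. Using that $\bar b|_{G_{F_{v^\sigma}}}=0$ and that $\rho_{Q(\mathbb{T}^{n,ord})}(\gamma_0')$ is diagonal, one checks
\[
\bar b(\gamma_0'\,g\,\gamma_0'^{-1}g^{-1})=\frac{\bar b(g)}{\phi^{\sigma}(g)}\Bigl(\frac{\phi(\gamma_0')}{\phi^{\sigma}(\gamma_0')}-1\Bigr),
\]
and since $\phi(\gamma_0')\neq\phi^{\sigma}(\gamma_0')$ and $\gamma_0' g\gamma_0'^{-1}g^{-1}\in G_H$, this shows that $B/\gm_{\mathbb{T}^{n,ord}}B$ is already generated by the $\bar b(h)$ with $h\in G_H$. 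Now $H$ is the splitting field of $\phi/\phi^{\sigma}$, so any coboundary for $\phi/\phi^{\sigma}$ restricts to zero on $G_H$; hence if $[c_f]=0$ then $f(\bar b(h))=0$ for all $h\in G_H$, and therefore $f=0$. This extra step is what is missing from your proof.
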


\begin{proof}\

i) Using the lemma \ref{Pseudo-character}, the coefficients $a$, $d$ and $b(g).c(g')$ can be obtained only from the trace $\Ps_{\mathbb{T}^{n,ord}}$ and the determinant $\det \rho_{Q(\mathbb{T}^{n,ord})}$. Moreover, the reduction of $\Ps_{\mathbb{T}^{n,ord}}$ is $\phi + \phi^{\sigma}$, hence $(a,d,bc)$ is a pseudo-deformation of $\pi=(\phi,\phi^{\sigma},0)$, and hence $a-\phi$, $d-\phi^{\sigma}$ and $b(g)c(g')$ are in $\gm_{\mathbb{T}^{n,ord}}$. 

Denote by $\bar{b}$ the image of $b$ in $\bar{B}=B/\gm_{\mathbb{T}^{n,ord}} B$, so we get a group homomorphism:

$$ G \rightarrow \left(
\begin{smallmatrix}
\bar{\Q}_p & \bar{B} \\ 0
& \bar{\Q}_p \end{smallmatrix}\right), \text{ given by } g \rightarrow  \left(
\begin{smallmatrix}
\phi & \bar{b}(g) \\ 0
& \phi^{\sigma} \end{smallmatrix}\right) $$ 

Since the restriction of $b$ to $G_{F_{v^{\sigma}}}$ is trivial in our basis, we obtain a morphism $$j: \Hom_{\mathbb{T}^{n,ord}}(B/\gm_{\mathbb{T}^{n,ord}} B, \bar{\Q}_p) \rightarrow \Ext^1_{\bar{\Q}_p[G_F]}(\phi^{\sigma},\phi)_{G_{F_{v^\sigma}}}$$ which associates to a morphism $f: B/\gm_{\mathbb{T}^{n,ord}} B \rightarrow \bar{\Q}_p$ the $1$-cocycle $g \rightarrow f(\bar{b}(g))$ (since $b(g)c(g') \in \gm_{\mathbb{T}^{n,ord}}$). We remark that the 
choice of the basis $(e''_{1},e''_{2})$ of $M_{Q(\mathbb{T}^{n,ord})}$ implies the $1$-cocycle $g \rightarrow f(\bar{b}(g))$ is trivial on $G_{F_{v^{\sigma}}}$.

Now, we will prove that $j$ is injective. First, a direct computation shows that $$\bar{b}(\gamma_0' g \gamma^{'-1}_0g^{-1})=\frac{\bar{b}(g)}{\phi^{\sigma}(g)} (\frac{\phi(\gamma_0')}{\phi^{\sigma}(\gamma'_0)}-1)$$ and implies that $B/\gm_{\mathbb{T}^{n,ord}} B$ is generated over $\mathbb{T}^{n,ord}$ only by the elements $\bar{b}(g)$, when $g$ runs over $G_H$ ($\gamma_0' g \gamma^{'-1}_0g^{-1} \in G_H$). Now, if $f \in \Hom_{\mathbb{T}^{n,ord}}(B/\gm_{\mathbb{T}^{n,ord}} B, \bar{\Q}_p)$ such that $f(\bar{b})$ is trivial in $\Ext^1_{\bar{\Q}_p[G_F]}(\phi^{\sigma},\phi)_{G_{F_{v^\sigma}}}$, then $f(\bar{b})$ is a $1$-co boundary and the restriction of $f$ to $G_H$ is trivial (since $H$ is the splitting field of $\phi/\phi^{\sigma}$). On the other hand, $B/\gm_{\mathbb{T}^{n,ord}} B$ is generated by $\bar{b}(g)$, when $g$ runs over $G_H$, therefore $f$ is trivial. 

ii) Since the representation $\rho_{Q(\mathbb{T}^{n,ord})}$ is semi-simple, Lemma \cite[4]{B2} implies that $B$ is a finite type $ \mathbb{T}^{n,ord}$-module.

The pseudo-character $\Ps_{h_{F}^{n.ord}}$ rises to a totally odd representation $$\rho_{h_F^{n.ord}}:G_F \rightarrow \GL_2(Q(h_{F}^{n.ord})),$$ where $Q(h_{F}^{n.ord})$ is the total fraction field of $h_{F}^{n.ord}$. We have $Q(h_{F}^{n.ord})=\prod \mathfrak{I}_i$, where $\mathfrak{I}_i$ runs over the fields given by the localization of $h_{F}^{n.ord}$ at the minimal primes of $h_F^{n.ord}$ (each $\mathfrak{I}_i$ corresponds to a nearly ordinary Hida family). There exists a basis of $M_ {Q(h_F^{n.ord})}$ in which $\rho_{h_F^{n.ord}}(c)= \left(
\begin{smallmatrix}
* & 0 \\ 0
& * \end{smallmatrix}\right)$, and let $a',b',c',d'$ be the entries of the realization of $\rho_{h_F^{n.ord}}$ by a matrix in this basis. The functions $a',d'$ and $b'c'$ depend only of the trace $\Ps_{h_{F}^{n.ord}}$ and the determinant $\det \rho_{h_F^{n.ord}}$, and the values the functions $a',d'$ and $b'c'$ are in $h_{F}^{n.ord}$.

Since the noncritical classical cuspidal Hilbert modular forms are Zariski dense on each irreducible component of $\Spec h^{n.ord}_F$, then for each field $\mathfrak{I}_i$ there exists $g_i,g'_i$ in $G_F$, such that the image by projection of $b'(g_i)c'(g'_i)$ is not trivial in $\mathfrak{I}_i$. Thus, all the representations $\rho_{S'_i}$ given by composing $\rho_{Q(\mathbb{T}^{n,ord})}$ with the projection $\rho_{Q(\mathbb{T}^{n,ord})}= \prod \mathbb{T}^{n,ord}_{\mathfrak{F}_i} \rightarrow S'_i=\mathbb{T}^{n,ord}_{\mathfrak{F}_i}$ are absolutely irreducible, so the image of $B$ is each $S'_i$ is non zero and we can conclude that the annihilator of $B$ in $\mathbb{T}^{n,ord}$ is zero.
\end{proof}

\begin{cor}\label{def n.ord} One always has:
\begin{enumerate}
\item The $\mathbb{T}^{n,ord}$-module $B$ is free of rank one and there exists an adapted basis $(e''_{1},e''_{2})$ of $M_{Q(\mathbb{T}^{n,ord})}$ such that $B$ is generated over $\mathbb{T}^{n,ord}$ by $1$. 

\item In the basis $(e''_{1},e''_{2})$, the realization $\rho_{Q(\mathbb{T}^{n,ord})}(\gamma'_0)$ is diagonal and the representation $\rho_{Q(\mathbb{T}^{n,ord})}:G_F \rightarrow \GL_2(\mathbb{T}^{n,ord})$ is a nearly ordinary deformation of $\tilde{\rho}$.

\end{enumerate}
\end{cor}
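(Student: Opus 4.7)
The strategy is to cut out an integral model of $\rho_{Q(\mathbb{T}^{n,ord})}$ from the generically irreducible representation over the total fraction ring by rescaling the chosen basis so that the $\mathbb{T}^{n,ord}$-module $B$ becomes the unit module, and then to verify by hand that all four matrix entries land in $\mathbb{T}^{n,ord}$.

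For part (i), I would start from the injection of Proposition \ref{generalisation}(i),
$$\Hom_{\mathbb{T}^{n,ord}}(B/\gm_{\mathbb{T}^{n,ord}} B, \bar{\Q}_p) \hookrightarrow \Ext^1_{\bar{\Q}_p[G_F]}(\phi^{\sigma},\phi)_{G_{F_{v^{\sigma}}}},$$
and observe that the target has dimension one: it is canonically identified with $\rH^1(F,\phi/\phi^{\sigma})_{G_{F_{v^{\sigma}}}}$, and the one-dimensionality statement of Proposition \ref{tg}(ii), to which the nontrivial element of $\Gal(F/\Q)$ applies (swapping $\phi\leftrightarrow\phi^{\sigma}$ and $v\leftrightarrow v^{\sigma}$), gives exactly this dimension. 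Consequently $\dim_{\bar{\Q}_p} B/\gm_{\mathbb{T}^{n,ord}} B \leq 1$, so by Nakayama's lemma together with the finite-type statement of Proposition \ref{generalisation}(ii), $B$ is a cyclic $\mathbb{T}^{n,ord}$-module, say $B = \mathbb{T}^{n,ord}\,b_{0}$. Combining this with the vanishing of the annihilator of $B$, the surjection $\mathbb{T}^{n,ord}\twoheadrightarrow B$, $t \mapsto t b_{0}$, is injective, so $B$ is free of rank one. Since $b_{0}$ is a nonzero element of $Q(\mathbb{T}^{n,ord})$, it is invertible there; conjugating $\rho_{Q(\mathbb{T}^{n,ord})}$ by $\mathrm{diag}(b_{0},1)$ replaces the $(1,2)$-entry $b$ by $b_{0}^{-1}b$, and the image of this new entry is $b_{0}^{-1}B = \mathbb{T}^{n,ord}\cdot 1$, as required.

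For part (ii), the change of basis is diagonal, hence preserves both the fact that $\rho_{Q(\mathbb{T}^{n,ord})}(\gamma'_{0})$ is diagonal and the lower-triangular form of the restriction to $G_{F_{v^{\sigma}}}$, so the new basis remains adapted. To check that the representation now takes values in $\GL_2(\mathbb{T}^{n,ord})$: the diagonal entries $a,d$ lie in $\mathbb{T}^{n,ord}$ by the explicit formula of Lemma \ref{Pseudo-character} applied to $\Ps_{\mathbb{T}^{n,ord}}$, using that the eigenvalues of $\rho(\gamma'_{0})$ lie in $\mathbb{T}^{n,ord}$ by Hensel's lemma and $\phi(\gamma'_{0})\ne \phi^{\sigma}(\gamma'_{0})$; the new $b$-entries lie in $B=\mathbb{T}^{n,ord}$ by construction; and for $c$, writing $1=\sum_{i}t_{i}b(g_{i})$ with $t_{i}\in\mathbb{T}^{n,ord}$ and $g_{i}\in G_{F}$, we obtain
$$c(g)=\sum_{i}t_{i}\,b(g_{i})\,c(g)\in\mathbb{T}^{n,ord},$$
since each product $b(g_{i})c(g)$ is a value of the pseudo-deformation entry $\tilde{x}$ associated to $\pi_{\mathbb{T}^{n,ord}}$.

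To finish, I would identify the mod-$\gm_{\mathbb{T}^{n,ord}}$ reduction of this integral $\rho_{Q(\mathbb{T}^{n,ord})}$ with $\tilde{\rho}$ and verify the nearly ordinary condition: residually $a\equiv\phi$, $d\equiv\phi^{\sigma}$, and $c\equiv 0$ (since $\tilde{x}\equiv 0 \bmod \gm_{\mathbb{T}^{n,ord}}$), so the reduction is an upper-triangular nonsplit extension of $\phi^{\sigma}$ by $\phi$ whose class vanishes at $G_{F_{v^{\sigma}}}$; by the one-dimensionality of $\rH^{1}(F,\phi/\phi^{\sigma})_{G_{F_{v^{\sigma}}}}$, this class is a nonzero scalar multiple of that of $\tilde{\rho}$, and a further diagonal rescaling absorbs the scalar. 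The nearly-ordinary property at $v$ and $v^{\sigma}$ is inherited from the construction of $\rho_{Q(\mathbb{T}^{n,ord})}$ via Hida's characters $\psi''_{\mathbb{T}^{n,ord},v}$ and $\psi''_{\mathbb{T}^{n,ord},v^{\sigma}}$. The main subtlety, which I expect to require the most care, is precisely the last identification of the residual class with $\tilde{\rho}$ inside the correct one-dimensional Selmer subspace; once that step is secured, the corollary follows immediately from what was done in (i) and in the entry-by-entry integrality check.
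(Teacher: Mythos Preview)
Your approach is essentially the same as the paper's: both use Proposition~\ref{generalisation} together with the one-dimensionality of $\rH^{1}(F,\phi/\phi^{\sigma})_{G_{F_{v^\sigma}}}$ (from Proposition~\ref{tg}) and Nakayama to prove $B$ is free of rank one, then rescale the basis and identify the reduction with $\tilde\rho$ via uniqueness of the nonsplit extension class in that one-dimensional Selmer space. Your entry-by-entry integrality check for $a,d,c$ is in fact more explicit than the paper's, which simply asserts that the rescaling lands in $\GL_2(\mathbb{T}^{n,ord})$.

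One point deserves more care than you gave it. You say the nearly-ordinary property at $v$ is ``inherited from the construction'', but Hida's result only furnishes a $G_{F_v}$-stable line in $M_{Q(\mathbb{T}^{n,ord})}$, not a priori a free direct summand of the integral lattice $M_{\mathbb{T}^{n,ord}}$; after your diagonal rescaling (which was tailored to $v^\sigma$, not to $v$) there is no reason the $G_{F_v}$-filtration should be integral. The paper treats this step separately: it invokes the argument of \cite[Proposition~5.1]{D-B}, which uses $\phi(\Frob_v)\ne\phi^\sigma(\Frob_v)$ and Hensel's lemma to produce an integral $G_{F_v}$-stable direct summand of $M_{\mathbb{T}^{n,ord}}$ with quotient character lifting $\phi^\sigma_{|G_{F_v}}$. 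This descent from $Q(\mathbb{T}^{n,ord})$ to $\mathbb{T}^{n,ord}$ at $v$ is precisely what your last paragraph elides (at $v^\sigma$ there is no issue, since your adapted basis already makes the restriction lower-triangular). The subtlety you flagged --- the identification of the residual class with $\tilde\rho$ --- is actually the routine part; it is the integral near-ordinariness at $v$ that requires the extra input.
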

\begin{proof}\

i) Since $\Ext^1_{\bar{\Q}_p[G_F]}(\phi^{\sigma},\phi)_{G_{F_{v^\sigma}}} \simeq \rH^{1}(F,\phi/\phi^{\sigma})_{G_{F_{v^\sigma}}}$, the propositions \ref{tg} and \ref{generalisation} imply that the dimension of $ \Ext^1_{\bar{\Q}_p[G_F]}(\phi^{\sigma},\phi)_{G_{F_{v^\sigma}}}$ is one and $\dim_{\bar{\Q}_p} B \otimes \bar{\Q}_p \leq 1$.  We proved in the proposition above that $B$ is a non zero finite type $\mathbb{T}^{n,ord}$-module, then Nakayama's lemma implies that $B$ is a monogenic $\mathbb{T}^{n,ord}$-module, and the fact that the annihilator of $B$ in $\mathbb{T}^{n,ord}$ is zero implies that $B$ is a free $\mathbb{T}^{n,ord}$-module of rank one. By rescaling the basis $(e''_{1},e''_{2})$, the values of $\rho_{Q(\mathbb{T}^{n,ord})}$ will be in $\GL_2(\mathbb{T}^{n,ord})$.

ii) Since any representation isomorphic to an extension of $\phi^{\sigma}$ by $\phi$ trivial on $G_{F_{v^\sigma} }$ is necessarily isomorphic to $\tilde{\rho}$, then (i) implies that $\rho_{Q(\mathbb{T}^{n,ord})}:G_F \rightarrow \GL_2(\mathbb{T}^{n,ord})$ is a deformation of $\tilde{\rho}$ and by construction $\rho_{Q(\mathbb{T}^{n,ord})}$ is nearly ordinary at $v^{\sigma}$. 

By the work of Hida \cite{hida90}, $\rho_{Q(\mathbb{T}^{n,ord})}:G_F \rightarrow \GL_2(Q(\mathbb{T}^{n,ord}))$ is nearly ordinary at $v$. Since $\phi(\Frob_v) \ne \phi^{\sigma}(\Frob_v)$, we can use the exact same argument that is given in the proof of Proposition \cite[5.1]{D-B} to prove that $\rho_{Q(\mathbb{T}^{n,ord})}:G_F \rightarrow \GL_2(\mathbb{T}^{n,ord})$ is ordinary at $v$.

\end{proof} 

\subsection{Tangent space of $\cD^{n.ord}$}\

Let $t_{\cD^{n.ord}}$ (resp. $t_{\cD^{ord}}$) denote the tangent space of $\cD^{n.ord}$ (resp. $\cD^{ord}$). The choice of the basis $(e'_{1},e'_{2})$ of $M_{\bar{\Q}_p}$ defines in lemma \ref{eta} identifies $\End_{\bar{\Q}_p}(M_{\bar{\Q}_p})$ with $M_2(\bar{\Q}_p)$. Since $\tilde{\rho}_{\mid G_{F_{v^{\sigma}}}}$ splits completely in the basis $(e'_{1},e'_{2})$, we have the following decomposition of $\bar{\Q}_p[G_{F_{v^{\sigma}}}]$-modules 

\begin{equation}\label{morphismb}
\begin{split}
(\ad \tilde{\rho})_{|G_{F_{v^{\sigma}}}} &=  \bar{\Q}_p \oplus \phi/ \phi^{\sigma} \oplus  \phi^{\sigma} /  \phi \oplus  \bar{\Q}_p \\
 \left(\begin{smallmatrix}
a&b\\
c&d\end{smallmatrix}\right) & \longrightarrow (a,b,c,d)
\end{split}
\end{equation}

Let $\mathrm{Fil}(\ad \tilde{\rho})$ be the subspace of $\ad \tilde{\rho}$ given by the following elements
$$\mathrm{Fil}(\ad \tilde{\rho})=\{g\in \End_{\bar{\Q}_p}(M_{\bar{\Q}_p}) \mid g(e_1) \subset (e_1)  \}.$$

By composing the restriction morphism $\rH^{1}(F,\ad \tilde{\rho}) \rightarrow \rH^{1}(F_{v^{\sigma}},\ad \tilde{\rho})$ and the morphism $b^{*}:\rH^{1}(F_{v^{\sigma}},\ad \tilde{\rho}) \rightarrow \rH^{1}(F_{v^{\sigma}}, \phi/\phi^{\sigma}) $ (obtained by functoriality from (\ref{morphismb})), we obtain the natural map:

\begin{equation} \label{locp}
\rH^{1}(F,\ad \tilde{\rho}) \overset{B^{*}}{\longrightarrow} \rH^{1}(F,\phi/\phi^{\sigma})
\end{equation}

Since $\tilde{\rho}$ is reducible, $\mathrm{Fil}(\ad \tilde{\rho})$ is preserved by the action of $\ad \tilde{\rho}$ and we have a natural $G_{F}$-equivariant map given by the quotient of $\ad \tilde{\rho}$ by $\mathrm{Fil}(\ad \tilde{\rho})$:
\begin{equation} \label{locp}
\begin{split}
  \ad \tilde{\rho} & \overset{C}{\longrightarrow} \bar{\Q}_p[\phi^{\sigma}/ \phi]  \\
 \left(\begin{smallmatrix}a & b \\ c & d \end{smallmatrix}\right) &  \mapsto c   
\end{split}
\end{equation}

Let $\mathfrak{r}:\rH^1(F,\phi^{\sigma}/\phi) \rightarrow \rH^1(F_{v},\phi^{\sigma}/\phi)$ denote the natural morphism given by the restriction of the $1$-cocycles to $G_{F_v}$ and $C^{*}: \rH^1(F,\ad \tilde{\rho}) \overset{ C^{*}}{\longrightarrow}  \rH^1(F,\phi^{\sigma}/\phi)$ be the morphism obtained by functoriality from (\ref{locp}). By using a standard argument of the deformation theory, we obtain the following result.
 
\begin{lemma} \label{lemmatd} We have the following isomorphism:

  $$  t_{\cD^{n.ord}}= \ker \left( \rH^1(F,\ad \tilde{\rho}) \overset{(\mathfrak{r}\circ C^{*},B^{*})}{\longrightarrow}  (\rH^1(F_{v},\phi^{\sigma}/\phi) \oplus \rH^1(F_{v^{\sigma}}, \phi/\phi^{\sigma})) \right)$$

\end{lemma}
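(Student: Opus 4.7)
My plan is the standard translation of a deformation functor's tangent space into a Selmer-type subgroup of a global Galois cohomology group. Writing any infinitesimal deformation of $\tilde{\rho}$ as $\rho_\epsilon(g) = (I + \epsilon u(g))\tilde{\rho}(g)$ with $u : G_F \to \ad\tilde{\rho}$, the multiplicativity of $\rho_\epsilon$ is equivalent to the $1$-cocycle condition on $u$, and two such deformations are strictly equivalent (via conjugation by $I + \epsilon X$) iff the cocycles differ by a coboundary. Hence the tangent space of the \emph{unrestricted} deformation functor is $\rH^1(F, \ad\tilde{\rho})$, and it remains to cut out the nearly-ordinary conditions at $v$ and $v^\sigma$.

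Next I would encode the condition at $v$ cohomologically. Since $\tilde{\rho}|_{G_{F_v}}$ already preserves $\bar{\Q}_p e_1$ with character $\phi|_{G_{F_v}}$, and $\mathrm{Fil}(\ad\tilde{\rho})$ is precisely the Lie algebra of the stabilizer of that line, the quotient map in the short exact sequence $0 \to \mathrm{Fil}(\ad\tilde{\rho}) \to \ad\tilde{\rho} \to \phi^\sigma/\phi \to 0$ is exactly $C$. A direct check (of the sort appearing in the proof of Proposition \ref{tg}) shows that $\rho_\epsilon|_{G_{F_v}}$ admits a stable line lifting $\bar{\Q}_p e_1$ iff, after modifying $u|_{G_{F_v}}$ by a coboundary with values in $\mathrm{Fil}(\ad\tilde{\rho})$, the restricted cocycle becomes $\mathrm{Fil}$-valued; chasing the long exact sequence this is exactly the vanishing of $(\mathfrak{r}\circ C^*)([u])$ in $\rH^1(F_v, \phi^\sigma/\phi)$. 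The quotient character of $\rho_\epsilon|_{G_{F_v}}$ is automatically a lift of $\phi^\sigma|_{G_{F_v}}$ because $u$ is $\ad\tilde{\rho}$-valued, so no additional condition is needed.

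For the condition at $v^\sigma$, I pass to the local split basis $(e'_1, e'_2)$ from Lemma \ref{eta}(iii), in which $\tilde{\rho}|_{G_{F_{v^\sigma}}}$ is diagonal and the line to be lifted is the $G_{F_{v^\sigma}}$-stable one, namely $\bar{\Q}_p e'_2$. Writing $u|_{G_{F_{v^\sigma}}}$ in this basis and using the decomposition (\ref{morphismb}), the only obstruction to conjugating it into the prescribed lower-triangular form is the upper-right $b$-entry, which is a cocycle valued in $\phi/\phi^\sigma$. Its class is exactly $B^*([u])$, and its vanishing in $\rH^1(F_{v^\sigma}, \phi/\phi^\sigma)$ is equivalent to the nearly-ordinary condition at $v^\sigma$. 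Intersecting the two obstructions gives the claimed description of $t_{\cD^{n.ord}}$.

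The step requiring the most care is the passage to the split basis at $v^\sigma$: the filtration $\mathrm{Fil}(\ad\tilde{\rho})$ and the map $C$ are defined using the global basis $(e_1, e_2)$, in which $\tilde{\rho}|_{G_{F_{v^\sigma}}}$ is generally \emph{not} diagonal, whereas the map $B^*$ is most naturally defined via the local split. Since conjugation by a local base change does not affect the cohomology class of $u|_{G_{F_{v^\sigma}}}$, the map $B^*$ descends to a well-defined map on $\rH^1(F, \ad\tilde{\rho})$; and one must verify, using Lemma \ref{eta}(iii), that the direction cut out by $B^*$ really corresponds to the line stabilized in the prescribed nearly-ordinary form (with quotient character lifting $\phi|_{G_{F_{v^\sigma}}}$), which is a pure bookkeeping check.
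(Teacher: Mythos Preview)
Your proposal is correct and is precisely the ``standard argument of the deformation theory'' that the paper invokes without proof: you translate infinitesimal deformations into $1$-cocycles, then impose the nearly-ordinary condition at $v$ via the quotient map $C$ (since $\tilde{\rho}$ is globally upper triangular in the basis stabilizing $e_1$) and at $v^\sigma$ via the $b$-component in the split local basis $(e'_1,e'_2)$ of Lemma~\ref{eta}(iii). Your care with the two different bases (global for $C^*$, local split for $B^*$) is exactly the bookkeeping the paper silently assumes.
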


\medskip

Let $P=\bar{\Q}_p[\phi^{\sigma}/\phi]$ be the $\bar{\Q}_p[G_F]$-module of dimension one over $\bar{\Q}_p$ and on which $ G_F$ acts by $\phi^{\sigma}/\phi$. We remark that the quotient of $\ad \tilde{\rho}$ by $\mathrm{Fil}(\ad \tilde{\rho})$ is isomorphic to $P$ as $\bar{\Q}_p[G_F]$-module.  Thus, we have an exact sequence of $\bar{\Q}_p[G_F]$-module:

\begin{equation}\label{tgsanscond}
0\rightarrow \mathrm{Fil}(\ad \tilde{\rho})\rightarrow \ad \tilde{\rho} \rightarrow P \rightarrow 0
\end{equation}
 
Since  $\phi^{\sigma}/\phi \ne 1$, $\rH^{0}(G_{F}, P)=\{0\}$, there exists a long exacts sequence of cohomology:

\begin{equation}\label{exactadrho}
0\rightarrow \rH^{1}(F,\mathrm{Fil}(\ad \tilde{\rho})) \rightarrow \rH^{1}(F, \ad \tilde{\rho}) \rightarrow \rH^{1}(F, P) \rightarrow \rH^{2}(F,\mathrm{Fil}(\ad \tilde{\rho}))
\end{equation}

We will show that $\rH^{2}(F,\mathrm{Fil}(\ad \tilde{\rho}))$ is trivial. First, we will begin by computing the dimension of  $\rH^{1}(F,\mathrm{Fil}(\ad \tilde{\rho}))$ in order to use the Global Euler characteristic formula to deduce that $\rH^{2}(F,\mathrm{Fil}(\ad \tilde{\rho}))$ vanishes. 

Under the identification $\End_{\bar{\Q}_p}(M_{\bar{\Q}_p}) \simeq M_2(\bar{\Q}_p)$, $\mathrm{Fil}(\ad \tilde{\rho})$ is just the subspace of the upper triangular matrix of $\ad \tilde{\rho}$. Since $\tilde{\rho}$ is reducible, the subspace $$\mathrm{Fil}(\ad \tilde{\rho})_0:=\{g \in \End_{\bar{\Q}_p}(M_{\bar{\Q}_p}) \mid g(e_1)=0  \}$$  of  $\mathrm{Fil}(\ad \tilde{\rho})$ is stable by the action of $G_{F}$, and the adjoint action on this sub-space is given by $\phi/\phi^{\sigma}$. We can think $\mathrm{Fil}(\ad \tilde{\rho})_0$ as the subspace of $\mathrm{Fil}(\ad \tilde{\rho})$ given by the strict upper triangular matrix and it is isomorphic to $\bar{\Q}_p[\phi/\phi^{\sigma}]$ as $\bar{\Q}_p[G_F]$-module.

  Therefore, we obtain the following exact sequence of $\bar{\Q}_p[G_F]$-module:
$$0\rightarrow \bar{\Q}_p[\phi/\phi^{\sigma}] \rightarrow \mathrm{Fil}(\ad \tilde{\rho}) \rightarrow \bar{\Q}_p^2 \rightarrow 0$$

Hence, there exists a long exact cohomology sequence:
{\small 
\begin{equation}\label{long}
0\rightarrow H^{0}(F, \mathrm{Fil}(\ad \tilde{\rho})) \rightarrow H^{0}(F, \bar{\Q}_p^2)\overset{\delta}{\longrightarrow}\rH^{1}(F,\phi/\phi^{\sigma}) \rightarrow \rH^{1}(F, \mathrm{Fil}(\ad \tilde{\rho})) \rightarrow \rH^{1}(F, \bar{\Q}_p^2) \rightarrow \rH^{2}(F,\phi/\phi^{\sigma})
\end{equation}}

\begin{lemma}\label{obstruction}
The cohomology group $\rH^{2}(F,\phi/\phi^{\sigma})$ is trivial and one always has $\dim_{\bar{\Q}_p} \rH^{1}(F, \mathrm{Fil}(\ad \tilde{\rho}))=3$.
\end{lemma}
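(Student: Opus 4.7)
The plan is to combine Poitou--Tate global duality (to handle the $\rH^2$-vanishings) with Tate's global Euler--Poincar\'e characteristic formula (to compute the relevant $\rH^1$-dimensions), and then assemble the count from the long exact sequence~(\ref{long}).

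I would first establish $\rH^2(F,\phi/\phi^\sigma)=0$. The Tate dual of the character $\phi/\phi^\sigma$ is $\phi^\sigma/\phi$ twisted by $\bar{\Q}_p(1)$; since $\phi/\phi^\sigma$ has finite order while the $p$-adic cyclotomic character has infinite order on $G_{F,S}$, this twisted character is non-trivial on $G_{F,S}$, so its invariants vanish. Poitou--Tate global duality then forces the kernel of the localization map
\begin{equation*}
\rH^2(G_{F,S},\phi/\phi^\sigma)\longrightarrow \bigoplus_{v\in S}\rH^2(G_{F_v},\phi/\phi^\sigma)
\end{equation*}
to vanish. At every finite $v\in S$ the local cohomology $\rH^2(G_{F_v},\phi/\phi^\sigma)$ is zero by local Tate duality and the same finite-order/infinite-order dichotomy, and at the archimedean places the Tate cohomology $\hat\rH^2$ is annihilated by $2$, hence vanishes over $\bar{\Q}_p$. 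Combining, $\rH^2(F,\phi/\phi^\sigma)=0$, which is the first half of the lemma; an identical argument with trivial coefficients gives $\rH^2(F,\bar{\Q}_p)=0$.

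Next I would apply the global Euler--Poincar\'e formula
\begin{equation*}
\chi(G_{F,S},M)\;=\;-[F:\Q]\dim_{\bar{\Q}_p}M+\sum_{v\mid\infty}\dim_{\bar{\Q}_p}M^{G_{F_v}}.
\end{equation*}
The key input is that $\phi/\phi^\sigma$ is totally odd: writing $c_1,c_2$ for the complex conjugations attached to the two real places of $F$, one has $\phi^\sigma(c_i)=\phi(\sigma c_i\sigma^{-1})=\phi(c_{3-i})$, and oddness of $\rho=\Ind_{\Q}^F\phi$ forces $\phi(c_1)\phi(c_2)=\det\rho(c_i)=-1$, so $(\phi/\phi^\sigma)(c_i)=-1$. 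Hence $M^{G_{F_v}}=0$ at both real places, $\chi(F,\phi/\phi^\sigma)=-2$, and combined with $\dim\rH^0=0$ (since $\phi\neq\phi^\sigma$) and the $\rH^2$-vanishing just established, we obtain $\dim\rH^1(F,\phi/\phi^\sigma)=2$. Applied to the trivial module $\bar{\Q}_p$ the formula gives $\chi=0$, $\dim\rH^0=1$, $\dim\rH^2=0$, hence $\dim\rH^1(F,\bar{\Q}_p)=1$ and therefore $\dim\rH^1(F,\bar{\Q}_p^2)=2$.

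The last ingredient is $\dim\rH^0(F,\mathrm{Fil}(\ad\tilde\rho))=1$, which I would check directly: an invariant matrix $\left(\begin{smallmatrix}a&b\\0&d\end{smallmatrix}\right)$ must satisfy $b(1-(\phi/\phi^\sigma)(g))=(d-a)\eta(g)/\phi^\sigma(g)$ for every $g\in G_F$, and if $d\neq a$ this would express $\eta$ as a coboundary, contradicting non-triviality of its cohomology class; hence $a=d$, after which $\phi\neq\phi^\sigma$ forces $b=0$, leaving only the scalars. Plugging the four dimensions into~(\ref{long}), the connecting map $\delta\colon\rH^0(F,\bar{\Q}_p^2)\to\rH^1(F,\phi/\phi^\sigma)$ has image of dimension $2-1=1$, and the vanishing of $\rH^2(F,\phi/\phi^\sigma)$ just proved makes the subsequent arrow $\rH^1(F,\mathrm{Fil}(\ad\tilde\rho))\to\rH^1(F,\bar{\Q}_p^2)$ surjective. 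A dimension chase then yields
\begin{equation*}
\dim_{\bar{\Q}_p}\rH^1(F,\mathrm{Fil}(\ad\tilde\rho))\;=\;(2-1)+2\;=\;3.
\end{equation*}
The main technical obstacle is the careful handling of the archimedean local contributions inside the Poitou--Tate machinery for $\bar{\Q}_p$-coefficients; once that is granted, everything else reduces to a routine diagram chase.
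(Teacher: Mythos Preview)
Your Poitou--Tate step contains a genuine gap. You argue that $\rH^0(G_{F,S},M^*(1))=0$ forces the kernel of the localization map on $\rH^2$ to vanish, but Poitou--Tate says no such thing: the nine-term sequence identifies $\rH^0(G_{F,S},M^*(1))^\vee$ with the \emph{cokernel} of $\rH^2(G_{F,S},M)\to\bigoplus_v\rH^2(G_{F_v},M)$, while the \emph{kernel} is $\Sha^2(M)\cong\Sha^1(G_{F,S},M^*(1))^\vee$. Vanishing of $\rH^0$ of the Tate dual does not give vanishing of $\Sha^1$ of the Tate dual. Since you have already shown all local $\rH^2$'s vanish, your argument as written reduces to the unproved assertion $\Sha^1(G_{F,S},\bar\Q_p(\phi^\sigma/\phi)(1))=0$, which is a Leopoldt-type statement for the (generally non-abelian over $\Q$) splitting field of $\phi/\phi^\sigma$ and is not available by soft duality arguments.

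The paper proceeds in the opposite direction and thereby avoids this issue: it takes $\dim_{\bar\Q_p}\rH^1(F,\phi/\phi^\sigma)=2$ as input from Proposition~\ref{tg} (ultimately from \cite{D-B}, where the relevant cohomological computation rests on the Baker--Brumer $p$-adic transcendence theorem), and then the Euler characteristic formula immediately yields $\rH^2(F,\phi/\phi^\sigma)=0$. Your reversal of the logic would be attractive if it worked, because it would make the lemma self-contained; but the transcendence input is genuinely needed somewhere, and your Poitou--Tate shortcut does not supply it. The same remark applies to your claim $\rH^2(F,\bar\Q_p)=0$: that is Leopoldt for the real quadratic field $F$, which is known (Brumer), but again not by the argument you wrote. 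The remaining parts of your proposal---the direct computation of $\rH^0(F,\mathrm{Fil}(\ad\tilde\rho))=\bar\Q_p$, the oddness of $\phi/\phi^\sigma$, and the final dimension count from the long exact sequence---are correct and match the paper.
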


\begin{proof}

Global Euler characteristic formula implies that: 

\begin{equation}
\begin{split}
\dim & \rH^{0}(F, \phi/\phi^{\sigma})-\dim \rH^{1}(F, \phi/\phi^{\sigma})+\dim \rH^{2}(F, \phi/\phi^{\sigma}) \\
&=\sum_{v\mid \infty} \dim (\bar{\Q}_p)^{G_{F_{v}}} - [F:\Q] 
\end{split}
\end{equation}

Since $\phi/\phi^{\sigma}$ is a totally odd character, the relation above yields that: $$-\dim_{\bar{\Q}_p} \rH^{1}(F, \phi/\phi^{\sigma})+\dim_{\bar{\Q}_p} \rH^{2}(F, \phi/\phi^{\sigma}) = -2$$ 

It follows from the proposition \ref{tg} that $\dim_{\bar{\Q}_p} \rH^{1}(F, \phi/\phi^{\sigma})=2$, so $\rH^{2}(F, \phi/\phi^{\sigma})$ is trivial. Finally, $F$ is a real quadratic extension, so $\dim_{\bar{\Q}_p} \rH^{1}(F, \bar{\Q}_p^2)=2$, and hence the long exact sequence (\ref{long}) implies that $\dim_{\bar{\Q}_p} \rH^{1}(F, \mathrm{Fil}(\ad \tilde{\rho}))=3$.
\end{proof}

\begin{cor}\label{ex-s}\
\begin{enumerate}
\item  The cohomology group $\rH^{2}(F,\mathrm{Fil}(\ad \tilde{\rho}))$ is trivial. 
\item There exists an exact sequence $$0\rightarrow \rH^{1}(F,\mathrm{Fil}(\ad \tilde{\rho})) \rightarrow \rH^{1}(F, \ad \tilde{\rho}) \overset{C^{*}}{\longrightarrow} \rH^{1}(F, \phi^{\sigma}/\phi)\rightarrow 0$$
\end{enumerate}
 \end{cor}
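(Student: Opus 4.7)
The plan is to derive both statements from long exact cohomology sequences and the vanishing results already recorded in Lemma \ref{obstruction}. I would first extend the long exact sequence (\ref{long}) one step further to get part (i), and then use (i) to make the connecting homomorphism in the long exact sequence (\ref{exactadrho}) vanish, which will yield (ii).

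For part (i), I would prolong (\ref{long}) to
$$\cdots \to \rH^{1}(F, \bar{\Q}_p^{2}) \to \rH^{2}(F, \phi/\phi^{\sigma}) \to \rH^{2}(F, \mathrm{Fil}(\ad \tilde{\rho})) \to \rH^{2}(F, \bar{\Q}_p^{2}) \to \cdots$$
Since Lemma \ref{obstruction} gives $\rH^{2}(F, \phi/\phi^{\sigma})=0$, the group $\rH^{2}(F, \mathrm{Fil}(\ad \tilde{\rho}))$ injects into $\rH^{2}(F, \bar{\Q}_p^{2})$, so it remains to prove $\rH^{2}(F, \bar{\Q}_p) = 0$. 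For this I would invoke the Global Euler--Poincar\'e characteristic formula for the trivial one-dimensional representation over the real quadratic field $F$, which gives
$$\dim \rH^{0}(F, \bar{\Q}_p) - \dim \rH^{1}(F, \bar{\Q}_p) + \dim \rH^{2}(F, \bar{\Q}_p) = -[F:\Q] + 2 = 0,$$
and combine with $\dim \rH^{0}=1$ and $\dim \rH^{1}=1$ (the latter being half of $\dim \rH^{1}(F, \bar{\Q}_p^{2}) = 2$ already used in Lemma \ref{obstruction}, which amounts to Leopoldt's conjecture for the abelian field $F$) to force $\dim \rH^{2} = 0$.

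For part (ii), I would write out the long exact cohomology sequence (\ref{exactadrho}) attached to (\ref{tgsanscond}):
$$\rH^{0}(F, P) \to \rH^{1}(F, \mathrm{Fil}(\ad \tilde{\rho})) \to \rH^{1}(F, \ad \tilde{\rho}) \overset{C^{*}}{\longrightarrow} \rH^{1}(F, \phi^{\sigma}/\phi) \to \rH^{2}(F, \mathrm{Fil}(\ad \tilde{\rho})),$$
identifying $\rH^{1}(F, P) = \rH^{1}(F, \phi^{\sigma}/\phi)$ and noting that the middle arrow is $C^{*}$ by functoriality of the quotient map $C$ defined in (\ref{locp}). The leftmost term vanishes because $\phi^{\sigma}/\phi \ne 1$, and the rightmost term vanishes by part (i); the short exact sequence claimed in (ii) follows at once. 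The only genuinely nontrivial input will be the vanishing in part (i), which rests on the input $\dim \rH^{1}(F, \bar{\Q}_p) = 1$; every other step is pure bookkeeping with long exact sequences.
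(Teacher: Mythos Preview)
Your proof is correct. Part (ii) is exactly what the paper does. For part (i) you take a slightly different route: the paper applies the global Euler--Poincar\'e formula directly to the three-dimensional module $\mathrm{Fil}(\ad\tilde\rho)$ (using $\dim\rH^0=1$, $\dim\rH^1=3$ from Lemma~\ref{obstruction}, and the computation of the local terms at infinity from total oddness of $\tilde\rho$) to conclude $\dim\rH^2=0$ in one shot. You instead extend the long exact sequence~(\ref{long}) to degree two, reduce to $\rH^2(F,\bar\Q_p)=0$, and then apply Euler--Poincar\'e to the trivial one-dimensional module. Both arguments rest on the same substantive input (Leopoldt for the real abelian field $F$, already invoked in Lemma~\ref{obstruction} as $\dim\rH^1(F,\bar\Q_p^2)=2$); the paper's version is marginally more direct, while yours makes the dependence on $\rH^2(F,\bar\Q_p)=0$ explicit.
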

\begin{proof}\
i)Global Euler characteristic formula implies that: \begin{equation}
\begin{split}
\dim_{\bar{\Q}_p} & \rH^{0}(F, \mathrm{Fil}(\ad \tilde{\rho}))-\dim_{\bar{\Q}_p} \rH^{1}(F, \mathrm{Fil}(\ad \tilde{\rho}))+\dim_{\bar{\Q}_p} \rH^{2}(F, \mathrm{Fil}(\ad \tilde{\rho}))\\
&=\sum_{v\mid \infty} \dim_{\bar{\Q}_p} ( \mathrm{Fil}(\ad \tilde{\rho}))^{G_{F_{v}}} - [F:\Q] \dim_{\bar{\Q}_p} \mathrm{Fil}(\ad \tilde{\rho})
\end{split}
\end{equation} 

Hence, the result follows immediately from the fact that $\tilde{\rho}$ is a totally odd representation and $\dim_{\bar{\Q}_p} \rH^{1}(F, \mathrm{Fil}(\ad \tilde{\rho}))=3$.

ii) Since $\rH^{2}(F,\mathrm{Fil}(\ad \tilde{\rho}))=0$, the longue exact sequence (\ref{exactadrho}) is unobstructed.

\end{proof}

\begin{thm}\label{tg R^n}
One always has $\dim_{\bar{\Q}_p} t_{\cD^{n.ord}}\leq 3$ and $\dim_{\bar{\Q}_p} t_{\cD^{ord}} \leq 1$.

\end{thm}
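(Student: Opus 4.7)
The plan is to bound $\dim_{\bar{\Q}_p} t_{\cD^{n.ord}}$ and $\dim_{\bar{\Q}_p} t_{\cD^{ord}}$ using the cohomological identifications in Lemma \ref{lemmatd}, together with dimension counts on the source and target of the associated maps.

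Starting from Lemma \ref{lemmatd}, $t_{\cD^{n.ord}}$ is identified with the kernel of $\Phi = (\mathfrak{r}\circ C^{*}, B^{*}) : \rH^1(F, \ad \tilde{\rho}) \to \rH^1(F_v, \phi^\sigma/\phi) \oplus \rH^1(F_{v^\sigma}, \phi/\phi^\sigma)$. I first compute the dimensions. Combining the short exact sequence of Corollary \ref{ex-s}(ii) with Lemma \ref{obstruction} ($\dim \rH^1(F, \mathrm{Fil}(\ad \tilde{\rho})) = 3$) and the analogous global Euler characteristic argument applied to the totally odd character $\phi^\sigma/\phi$ (yielding $\dim \rH^1(F, \phi^\sigma/\phi) = 2$), I obtain $\dim \rH^1(F, \ad \tilde{\rho}) = 5$. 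Local Euler characteristic at both primes above $p$ (which split over $\Q_p$) gives $\dim \rH^1(F_v, \phi^\sigma/\phi) = \dim \rH^1(F_{v^\sigma}, \phi/\phi^\sigma) = 1$, so the target of $\Phi$ has dimension $2$. This yields $\dim t_{\cD^{n.ord}} \geq 5 - 2 = 3$ automatically.

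The heart of the proof is the reverse inequality $\dim t_{\cD^{n.ord}} \leq 3$, equivalent to the surjectivity of $\Phi$. Each component is individually surjective: for $\mathfrak{r} \circ C^{*}$, the map $C^{*}$ is surjective by Corollary \ref{ex-s}(ii), and by Proposition \ref{tg}(ii) the restriction $\mathfrak{r}$ has kernel of dimension $1$, so its image is a full $1$-dimensional subspace; a parallel argument with the roles of $v$ and $v^\sigma$ (and $\phi$ and $\phi^\sigma$) exchanged by the nontrivial element $\sigma \in \Gal(F/\Q)$ yields surjectivity of $B^{*}$. To promote the individual surjectivities to the joint surjectivity of $\Phi$, I would invoke the Greenberg--Wiles / Poitou--Tate formula: the cokernel of $\Phi$ is controlled by a dual Selmer group attached to the Tate twist $\ad \tilde{\rho} \otimes \chi_{\mathrm{cyc}}$, and this group vanishes because $\rH^0(F, \ad \tilde{\rho}(\chi_{\mathrm{cyc}})) = 0$ (the cyclotomic character has infinite order while the eigencharacters of $\ad \tilde{\rho}$ are finite order, leaving no invariants).

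For the ordinary case, the condition strengthens near-ordinariness by further requiring that the diagonal characters $\psi''_{v,A}$, $\psi''_{v^\sigma,A}$ be unramified. At the cocycle level this imposes vanishing of the bottom-right diagonal entry of the cocycle on the inertia subgroups $I_v$ and $I_{v^\sigma}$. Each such condition is a single linear constraint valued in a $1$-dimensional cohomology group (the relevant $\rH^1(I_\bullet, \bar{\Q}_p)^{\Frob}$), and the two conditions are independent because $v \neq v^\sigma$. Subtracting these from the nearly ordinary tangent space yields $\dim t_{\cD^{ord}} \leq 3 - 2 = 1$. The main obstacle is the joint surjectivity of $\Phi$: while both components are individually surjective by elementary cohomological dimension counting, excluding that their joint images form a ``diagonal'' $1$-dimensional subspace of the $2$-dimensional target requires the Poitou--Tate / Greenberg--Wiles duality input above.
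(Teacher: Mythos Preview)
Your overall strategy---compute $\dim\rH^1(F,\ad\tilde\rho)=5$, compute the target dimension, and argue that $\Phi$ is surjective---is reasonable in outline, but the execution of the surjectivity step has a genuine gap.

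The claim that ``the cokernel of $\Phi$ is controlled by a dual Selmer group $\ldots$ and this group vanishes because $\rH^0(F,\ad\tilde\rho(\chi_{\mathrm{cyc}}))=0$'' conflates two different objects. The Greenberg--Wiles formula relates $\dim\rH^1_{\mathcal L}$ and $\dim\rH^1_{\mathcal L^\perp}$ via a sum of local terms and the difference of two $\rH^0$'s; the vanishing of $\rH^0(F,\ad\tilde\rho(1))$ is only one ingredient and does \emph{not} by itself force the dual Selmer group (an $\rH^1$ with local conditions) to vanish. You would have to identify the dual local conditions $\mathcal L^\perp_v$, $\mathcal L^\perp_{v^\sigma}$ precisely and then prove $\rH^1_{\mathcal L^\perp}(F,\ad\tilde\rho(1))=0$, which you have not done. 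There is a further subtlety: $B^*$ is not a pure localization map but the composite of restriction to $G_{F_{v^\sigma}}$ with the projection onto the $b$-entry in a basis where $\tilde\rho_{|G_{F_{v^\sigma}}}$ splits; packaging this as a Selmer local condition requires care that is absent from your sketch. Finally, your ``parallel argument with $v$ and $v^\sigma$ exchanged by $\sigma$'' for the individual surjectivity of $B^*$ is not valid as written, since $\tilde\rho$ is a nonsplit extension of $\phi^\sigma$ by $\phi$ and the situation is genuinely asymmetric in the two places.

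The paper avoids all of this by exploiting the filtration directly rather than appealing to duality. It restricts attention first to $\rH^1(F,\mathrm{Fil}(\ad\tilde\rho))$ (dimension $3$) and shows by an explicit coboundary computation that the composite $B^*\circ i$ on the image of $\rH^1(F,\phi/\phi^\sigma)$ is nonzero: if $\left(\begin{smallmatrix}0&b\\0&0\end{smallmatrix}\right)$ lies in $\ker B^*$ then $b$ is a multiple of $\eta/\phi^\sigma$, and one checks that this cocycle is the coboundary of $\left(\begin{smallmatrix}-\lambda&0\\0&0\end{smallmatrix}\right)$. Hence $\ker(B^*|_{\mathrm{Fil}})$ has dimension $2$, and the exact sequence
\[
0\longrightarrow \ker\bigl(B^*|_{\rH^1(F,\mathrm{Fil}(\ad\tilde\rho))}\bigr)\longrightarrow t_{\cD^{n.ord}}\overset{C^*}{\longrightarrow}\rH^1(F,\phi^\sigma/\phi)_{G_{F_v}}
\]
together with $\dim\rH^1(F,\phi^\sigma/\phi)_{G_{F_v}}=1$ gives $\dim t_{\cD^{n.ord}}\le 3$. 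For the ordinary bound the paper imposes the additional map $A^*$ (the $a$-entry at $v^\sigma$), shows the resulting kernel $W_0$ inside $\mathrm{Fil}$ is one-dimensional and identifies it with homomorphisms $G_F\to\bar\Q_p$, then observes that the remaining unramifiedness-at-$v$ condition kills this last dimension. This is elementary and uses only the explicit shape of $\tilde\rho$; no Poitou--Tate input is needed.
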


\begin{proof}
The proposition \ref{obstruction} and the long exact sequence (\ref{long}) yield the following exact sequence : {\small 
\begin{equation}\label{long2}
 \rH^0(F,\bar{\Q}_p^2) \overset{\delta}{ \rightarrow} \rH^{1}(F,\phi/\phi^{\sigma}) \rightarrow \rH^{1}(F, \mathrm{Fil}(\ad \tilde{\rho}))  \overset{\mathfrak{i}}\rightarrow \rH^{1}(F, \bar{\Q}_p^2) \rightarrow 0
\end{equation} }
The image of $\delta$ is of dimension one over $\bar{\Q}_p$.

Now, we will add the local conditions at $v$ and $v^{\sigma}$ to ours $1$-cocycles:

\begin{align}
\begin{array}{ccccccccc}
  \rH^{0}(F, \bar{\Q}_p^2) &\overset{\delta} {\longrightarrow} & \rH^{1}(F,\phi/\phi^{\sigma}) & \overset{i}{\longrightarrow} & \rH^{1}(F, \mathrm{Fil}(\ad \tilde{\rho})) &  \overset{\mathfrak{i}}\longrightarrow & \rH^{1}(F, \bar{\Q}_p^2) & \longrightarrow & 0\\
  & & \bigg\downarrow{\mathfrak{r}'} & & \bigg\downarrow{ \scriptsize{B^{*}}} & &  & & \\
   &  & \rH^{1}(F_{v^{\sigma}},\phi/\phi^{\sigma}) & \overset{=}{\longrightarrow} &  \rH^{1}(F_{v^{\sigma}},\phi/\phi^{\sigma}) & &  &  & \end{array}
\end{align}
Where $\mathfrak{r}'$ is the map given by restriction of the $1$-co-cycles to $G_{F_{v^{\sigma}}}$.

First, we will proof that the composition of $B^*$ with $i$ is not trivial. We proceed by absurd :

Let $\rho_1$ be a $1$-co-cycle of $\rH^{1}(F, \mathrm{Fil}(\ad \tilde{\rho}))$ lying in the image of $i$, then $\rho_1(g)=\left(\begin{smallmatrix}0 & b \\ 0 & 0 \end{smallmatrix}\right)$ such that $b \in  \rH^{1}(F,\phi/\phi^{\sigma})$. Suppose that $\left(\begin{smallmatrix}0 & b \\ 0 & 0 \end{smallmatrix}\right)$ is a non trivial $1$-co-cycle (i.e $b$ isn't a $1$-co-boundary) and belonging to $ \ker(\rH^{1}(F, \mathrm{Fil}(\ad \tilde{\rho}))\overset{B^*}{\rightarrow} \rH^{1}(F_{v^{\sigma}},\phi/\phi^{\sigma}))$, so we can modify $b$ by a $1$-coboundary such that $b= \lambda \eta/\phi^{\sigma}$, where $\lambda \in \bar{\Q}_p^{\times}$ (see lemma \ref{eta}). A direct computation shows that  the $1$-cocycle $\rho_1(g)$ is the $1$-co-boundary given by $$g \rightarrow \tilde{\rho}(g) A \tilde{\rho}(g)^{-1}- A \text{ , where } A:=\left(\begin{smallmatrix}-\lambda & 0 \\ 0 & 0 \end{smallmatrix}\right).$$ Hence we have a contradiction since we have assumed that $\rho_1$ is not a $1$-coboundary, and therefore we have $$\dim_{\bar{\Q}_p}  \ker(\rH^{1}(F, \mathrm{Fil}(\ad \tilde{\rho}))\overset{B^*}{\rightarrow} \rH^{1}(F_{v^{\sigma}},\phi/\phi^{\sigma}))=2.$$ 

Moreover, we obtain from (ii) of corollary \ref{ex-s}, the lemma \ref{lemmatd} and the discussion above the following exact sequence :
\begin{equation}\label{tg:n.ord}
0 \rightarrow  (\ker(\rH^{1}(F, \mathrm{Fil}(\ad \tilde{\rho}))\overset{B^*}{\rightarrow} \rH^{1}(F_{v^{\sigma}},\phi/\phi^{\sigma})))  \overset{i}{\rightarrow}  t_{\cD^{n.ord}}  \overset{C^*}\longrightarrow  \rH^{1}(F,\phi^{\sigma}/\phi)_{G_{F_v}}
\end{equation}
  
Finally, since the vector space $\rH^{1}(F,\phi^{\sigma}/\phi)_{\mid G_{F_v}}$ is of dimension one, one may conclude from the exact sequence (\ref{tg:n.ord}) that $\dim_{\bar{\Q}_p} t_{\cD^{n.ord}}\leq 3$.

To compute the dimension of $ t_{\cD^{ord}}$, we need to add the extra conditions of ordinariness at $p$ to $t_{\cD^{n.ord}}$ and which appears in the filtration $\mathrm{Fil}(\ad \tilde{\rho})$ as follow:

From the relation (\ref{morphismb}), we have a map of $\bar{\Q}_p[G_{F_{v^{\sigma}}}]$-modules 

\begin{equation}\label{morphisma}
\begin{split}
\ad \tilde{\rho} & \longrightarrow  \bar{\Q}_p  \\
 \left(\begin{smallmatrix}
a&b\\
c&d\end{smallmatrix}\right) & \longrightarrow a
\end{split}
\end{equation}
 and inducing by functoriality a map $$A^{*}: \rH^{1}(F,\ad \tilde{\rho}) \rightarrow \Hom(G_{F_{v^{\sigma}}},\bar{\Q}_p).$$

Hence, we have the following inclusion {\small $$t_{\cD^{ord}} \subset W=\ker \left( \rH^1(F,\ad \tilde{\rho}) \overset{(\mathfrak{r}\circ C^{*},B^{*},A^{*})}{\longrightarrow}  (\rH^1(F_{v},\phi^{\sigma}/\phi) \oplus \rH^1(F_{v^{\sigma}}, \phi/\phi^{\sigma}) \oplus \Hom(G_{F_{v^{\sigma}}},\bar{\Q}_p)) \right)$$}

Let $W_0$ denote $\ker(\rH^{1}(F, \mathrm{Fil}(\ad \tilde{\rho}))\overset{(B^*,A^*)}{\longrightarrow} \rH^{1}(F_{v^{\sigma}},\phi/\phi^{\sigma})\oplus \Hom(G_{F_{v^{\sigma}}},\bar{\Q}_p) )$, so we have the exact sequence 

\begin{equation}\label{tg:pord}
0 \rightarrow W_0  \overset{i}{\rightarrow}  W\overset{C^*}\longrightarrow  \rH^{1}(F,\phi^{\sigma}/\phi)_{G_{F_v}}
\end{equation}

Therefore, the isomorphism $\ker(\rH^{1}(F, \mathrm{Fil}(\ad \tilde{\rho}))\overset{B^*}{\rightarrow} \rH^{1}(F_{v^{\sigma}},\phi/\phi^{\sigma})) \simeq \rH^{1}(F, \bar{\Q}_p^2)$ (coming from the discussion above) implies that $W_0$ is of dimension one over $\bar{\Q}_p$ and $\dim_{\bar{\Q}_p}W \leq 2$. 

On the other hand, any $1$-co-cycle  $\rho_1 \in W_0$ satisfies the condition of ordinariness at $p$ is necessarily an homomorphism in $\rH^{1}(F, \bar{\Q}_p)$ which is unramified at $v$, so trivial (since $F$ is a real quadratic extension of $\Q$). Finally, from the discussion above and the exact sequence (\ref{tg:pord}) we obtain: $$\dim_{\bar{\Q}_p} t_{\cD^{ord}}= \dim_{\bar{\Q}_p} W-1 \leq 1.$$

\end{proof}

\subsection*{Proof of the theorem \ref{q-base-change}}\

The $p$-nearly ordinary deformation $$\rho_{Q(\mathbb{T}^{n,ord})}:G_F \rightarrow \GL_2(\mathbb{T}^{n,ord})$$ of $\tilde{\rho}$ yields a canonical morphism :

\begin{equation}\label{R=T}
\cR^{n.ord} \rightarrow \mathbb{T}^{n.ord}
\end{equation}
Let $\mathfrak{n}_1:=\mathfrak{n}^{n.ord} \cap \varLambda^{n.ord}_{\cO}$ and $\widehat{\varLambda}^{n.ord}_{(1)}$ be the completed local ring of $\Spec \varLambda^{n.ord}_{\cO}$ at $\mathfrak{n}_1$. Since $h_F^{n.ord}$ is a torsion-free $\varLambda^{n.ord}_{\cO}$-module of finite type, we obtain after localization a finite torsion-free morphism $w: \widehat{\varLambda}^{n.ord}_{(1)} \rightarrow \mathbb{T}^{n.ord}$. On the other hand, the local ring $\mathbb{T}^{ord}$ is endowed naturally with structure of $\varLambda$-algebra coming from the finite flat morphism $\varLambda_\cO \rightarrow h_F$ (see \cite{hida90}).

The ring $\cR^{n.ord}$ has a canonical structure of $\widehat{\varLambda}^{n.ord}_{(1)} $-algebra (see \cite[\S 6.2]{Bet}), and the morphism (\ref{R=T}) is a morphism of $\widehat{\varLambda}^{n.ord}_{(1)} $-algebras. 

Moreover, the ring $\cR^{ord}_{\det \rho}:=\cR^{n.ord}/\gm_{ \widehat{\varLambda}^{n.ord}_{(1)}}  \cR^{n.ord} $ represents the largest $p$-ordinary quotient of $\cR^{n.ord}$ of determinant equal to $\det \tilde{\rho}$ (see \cite[\S 6.2]{Bet}).

\begin{prop}\label{summarize} One always has:

\begin{enumerate}
\item The morphism (\ref{R=T}) yields an isomorphism of complete local regular rings $\cR^{n.ord}\simeq \mathbb{T}^{n.ord}$.
\item There exists an isomorphism between local regular rings $\cR^{ord}\simeq \mathbb{T}^{ord}$.
\item There exists an isomorphism $\cR^{ord}\simeq \cR_{\tau=1}$.
\item There exists an isomorphism $\cR^{ord}_{\det \rho} \simeq  \mathbb{T}^{ord}/\gm_\varLambda \mathbb{T}^{ord} \simeq \mathbb{T}^{n.ord}/\gm_{\widehat{\varLambda}^{n.ord}_{(1)}} \mathbb{T}^{n.ord} $.
\end{enumerate}
\end{prop}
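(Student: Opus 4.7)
The plan is to use the canonical morphism (\ref{R=T}) as the backbone: first combine the tangent space bound from Theorem \ref{tg R^n} with the known Krull dimension of $\mathbb{T}^{n.ord}$ to force regularity in the nearly ordinary setting, and then deduce (ii)--(iv) by passing to appropriate quotients.

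For (i), I would first check that (\ref{R=T}) is surjective. Since $\Ps_{\mathbb{T}^{n.ord}} = \Tr \rho_{Q(\mathbb{T}^{n.ord})}$, by Chebotarev the operators $T_\ell = \Ps_{\mathbb{T}^{n.ord}}(\Frob_\ell)$ for $\ell \nmid Np$ topologically generate the trace subalgebra; it then remains to observe that the extra operators $U_v, U_{v^\sigma}$ are the distinguished eigenvalues of $\rho_{Q(\mathbb{T}^{n.ord})}(\Frob_v)$ and $\rho_{Q(\mathbb{T}^{n.ord})}(\Frob_{v^\sigma})$ singled out by Hensel's lemma (using $\phi(\Frob_v) \ne \phi^\sigma(\Frob_v)$), and hence already lie in the image of $\cR^{n.ord}$. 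By Theorem \ref{tg R^n}, $\dim_{\bar{\Q}_p} t_{\cR^{n.ord}} \leq 3$, so $\cR^{n.ord}$ is a quotient of a power series ring in at most three variables and $\dim \cR^{n.ord} \leq 3$; comparison with the surjection onto the $3$-dimensional equidimensional ring $\mathbb{T}^{n.ord}$ forces $\dim \cR^{n.ord} = 3$, whence $\cR^{n.ord}$ is regular of dimension three. A surjection from a regular local ring of dimension $n$ (in particular a domain) onto any local ring of dimension $n$ is automatically an isomorphism, since its kernel is a prime of height $0$ in a domain, necessarily zero. This proves (i) and shows a posteriori that $\mathbb{T}^{n.ord}$ is regular.

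For (ii), I would apply the same strategy to the ordinary quotient. The reduction $\mathbb{T}^{n.ord} \twoheadrightarrow \mathbb{T}^{ord}$ corresponds to imposing the unramifiedness of the quotient characters $\psi''_{\mathbb{T}^{n.ord},v}$ and $\psi''_{\mathbb{T}^{n.ord},v^\sigma}$, i.e.\ the ordinariness condition at $p$, so universality yields a surjection $\cR^{ord} \twoheadrightarrow \mathbb{T}^{ord}$. From Theorem \ref{tg R^n}, $\dim t_{\cR^{ord}} \leq 1$, and $\dim \mathbb{T}^{ord} = 1$ as the completion of the localization at a height one prime of the two-dimensional ring $h_F$; so $\cR^{ord}$ is a DVR and the surjection is an isomorphism. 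Then (iii) follows by combining (ii) with Theorem \ref{base-change}: the latter provides $\mathbb{T}^{ord}_\Delta \simeq \cT_+ \simeq \cR_{\tau=1}$, and the canonical projection $\mathbb{T}^{ord} \twoheadrightarrow \mathbb{T}^{ord}_\Delta$ from a DVR onto the DVR $\cR_{\tau=1}$ must be an isomorphism by the same argument. For (iv), (i) directly yields $\cR^{ord}_{\det\rho} \simeq \mathbb{T}^{n.ord}/\gm_{\widehat{\varLambda}^{n.ord}_{(1)}} \mathbb{T}^{n.ord}$ by definition; the middle identification with $\mathbb{T}^{ord}/\gm_\varLambda \mathbb{T}^{ord}$ then follows by matching ordinary and fixed-determinant conditions through (ii), both quotients being realized as the fixed-determinant ordinary quotient of $\cR^{n.ord}$.

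The main obstacle is the sharpness of the dimension count: the tangent space upper bound of Theorem \ref{tg R^n} must be matched precisely by Hida's lower bound on $\dim \mathbb{T}^{n.ord}$ to force equality and regularity, after which everything becomes formal. The delicate verification is that the deformation-theoretic ordinary condition at $p$ truly corresponds to Hida's ordinariness condition at the Hecke-algebra level; this is where one uses crucially $\phi(\Frob_v) \ne \phi^\sigma(\Frob_v)$ and Hensel's lemma to separate the two characters $\psi'_v, \psi''_v$ and identify the ordinary one with the unramified lift produced by Hida theory.
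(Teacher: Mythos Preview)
Your argument is correct and, for (i), (ii), (iv), matches the paper's proof closely: surjectivity of (\ref{R=T}) plus the tangent-space bound of Theorem~\ref{tg R^n} against the known Krull dimension of $\mathbb{T}^{n.ord}$ forces regularity and the isomorphism, and the ordinary and fixed-determinant statements follow by passing to quotients. One small imprecision: in the nearly ordinary setting the quotient characters $\psi''_v,\psi''_{v^\sigma}$ may be ramified, so speaking of ``eigenvalues of $\rho(\Frob_v)$'' is not quite right; the paper is more careful here, identifying $U_{\gp_i}$ with $\psi''_{i,\cR^{n.ord}}$ evaluated at the local Artin symbol of a uniformizer, which is automatically in the image of $\cR^{n.ord}$ since the character itself is $\cR^{n.ord}$-valued by definition of the nearly ordinary condition.

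For (iii) you take a genuinely different route: you chain together (ii), the projection $\mathbb{T}^{ord}\twoheadrightarrow\mathbb{T}^{ord}_\Delta$, and Theorem~\ref{base-change} to reach $\cR_{\tau=1}$, concluding because a surjection of DVRs is an isomorphism. The paper instead builds a direct map $\cR^{ord}\to\cR_{\tau=1}$ by observing that the representation $\rho_{\tau=1}$ of Lemma~\ref{eta} is a $p$-ordinary deformation of $\tilde{\rho}$, hence is classified by $\cR^{ord}$; surjectivity then comes from the fact that $\cR_{\tau=1}\simeq\cR^{ps}_{red}$ is generated over $\varLambda$ by traces. Your route is shorter once Theorem~\ref{base-change} is in hand, but relies on that Hecke-theoretic input; the paper's route is more self-contained on the deformation side and makes the isomorphism explicit at the level of Galois representations.
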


\begin{proof}\

i)We need to show first that the morphism (\ref{R=T}) is surjective. By construction, the Hecke algebra $\mathbb{T}^{n.ord}$ is generated of $\widehat{\varLambda}^{n.ord}_{(1)}$ by the Hecke operators $T_\mathfrak{q}$ with $\mathfrak{q} \nmid p$, $U_{\gp_i}$ with $\gp_i \mid p$. The morphism (\ref{R=T}) sends the trace of $\rho_{\cR^{n.ord}}(\Frob_\mathfrak{q})$ to $T_\mathfrak{q}$ when $\mathfrak{q} \nmid p$. On the other hand, the restriction of $\rho_{\cR^{n.ord}}$ to $G_{F_{\gp_i}}$ for all primes $\gp_i \mid p$ of $F$ is an extension of the character $\psi_{i,\cR^{n.ord}}''$ by the character $\psi_{i,\cR^{n.ord}}'$, where the image of the character $\psi_{i,\cR^{n.ord}}''$ in $\mathbb{T}^{n.ord}$ is just the character $\delta_{\gp_i}$ which sends $[y, F_{\gp_{i}}]$ on the Hecke operator $T(y)$, where $[.,F_{\gp_i}]:\widehat{F_{\gp_i}^{\times}} \rightarrow G^{ab}_{F_{\gp_i}}$ is the Artin symbol. Thus, $U_{\mathfrak{p}_i}=[ \pi_{\gp_i}, F_{\gp_i}]$ in the image of the morphisme (\ref{R=T}) for some uniformizing parameter $\pi_{\gp_i}$ of the local field $F_{\gp_i}$. Hence, the morphism (\ref{R=T}) is surjective and the Krull dimension of $\cR^{n.ord}$ is at least $3$. Finally, the proposition \ref{tg R^n} implies that $\cR^{n.ord}$ is a regular ring dimension $3$, and since the Krull dimension of $\mathbb{T}^{n.ord}$ is $3$, the surjection (\ref{R=T}) is an isomorphism of regular local rings of dimension $3$.

ii) We deduce from (i) and the relation \cite[(20)]{Bet} that $\cR^{ord}\simeq \mathbb{T}^{ord}$. Moreover, the Theorem \ref{tg R^n} implies that the dimension of $\gm_{\cR^{ord}}/\gm_{\cR^{ord}}^2$ is one over $\bar{\Q}_p$ (since the Krull dimension of $\mathbb{T}^{ord}$ is equal to one). Hence $\cR^{ord}$ is a discrete valuation ring.

iii) The deformation $\rho_{\tau=1}$ of $\tilde{\rho}$ (see lemma \ref{eta}) induces by functoriality an homomorphism $\cR^{ord} \rightarrow \cR_{\tau=1}$, and since $\cR_{\tau=1}$ is generated over $\varLambda$ by the trace of $\rho_{\tau=1}$ ($\cR^{ps}_{red}\simeq \cR_{\tau=1}$), this homomorphism $\cR^{ord} \rightarrow \cR_{\tau=1}$ is surjective. Finally, both $\cR^{ord}$ and $\cR_{\tau=1}$ are DVR, then this surjection rises to an isomorphism.

iv) It follows from i), ii) and the relation \cite[\S 6.2]{Bet}.

\end{proof}   

Let $S_1^{\dag}(1, \mathrm{Id} )_{/F}$ denote the space of $p$-adic cuspidal-overconvergent Hilbert modular forms over $F$ of weight $1$, tame level $1$, of trivial Nybentypus character and with coefficients in $\bar{\Q}_p$, and let $S_1^{\dag}(1, \mathrm{Id})_{/F}[[E_{(\phi,\phi^{\sigma})}]]$ be the generalised eigenspace attached to $E_{(\phi,\phi^{\sigma})}$ inside $S_1^{\dag}(1, \mathrm{Id})_{/F}$. By construction of the $p$-ordinary Hecke algebra $h_F$, there exists an isomorphism: 

$$\Hom_{\bar{\Q}_p}(\mathbb{T}^{ord}/\gm_\varLambda \mathbb{T}^{ord},\bar{\Q}_p) \simeq S_1^{\dag}(1, \mathrm{Id})_{/F}[[E_{(\phi,\phi^{\sigma})}]].$$

We have the following consequence of the proposition \ref{summarize} and which summarizes the results of this paper.

\begin{cor} Assume that $\phi$ is unramified everywhere and $\phi(\Frob_v)\ne \phi^{\sigma}(\Frob_v)$, then the following conditions are equivalents:
\begin{enumerate}
\item $\mathbb{T}^{n.ord}$ is etale over $\widehat{\varLambda}^{n.ord}_{(1)}$. 
\item $\mathbb{T}^{ord}$ is etale over $\varLambda$.
\item $\cT_+$ is etale over $\varLambda$.
\item The ramification index $e$ of $\cC$ over $\cW$ is exactly $2$.
\item The $\bar{\Q}_p$-vector space $S_1^{\dag}(1, \mathrm{Id})_{/F}[[E_{(\phi,\phi^{\sigma})}]]$ is of dimension one and it is generated by $E_{(\phi,\phi^{\sigma})}$. \end{enumerate}
\end{cor}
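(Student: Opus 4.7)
The plan is to reduce all five conditions to the single numerical invariant $\dim_{\bar{\Q}_p}(\mathbb{T}^{ord}/\gm_\varLambda \mathbb{T}^{ord})$, using the isomorphisms already established in Proposition \ref{summarize} together with the ramification computation of Lemma \ref{inv}. The main ingredients to string together are: (a) $\cT_+ \simeq \cR_{\tau=1} \simeq \cR^{ord}\simeq \mathbb{T}^{ord}$, all of which are discrete valuation rings that are finite flat over $\varLambda$; (b) the injection $\cT_+ \hookrightarrow \cT$ has ramification index exactly $2$ (Lemma \ref{inv}(v)); (c) the common special fiber identity $\cR^{ord}_{\det\rho}\simeq \mathbb{T}^{ord}/\gm_\varLambda \mathbb{T}^{ord}\simeq \mathbb{T}^{n.ord}/\gm_{\widehat{\varLambda}^{n.ord}_{(1)}}\mathbb{T}^{n.ord}$ (Proposition \ref{summarize}(iv)); and (d) the duality $\Hom_{\bar{\Q}_p}(\mathbb{T}^{ord}/\gm_\varLambda \mathbb{T}^{ord},\bar{\Q}_p)\simeq S_1^{\dag}(1,\mathrm{Id})_{/F}[[E_{(\phi,\phi^{\sigma})}]]$ stated just above the corollary.

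First I would prove (ii) $\Leftrightarrow$ (iii). By Theorem \ref{base-change} and Proposition \ref{summarize}(ii)--(iii), the base-change morphism $\beta_f$ yields an isomorphism of $\varLambda$-algebras $\mathbb{T}^{ord}\simeq \cT_+$, so the two etaleness statements over $\varLambda$ are literally the same. Next I would treat (iii) $\Leftrightarrow$ (iv). Writing $e_1=e(\cT_+/\varLambda)$ and recalling that both residue fields equal $\bar{\Q}_p$, multiplicativity of ramification in the tower $\varLambda\subset \cT_+\subset \cT$ combined with Lemma \ref{inv}(v) gives $e=2\cdot e_1$. Hence $e=2$ iff $e_1=1$ iff $\cT_+/\varLambda$ is unramified, which (since finite flat with trivial residual extension) is equivalent to being etale.

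For (ii) $\Leftrightarrow$ (v), I would use that $\mathbb{T}^{ord}$ is a finite flat DVR over $\varLambda$ with trivial residual extension, so it is etale over $\varLambda$ iff its fiber at $\gm_\varLambda$ has dimension one over $\bar{\Q}_p$. The duality identity cited above then translates this dimension into $\dim_{\bar{\Q}_p} S_1^{\dag}(1,\mathrm{Id})_{/F}[[E_{(\phi,\phi^{\sigma})}]]$; since $E_{(\phi,\phi^{\sigma})}$ always lies in this generalised eigenspace, the dimension being one is equivalent to the statement in (v). Finally, (i) $\Leftrightarrow$ (ii) is obtained through the common fiber identity (c): both $\mathbb{T}^{n.ord}$ (resp. $\mathbb{T}^{ord}$) are finite flat local with residue field $\bar{\Q}_p$ over their Iwasawa bases $\widehat{\varLambda}^{n.ord}_{(1)}$ (resp. $\varLambda$), and Proposition \ref{summarize}(iv) says their closed fibers are canonically isomorphic to $\cR^{ord}_{\det\rho}$; thus one fiber is one-dimensional iff the other is, which gives the equivalence of the two etaleness conditions.

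I do not expect any serious obstacle here: every nontrivial input is already contained in Proposition \ref{summarize}, Lemma \ref{inv}(v), Theorem \ref{base-change}, and the identification of the dual of the special fiber with the generalised eigenspace. The only subtlety to handle carefully is the remark that $\mathbb{T}^{n.ord}$ and $\mathbb{T}^{ord}$ are finite flat over their respective Iwasawa algebras with trivial residual extension, so that \emph{etale $=$ unramified $=$ special fiber is a field of the right dimension}; this is standard given that $\mathbb{T}^{n.ord}$ is regular of dimension $3$ and $\mathbb{T}^{ord}$ is a DVR (both established in Proposition \ref{summarize}).
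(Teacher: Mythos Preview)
Your proposal is correct and follows exactly the route the paper intends: the paper gives no explicit argument beyond the sentence ``consequence of the proposition \ref{summarize}'', and your proof unpacks precisely that, combining Proposition~\ref{summarize}(i)--(iv), Lemma~\ref{inv}(v), and the duality isomorphism displayed immediately before the corollary. The only cosmetic point is that for (ii)$\Leftrightarrow$(iii) you need not invoke Theorem~\ref{base-change} separately, since Proposition~\ref{summarize}(ii)--(iii) together with $\cR_{\tau=1}=\cT_+$ already give $\mathbb{T}^{ord}\simeq\cT_+$ directly.
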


\begin{rem}\

In the case where the hypothesis $\left(\pmb{ \mathrm{G}}\right)$ holds, the equivalence of the above corollary holds. 

\end{rem}

\section{examples where the ramification index $e$ of $\cC$ over $\cW$ at $f$ is $2$}\

Cho, Dimitrov and Ghate provided several examples for Hida families $\mathcal{F}$ containing a
classical RM cuspform and such that the field generated by the coefficients of $\mathcal{F}$ is a quadratic extension of the fraction field of the Iwasawa algerba $\varLambda_\cO$. Thus, we have a several numerical examples for which the ramification index $e$ of $\cC$ over $\cW$ at $f$ is $2$.

\subsection{Examples provided by Dimitrov-Ghate \cite[\S7.3]{D-G}}\

Denote by $\mathbb{T}^{\mathrm{new}}_{N,\bar{\rho}}$ the $N$-New-quotient of $h_{\Q,\gm}$ acting on the space of $\varLambda_\cO$-adic ordinary cuspforms
of tame level $N$ which are $N$-New. Dimitrov and Ghate studied in \cite[\S7.3]{D-G} the Hida families specializing to RM weight one forms, and they have many examples for which the rank of $\mathbb{T}^{\mathrm{new}}_{N,\bar{\rho}}$ over the Iwasawa algebra $\varLambda_\cO$ is two. In this case, if $\mathcal{F}$ denote a $p$-adic Hida family specializing to the classical RM form $f$, then the field generated by the coefficients of $\mathcal{F}$ is obtained by adjoining to $\mathrm{Frac}(\varLambda_\cO)$ a square-root
of an element in $\varLambda_\cO$.

Their method of computation consists in studying specializations in
weights two or more, more precisely, they showed that the $p$-adic completions of the Hecke fields of modular forms $f_k$ for
the first few weights $k$ are all quadratic extensions of $\Q_p$ (see Table $1$ and Table $2$ of \cite[\S7.3]{D-G}).

\subsection{Examples provided by Cho \cite[\S7]{cho}}\

The method of computation of S.Cho in \cite[\S7]{cho} consists in studying the unramifiedness specializations of $h_{\Q,\gm}^{\omega=1}$ of higher weight in the aim to proof that $h_{\Q,\gm}^{\omega=1} \simeq \varLambda_\cO$ in many examples.

Let $H_k$ be the Hecke algebra over $\Q$ for the space of cusp forms of weight $k$, Nybentypus character $\epsilon_F$ and level $N$, $H_k^+$ be the maximal real sub-algebra of $H_k$ and $D_{+}$ be the discriminant of $H_k^+$.

A direct computation shows that the Atkin-Lehner involution acts on $H_k$ as the complexe conjugation. Therefore, when $p \nmid  D_{+}$, the specialization of $h_{\Q,\gm}^{\omega=1}$ at the weight $k$ is unramified over $\cO$, and hence $h_{\Q,\gm}^{\omega=1} \simeq \varLambda_\cO$ by \cite[Proposition 8]{Gouvea}.

Thus, It is sufficient to find examples such that the specialization of $h_{\Q,\gm}^{\omega=1}$ at higher weight $k$ is unramified over $\cO$, and Cho checked this unramifiedness from the discriminant table from \cite[Table $1$]{hida}.

\bibliographystyle{siam}

\end{document}